\documentclass[psamsfonts, 12pt,reqno]{amsart}

\usepackage[backref]{hyperref}
\usepackage{amssymb}
 \numberwithin{equation}{section}
\usepackage{graphicx}
\usepackage{amsmath}
\usepackage{mathrsfs}
\usepackage{latexsym}
\usepackage{mathrsfs}
\usepackage{graphicx}
\usepackage[utf8]{inputenc}
\usepackage[T1]{fontenc}

\usepackage{color}



\theoremstyle{plain}
\newtheorem{theorem}{Theorem}[section]
\newtheorem{proposition}{Proposition}[section]
\newtheorem{lemma}{Lemma}[section]
\newtheorem{corollary}{Corollary}[section]

\newtheorem{definition}{Definition}[section]
\begin{document}

\title[]{Normalized states for a critical Schr\"{o}dinger--Poisson system with Hardy singularity: multiplicity and semiclassical concentration}

\author{Khaled Khachnaoui}

\begin{abstract}
In this paper, we investigate the existence, multiplicity, and
semiclassical concentration of normalized solutions to a critical
Schr\"{o}dinger--Poisson system with a singular Hardy potential in
\(\mathbb{R}^3\). More precisely, we consider
\[
\begin{cases}
-\varepsilon^2\Delta u+
\left(V(x)-\dfrac{\kappa\varepsilon^2}{|x|^2}\right)u -\phi |u|^3u
=\lambda u+\mu |u|^{q-2}u+|u|^4u,
& \text{in } \mathbb{R}^3, \\[1mm]
-\varepsilon^2\Delta\phi=|u|^5, & \text{in } \mathbb{R}^3,
\end{cases}
\]
under the prescribed mass constraint
\[
\int_{\mathbb{R}^3}|u|^2\,dx=a^2\varepsilon^3,
\]
where \(a,\mu>0\), \(q\in(2,10/3)\), \(\varepsilon>0\) is a small
semiclassical parameter, and \(0<\kappa<1/4\). The parameter
\(\lambda\in\mathbb{R}\) appears as a Lagrange multiplier associated
with the mass constraint, while \(V:\mathbb{R}^3\to(0,+\infty)\) is
a continuous electric potential whose minimum set is assumed to be
nonempty and compact. The main difficulty stems from the
simultaneous presence of the inverse-square Hardy singularity, the
mass constraint, the critical local nonlinearity, and the nonlocal
Poisson interaction. By combining the Hardy inequality, constrained
variational methods, suitable truncation arguments, and
concentration-compactness techniques, we first establish the
existence of a normalized ground state for sufficiently small mass
and sufficiently small \(\varepsilon\). We then employ
Ljusternik--Schnirelmann category theory to obtain multiple
normalized solutions whose number is related to the topology of the
minimum set of \(V\). Finally, we show that the corresponding
semiclassical states concentrate near the global minimum set of the
electric potential as \(\varepsilon\to0\).

\end{abstract}

\thanks{\textbf{Mathematics Subject Classification}: 35J20, 35J50, 35J60, 35Q40, 58E05. \
\textbf{Keywords}: Schr\"{o}dinger--Poisson system; normalized
solutions; Hardy singular potential; prescribed mass constraint;
critical growth; semiclassical states; concentration phenomena;
Ljusternik--Schnirelmann category.}

\maketitle

\tableofcontents
\section{Introduction}

Nonlinear Schr\"odinger--Poisson systems arise in several areas of
mathematical physics, including quantum mechanics, semiconductor
theory, plasma physics, and the description of charged particles
interacting with the electrostatic field generated by their own
density \cite{LEH,LEL,LPL}. A general form of these systems is
\[
\begin{cases}
-\varepsilon^2\Delta u+V(x)u-\phi\,g(u)=f(u),
& x\in\mathbb R^3,\\
-\varepsilon^2\Delta\phi=G(u), & x\in\mathbb R^3,
\end{cases}
\]
where $u$ denotes the wave function, $\phi$ is the associated
electrostatic potential, $V$ is an external potential, and
$\varepsilon>0$ is the semiclassical parameter. The coupling between
the Schr\"odinger and Poisson equations gives rise to a nonlocal
interaction, making the associated variational analysis considerably
more involved than that of scalar nonlinear Schr\"odinger equations
\cite{Cazenave}.

The mathematical study of Schr\"odinger--Poisson systems has been
extensively developed since the pioneering work of Benci and
Fortunato \cite{BenciFortunato,BenciFortunato2002}. Subsequently,
numerous authors investigated the existence, multiplicity,
qualitative properties, and semiclassical concentration of
solutions; see, for example,
\cite{AmbrosettiRuiz,Azzollini,AzzolliniDAprize,CeramiVaira,DAprileMugnai,Ruiz,ZhaoZhao}
and the references therein.

In the semiclassical regime, one of the fundamental questions is the
concentration behavior of bound states as $\varepsilon\rightarrow0$.
Variational methods \cite{WMM}, concentration--compactness
principles \cite{Lions1,Lions2,Lions3}, penalization techniques
\cite{delPinoFelmer}, and Lusternik--Schnirelmann theory
\cite{Szulkin} have proved particularly effective for describing
this phenomenon. The presence of a magnetic field adds further
structure; we refer to
\cite{CingolaniSecchi,Diamagnetic,EstebanLions} for semiclassical
states in magnetic Schr\"odinger equations.

More recently, increasing attention has been devoted to normalized
solutions. In this setting, instead of prescribing the frequency,
one fixes the mass of the state by imposing $\int_{\mathbb
R^3}|u|^2\,dx=a^2,$ while the frequency appears as an unknown
Lagrange multiplier. Normalized states are physically relevant since
the $L^2$-norm represents the total mass (or number of particles) of
the system. From the analytical viewpoint, however, the prescribed
mass constraint forces the energy functional to be studied on an
$L^2$-sphere, creating additional compactness difficulties,
especially in the presence of critical nonlinearities. Foundational
results in this direction were obtained by Bartsch and de Valeriola
\cite{BartschValeriola} and by Bartsch and Jeanjean
\cite{BartschJeanjean}.

In the semiclassical regime, the prescribed mass naturally scales
with the concentration parameter. Indeed, if a family of solutions
concentrates around a point $y\in\mathbb R^3$ at the scale
$\varepsilon$, namely $u_\varepsilon(x) =
w\!\left(\frac{x-y}{\varepsilon}\right),$ then $\int_{\mathbb
R^3}|u_\varepsilon|^2\,dx = \varepsilon^3 \int_{\mathbb
R^3}|w|^2\,dx.$ Consequently, the normalization
$\|u_\varepsilon\|_2^2 = a^2\varepsilon^3$ is the natural
semiclassical scaling, ensuring that the limiting profile has
prescribed mass $a$.

Considerable progress has been achieved for critical
Schr\"odinger--Poisson equations. Feng \cite{FX} investigated the
semiclassical problem
\begin{equation}\label{fcv}
\begin{cases}
-\varepsilon^2\Delta u+V(x)u-\phi|u|^3u = f(u)+|u|^4u,
& x\in\mathbb R^3,\\[2mm]
-\varepsilon^2\Delta\phi=|u|^5, & x\in\mathbb R^3,
\end{cases}
\end{equation}
where the perturbation possesses Sobolev-subcritical growth. By
combining a modified concentration--compactness principle with the
mountain-pass theorem, positive ground state solutions were obtained
together with their concentration behavior in the semiclassical
limit.

The normalized counterpart of critical Schr\"odinger--Poisson
systems has also attracted considerable interest. In \cite{VR}, the
authors established existence and multiplicity results for
normalized semiclassical solutions of a Sobolev-critical
Schr\"odinger--Poisson equation involving a critical nonlocal
interaction. By means of Lusternik--Schnirelmann theory, the number
of concentrating solutions was related to the topology of the set
where the electric potential attains its minimum. Related
multiplicity results for normalized solutions via
Lusternik--Schnirelmann category have been obtained in
\cite{ACO,AJC,XCJ}.

Several contributions have also been devoted to normalized
Schr\"odinger--Poisson equations under prescribed mass constraints.
Bartsch and Jeanjean \cite{BartschJeanjean} studied normalized
solutions involving critical nonlinearities, whereas multiplicity
results for related non-magnetic models were obtained by Jeanjean
and Lu \cite{JeanjeanLu} and by Bellazzini, Jeanjean, and Luo
\cite{BellazziniJeanjeanLuo}. The planar case was investigated by
Shu, Wen, and Yang \cite{SMW}. For equations with potentials in
unbounded domains, we refer to Lancelotti and Molle \cite{LSM}, and
for problems involving $(2,q)$-Laplacian operators, to Wei and Song
\cite{WS}. A comprehensive review of normalized solutions for four
classes of nonlinear elliptic equations can be found in \cite{CST}.

A particularly important contribution was obtained by Meng and He
\cite{MY}, who considered the critical normalized system
\begin{equation}\label{10J}
\begin{cases}
-\Delta u-\phi|u|^3u = \lambda u+\mu|u|^{q-2}u+|u|^4u,
& x\in\mathbb R^3,\\[2mm]
-\Delta\phi=|u|^5,
& x\in\mathbb R^3,\\[2mm]
\displaystyle \int_{\mathbb R^3}|u|^2\,dx=a^2.
\end{cases}
\end{equation}
Using genus theory, they established multiplicity results in the
$L^2$-subcritical regime and further investigated the
$L^2$-supercritical case for sufficiently large values of $\mu$.
Further developments on normalized solutions with doubly critical
growth were given in \cite{MengHe2023,MengHe2024}.

Subsequently, He, Meng, and R\u{a}dulescu \cite{HXX} revisited
problem \eqref{10J} with the perturbation $\mu|u|^{q-2}u$, covering
the $L^2$-subcritical, $L^2$-critical, and $L^2$-supercritical
regimes. They obtained existence and nonexistence results and
analyzed the asymptotic behavior of ground states as $\mu\to0^+$.
The fractional analogue of these systems was studied by He, Meng,
and Squassina \cite{HeMengSquassina2024}, while Khachnaoui
\cite{KK,KKa} investigated related nonlocal problems.

More recently, He, Liu, and Meng \cite{HXL} investigated the
generalized normalized problem
\begin{equation}\label{potential_problem}
\begin{cases}
-\Delta u+V(x)u-\phi|u|^3u = \lambda u+\mu|u|^{q-2}u+|u|^4u,
& x\in\mathbb R^3,\\[2mm]
-\Delta\phi=|u|^5,
& x\in\mathbb R^3,\\[2mm]
\displaystyle \int_{\mathbb R^3}|u|^2\,dx=a^2,
\end{cases}
\end{equation}
where the electric potential vanishes at infinity. Their approach,
based on the Pohozaev manifold and constrained variational methods,
covers the $L^2$-subcritical, critical, and supercritical regimes,
and also provides decay estimates for the corresponding ground
states. Related developments may be found in Gao and He \cite{GQH},
Guo and Zhang \cite{GZZ}, Long and Feng \cite{LLF}, Chen and Tang
\cite{CST}, Wei and Song \cite{WS}, Jin, Yang, and Zhou \cite{JPY},
Jin and Zhang \cite{JZZ}, Kang and Tang \cite{KangTang}, and the
references therein.

The inverse-square Hardy potential, which is the subject of the
present work, has been extensively studied in the context of
critical Schr\"odinger equations. Fan, Li, and Tang \cite{FanLiTang}
investigated normalized ground state solutions for critical growth
Schr\"odinger equations with Hardy potential, while Li, Li, and Tang
\cite{LiLiTang} and Li and Zou \cite{LiZou} established ground state
and normalized solutions for related problems. Terracini
\cite{Terracini} studied positive entire solutions to equations with
singular coefficients and critical exponents.

Despite these significant advances, the available literature is
mainly devoted to non-magnetic Schr\"odinger--Poisson equations.
Introducing simultaneously a magnetic field, a prescribed mass
constraint, Sobolev-critical local nonlinearities, critical nonlocal
Poisson interactions, and a semiclassical parameter leads to a
substantially different variational framework. Consequently, the
existing methods cannot be directly applied to the present setting.

Motivated by the above developments, in this paper we investigate
the normalized semiclassical Schr\"odinger--Poisson problem
\begin{equation}\label{P}
\begin{cases}
-\varepsilon^2\Delta u+ \left(
V(x)-\dfrac{\kappa\varepsilon^2}{|x|^2} \right)u -\phi|u|^3u =
\lambda u+\mu|u|^{q-2}u+|u|^4u,
& x\in\mathbb R^3,\\[2mm]
-\varepsilon^2\Delta\phi = |u|^5,
& x\in\mathbb R^3,\\[2mm]
\displaystyle \int_{\mathbb R^3}|u|^2\,dx = a^2\varepsilon^3,
\end{cases}
\tag{$P_\varepsilon$}
\end{equation}
where $a>0$, $\mu>0$, $2<q<\frac{10}{3}$, $0<\kappa<\frac14$, and
$\lambda\in\mathbb R$ is the Lagrange multiplier associated with the
prescribed mass.

The inverse-square Hardy potential plays a distinguished role both
analytically and physically. Indeed, it has the same homogeneity as
the Laplace operator and is governed by the Hardy inequality
\[
\frac14 \int_{\mathbb R^3} \frac{|u|^2}{|x|^2}\,dx \le \int_{\mathbb
R^3} |\nabla u|^2\,dx.
\]
Hence, the restriction $0<\kappa<1/4$ guarantees the coercivity of
the singular quadratic form
\[
\int_{\mathbb R^3} \left( |\nabla u|^2 - \kappa \frac{|u|^2}{|x|^2}
\right)\,dx.
\]
Nevertheless, the Hardy singularity interacts nontrivially with the
mass-preserving scaling and with the critical nonlinearities,
considerably increasing the analytical difficulties.

The analysis of problem \eqref{P} is particularly delicate since it
combines four critical ingredients: a prescribed mass constraint, an
attractive Hardy potential, a Sobolev-critical local nonlinearity,
and a critical nonlocal Poisson interaction. The interplay among
these effects significantly influences the geometry of the
constrained energy functional, the compactness properties, the
Pohozaev identity, and the concentration mechanism in the
semiclassical limit.

The principal novelty of the present work lies in introducing an
attractive Hardy singularity into a normalized semiclassical
Schr\"odinger--Poisson equation involving simultaneous local and
nonlocal critical nonlinearities. To the best of our knowledge, this
combination has not been investigated previously. Our first main
result establishes the existence of a positive normalized ground
state for sufficiently small values of the prescribed mass and the
semiclassical parameter. Our second result proves that the number of
positive normalized solutions is bounded from below by the
Lusternik--Schnirelmann category of the minimum set of the electric
potential and describes their concentration behavior as
$\varepsilon\to0$.

Our approach combines several variational techniques. Hardy's
inequality yields the coercivity of the singular quadratic form,
while suitable Gagliardo--Nirenberg inequalities and refined
estimates for the Poisson term provide the mountain-pass geometry on
the prescribed mass sphere. A suitable compactness threshold is
introduced to exclude bubbling phenomena generated by the
coexistence of the local and nonlocal critical nonlinearities.
Finally, a barycenter map combined with Lusternik--Schnirelmann
category theory transfers the topology of the minimum set of the
potential to the low-energy sublevels of the constrained functional,
leading to the multiplicity and concentration results established in
this paper. Throughout this paper, we assume that $a>0,\qquad
\mu>0,$ $2<q<\frac{10}{3},$  $0<\kappa<\frac14.$ We impose the
following assumptions on the electric potential $V:\mathbb
R^3\to\mathbb R$:

\begin{itemize}
\item[\rm (V1)]
$V\in C^1(\mathbb R^3,\mathbb R)$ and
\[
V_0:=\inf_{x\in\mathbb R^3}V(x)>0.
\]
Moreover, there exists $C_V>0$ such that
\[
|\nabla V(x)\cdot x|\leq C_V \qquad \text{for all }x\in\mathbb R^3.
\]

\item[\rm (V2)]
There exists a bounded open set $\Lambda\subset\mathbb R^3$ such
that
\[
0\notin\overline{\Lambda} \qquad\text{and}\qquad
V_0<\min_{x\in\partial\Lambda}V(x).
\]

\item[\rm (V3)]
The minimum set
\[
\mathcal M := \{x\in\Lambda:V(x)=V_0\}
\]
is nonempty and compact. Moreover,
\[
\operatorname{dist}(\mathcal M,\partial\Lambda)>0.
\]

\end{itemize}

The condition $0\notin\overline{\Lambda}$ ensures that the expected
concentration region remains uniformly separated from the singular
point of the Hardy potential. This geometric assumption allows the
inverse-square singularity to be controlled globally through the
Hardy inequality, while the semiclassical localization takes place
in a regular region of the electric potential.

For every $\delta>0$, we set
\[
\mathcal M_\delta := \left\{ x\in\mathbb R^3:
\operatorname{dist}(x,\mathcal M)\leq\delta \right\}.
\]
Throughout the paper, $\delta>0$ is chosen sufficiently small so
that
\[
\mathcal M_\delta\subset\Lambda \qquad\text{and}\qquad
\operatorname{dist}(\mathcal M_\delta,\{0\})>0.
\]

Our first main result establishes the existence of a positive
normalized ground state.

\begin{theorem}\label{T1}
Assume that {\rm (V1)--(V3)} hold. Then there exist $a_*>0$ and
$\varepsilon_*>0$ such that, for every $a\in(0,a_*),\;
\varepsilon\in(0,\varepsilon_*),$ problem \eqref{P} admits a
normalized solution $u_\varepsilon\in H^1(\mathbb R^3)$ satisfying
\[
\int_{\mathbb R^3}|u_\varepsilon|^2\,dx = a^2\varepsilon^3.
\]
Moreover, $u_\varepsilon$ can be chosen positive in $\mathbb
R^3\setminus\{0\}$ and is a ground state among all normalized
solutions of problem \eqref{P}.
\end{theorem}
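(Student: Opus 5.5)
The plan is to rescale, minimize on a local minimizing set of the constrained functional, and recover compactness from smallness of $a$. Set $u(x)=v(x/\varepsilon)$ and $\phi(x)=\psi(x/\varepsilon)$. The Hardy term is invariant under this scaling, so \eqref{P} becomes
\[
-\Delta v+\Big(V(\varepsilon x)-\frac{\kappa}{|x|^2}\Big)v-\phi_v|v|^3v=\lambda v+\mu|v|^{q-2}v+|v|^4v,\qquad \|v\|_2^2=a^2,
\]
where $\phi_v=|x|^{-1}*|v|^5/(4\pi)$. The associated functional on $S_a=\{v\in H^1:\|v\|_2=a\}$ is
\[
I_\varepsilon(v)=\frac12\int\Big(|\nabla v|^2-\kappa\frac{v^2}{|x|^2}+V(\varepsilon x)v^2\Big)-\frac1{10}\int\phi_v|v|^5-\frac\mu q\|v\|_q^q-\frac16\|v\|_6^6 .
\]
By Hardy's inequality the quadratic gradient part is bounded below by $(1-4\kappa)\|\nabla v\|_2^2$. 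By Hardy--Littlewood--Sobolev and Sobolev, $\int\phi_v|v|^5\le C\|\nabla v\|_2^{10}$. By Gagliardo--Nirenberg, $\|v\|_q^q\le C a^{q(1-\gamma_q)}\|\nabla v\|_2^{q\gamma_q}$ with $q\gamma_q<2$. Combining these gives
\[
I_\varepsilon(v)\ge h(\|\nabla v\|_2),\qquad h(t)=c_1t^2-c_2\mu a^{q(1-\gamma_q)}t^{q\gamma_q}-c_3t^6-c_4t^{10}.
\]
For $a<a_*$ the function $h$ has the local-minimum/maximum shape of Jeanjean--Lu type: there are $0<R_0<R_1$ with $h<0$ near $0$, $h(R_0)=0$ and $h>0$ on $(R_0,R_1)$. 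Hence I would minimize over $A_{R_0}=\{v\in S_a:\|\nabla v\|_2<R_0\}$ (after a truncation $\tau(\|\nabla v\|_2)$ of the critical terms if needed) and set $m_\varepsilon=\inf_{A_{R_0}}I_\varepsilon<0$. Negativity follows by using the mass-preserving dilation $t^{3/2}w(tx)$ with $t\to0$ and the $q$-term, with $V$ bounded on the support.

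Next I would compare with the autonomous limit problem $I_{V_0}$ (with $V\equiv V_0$ and without the Hardy term) and its level $m_{V_0}<0$. Taking a minimizer $w$ of $m_{V_0}$ on the ball, given by the subcritical theory in \cite{HXX}, I would test with $w(\cdot-x_0/\varepsilon)$, where $x_0\in\mathcal M$. Since $0\notin\overline\Lambda$, the Hardy term satisfies $\int\kappa|w(\cdot-x_0/\varepsilon)|^2/|x|^2\to0$, and continuity of $V$ gives $\limsup_{\varepsilon\to0}m_\varepsilon\le m_{V_0}$. A reverse inequality, $\liminf m_\varepsilon\ge m_{V_0}-o(1)$, should follow from $V\ge V_0$ and from the fact that the Hardy term can lower the energy only slightly on functions that are not concentrated near $0$. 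I would handle this in the compactness step rather than prove it directly.

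Compactness is the main obstacle. Take a minimizing sequence, which by Ekeland's principle is a Palais--Smale sequence for $I_\varepsilon|_{S_a}$ staying inside $A_{R_0}$ away from its boundary; it is bounded and has $\lambda_n\to\lambda<0$. Since $\|\nabla v_n\|_2\le R_0$ is small, the critical terms cannot carry a Sobolev/HLS bubble. A bubble would cost energy at least $\tfrac13 S^{3/2}$ (or the corresponding nonlocal constant), which is impossible in a ball whose radius $R_0\to0$ as $a\to0$. Concentration at the singular point $0$ is excluded in the same way, because a Hardy bubble also requires a fixed amount of gradient norm. It remains to rule out vanishing and dichotomy. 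Vanishing is impossible because it would force $\|v_n\|_q\to0$, hence $\liminf I_\varepsilon(v_n)\ge0>m_\varepsilon$. For dichotomy I would use strict subadditivity $m_{a}<m_{b}+m_{\sqrt{a^2-b^2}}$, obtained from the scaling $m_{\theta a}<\theta^2m_a$ for $\theta>1$, together with the potential estimate. Mass escaping to infinity would produce energy $\ge m_{V_\infty}$ with $V_\infty:=\liminf_{|x|\to\infty}V$. For $\varepsilon$ small, the bound $m_\varepsilon\le m_{V_0}+o(1)<m_{V_\infty}$ requires $V_0<V_\infty$, or a penalization of $V$ outside $\Lambda$ in the spirit of del Pino--Felmer via (V2), followed by removing the penalization at the end. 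I expect this strict inequality, uniform in $\varepsilon$, to be the delicate point.

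Once this is done, $v_n\to v$ strongly in $H^1$ and $v$ attains $m_\varepsilon$. Replacing $v$ by $|v|$ keeps the energy and the constraint, so $v\ge0$. The strong maximum principle, applied away from the origin where $\kappa/|x|^2$ is locally bounded, gives $v>0$ on $\mathbb R^3\setminus\{0\}$. Finally, every normalized solution lies on the Pohozaev set, and any solution with energy below $m_\varepsilon$ would have to lie in $A_{R_0}$, since outside it the fiber map forces a higher energy. This yields the ground state property, and undoing the scaling gives $u_\varepsilon$ with $\|u_\varepsilon\|_2^2=a^2\varepsilon^3$.
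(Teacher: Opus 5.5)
Your route differs from the paper's. You look for a local minimizer of the rescaled functional on the small-gradient set $A_{R_0}$. The paper instead runs a mountain pass at the positive level $c_\varepsilon\ge\alpha\varepsilon^3$: it obtains a Palais--Smale--Pohozaev sequence from $(s,u)\mapsto J_\varepsilon(s\star u)$, bounds it via $J_\varepsilon-\frac16P_\varepsilon$, and excludes regular and Hardy bubbles below $\varepsilon^3\min\{c^*_{\rm reg},c^*_{\rm H}\}$.

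\textbf{The gap.} Your argument has a genuine gap exactly where you flag it, and it is not closable from the hypotheses. Attaining $m_\varepsilon$ requires $m_\varepsilon$ to lie strictly below the energy of mass escaping to infinity. But (V1)--(V3) say nothing about $V$ at infinity beyond $V\ge V_0$, since (V2) is only a local well condition. So $\liminf_{|x|\to\infty}V(x)=V_0$ is allowed; then $m_{V_\infty}=m_{V_0}$, and your bound $m_\varepsilon\le m_{V_0}+o(1)$ gives no gap at all.

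The Hardy term does not rescue this in general. A far translate $w(\cdot-z)$ gains only about $\kappa a^2/(2|z|^2)$, while $V(\varepsilon x)-V_0$ may decay like $(\varepsilon|x|)^{-1}$, which is compatible with $|x\cdot\nabla V|\le C_V$. Near $\mathcal M$ the Hardy gain is $O(\varepsilon^2)$, whereas for a merely $C^1$ well the cost $\frac12\int(V(x_0+\varepsilon y)-V_0)w^2\,dy$ can be of larger order. So under (V1)--(V3) alone you have no mechanism forcing the infimum of the unpenalized functional over $A_{R_0}$ to be attained.

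Your fallback is a del Pino--Felmer penalization, or a barycenter restriction to $\Lambda$ using $V_0<\min_{\partial\Lambda}V$. This would plausibly give a solution localized near $\mathcal M$, once you show $\lambda\le V_0-c$ uniformly in $\varepsilon$ to get the decay that removes the penalization. But such a solution no longer realizes $\inf_{A_{R_0}}I_\varepsilon$. Your last paragraph derives the ground-state property from that global infimum, so it no longer applies: critical points with energy between $m_\varepsilon$ and yours are not excluded.

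\textbf{Two smaller points.} First, $m_\varepsilon<0$ is false as written, because $\frac12\int V(\varepsilon x)v^2\ge\frac{V_0}{2}a^2$. The negativity claim, the vanishing argument and the sign of $\lambda$ must all be redone for $V-V_0\ge0$, with $V_0$ absorbed into $\lambda$, so the sign statement becomes $\lambda<V_0$. The negativity test also needs the dilation parameter fixed first and then $\varepsilon\to0$, so that $V(x_0+\varepsilon y/t)-V_0$ is small on the bulk of the mass; boundedness of $V$ on the support is not enough.

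Second, the phrase ``the fiber map forces a higher energy'' hides the virial term $\frac12\int\varepsilon x\cdot\nabla V(\varepsilon x)v^2=O(a^2)$ in the Pohozaev identity. This term is much larger than $R_0^2$, so the autonomous argument does not transfer directly. The step is repairable: use $h\ge0$ on $[R_0,R_1]$, and $I_\varepsilon-\frac12P_\varepsilon$ together with $|x\cdot\nabla V|\le C_V$ for $\|\nabla v\|_2\ge R_1$, where $R_1$ stays bounded below as $a\to0$.

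\textbf{Relation to the paper.} The paper does not supply the missing compactness either. Proposition \ref{prop:exist_critical} takes the gap condition as a hypothesis. Lemma \ref{lem:compactness_threshold} excludes only critical bubbles, not loss of mass by translation. Proposition \ref{prop:ground_state} assumes each fiber $t\mapsto J_\varepsilon(t\star u)$ has a unique critical point. That already fails for constant $V$ when $q<10/3$: the fiber decreases near $t=0^+$, so it has either no critical point or a local minimum before its maximum. In this $L^2$-subcritical regime your local-minimum characterization is the more natural candidate for the least-energy solution. Under (V1)--(V3) alone, though, it still needs a compactness input such as $V_0<\liminf_{|x|\to\infty}V$, which is not available.
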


Our second main result concerns the multiplicity and concentration
of normalized semiclassical states. The number of solutions is
governed by the topology of the minimum set $\mathcal M$.

\begin{theorem}\label{T2}
Assume that {\rm (V1)--(V3)} hold. For every sufficiently small
$\delta>0$, there exist $a_\delta>0$ and $\varepsilon_\delta>0$ such
that, for every $a\in(0,a_\delta),\;
\varepsilon\in(0,\varepsilon_\delta),$ problem \eqref{P} possesses
at least $\operatorname{cat}_{\mathcal M_\delta}(\mathcal M)$
distinct positive normalized solutions. Furthermore, let
$\varepsilon_n\to0$ and let $u_n$ be any sequence of the solutions
obtained above. If $x_n$ is a global maximum point of $u_n$, then
$\operatorname{dist}(x_n,\mathcal M)\to0.$ Consequently, $V(x_n)\to
V_0.$ Moreover, up to a subsequence, there exists a positive
normalized ground state $w_a$ of the autonomous limit problem such
that
\[
u_n(x_n+\varepsilon_n x) \to w_a(x) \qquad \text{strongly in
}H^1(\mathbb R^3).
\]
\end{theorem}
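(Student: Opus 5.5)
The plan follows the standard scheme for Lusternik--Schnirelmann multiplicity of normalized solutions (as in \cite{VR,ACO,AJC}), adapted to the Hardy term. First rescale $x\mapsto\varepsilon x$, writing $v(x)=u(\varepsilon x)$. This turns \eqref{P} into
\[
-\Delta v+\Big(V(\varepsilon x)-\frac{\kappa}{|x|^2}\Big)v-\phi_v|v|^3v=\lambda v+\mu|v|^{q-2}v+|v|^4v,
\qquad \|v\|_2^2=a^2,
\]
with $-\Delta\phi_v=|v|^5$. The associated functional $I_\varepsilon$ is considered on $S_a=\{v\in H^1:\|v\|_2=a\}$.

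Since $q<10/3$, the problem is $L^2$-subcritical. For $a$ small, Hardy's inequality (coercivity from $\kappa<1/4$), the Gagliardo--Nirenberg inequality and the Sobolev bounds $\int\phi_v|v|^5\le S^{-6}\|\nabla v\|_2^{10}$, $\|v\|_6^6\le S^{-3}\|\nabla v\|_2^6$ give a local-minimum geometry. I will fix $R_0$ with $\inf_{\|\nabla v\|_2=R_0}I_\varepsilon>0>\inf_{\|\nabla v\|_2<R_0/2}I_\varepsilon$. Following Jeanjean--Lu, I then truncate the critical terms by a cutoff $\tau(\|\nabla v\|_2)$, so that $I_\varepsilon$ becomes coercive and bounded below on $S_a$, and its negative sublevels lie in $\{\|\nabla v\|_2<R_0\}$.

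Next I take the autonomous limit functional $I_{V_0}$ (with $V\equiv V_0$ and no Hardy term) and its level $m_{V_0}(a)<0$, attained by a positive ground state $w_a$; this is the same problem as \eqref{10J} shifted by $V_0$, cf.\ \cite{MY,HXX}. The Hardy term is handled differently by location. Near $\mathcal M_\delta$ it is negligible after scaling, since $0\notin\overline\Lambda$ forces $\int\kappa|x|^{-2}|\psi(\varepsilon x-y/\varepsilon)|^2\to0$ for concentration points away from $0$. Globally it only lowers energy, but it is controlled by $\kappa<1/4$.

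The first key ingredient is compactness. Because negative levels lie strictly below $0$, while any bubble of the critical terms costs positive energy of order $S^{3/2}$-type thresholds, the truncated functional satisfies the $(PS)_c$ condition on $S_a$ for
\[
c<m_{V_0}(a)+\eta
\]
with $\eta>0$ small and $\varepsilon$ small. The proof uses Lions' lemma together with the strict subadditivity $m(a)<m(b)+m(\sqrt{a^2-b^2})$ to exclude dichotomy, and the fact that $V(\varepsilon x)\ge V_0$ with $V_\infty$-type comparison on $\partial\Lambda$ to exclude vanishing to infinity. I expect this to be the main obstacle. Two issues arise. The Hardy term is not translation invariant, so mass escaping toward the origin must be ruled out: a profile concentrating at the origin at scale $\varepsilon$ should see $V(0)>V_0$ plus a bounded Hardy gain, and I would compare energies using a localized Hardy-type estimate. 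In addition, (V2) only gives $V>V_0$ on $\partial\Lambda$, not at infinity, so I would use a del Pino--Felmer-type penalization outside $\Lambda$ if necessary. Convergence of the Lagrange multipliers with $\lambda<0$ then yields strong convergence in $L^2$.

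The multiplicity step uses barycenters. I define $\Phi_\varepsilon(y)=\eta(|\varepsilon x-y|)w_a(x-y/\varepsilon)$ for $y\in\mathcal M$, with $\eta$ a cutoff, normalized to lie in $S_a$. A direct computation gives $I_\varepsilon(\Phi_\varepsilon(y))\to m_{V_0}(a)$ uniformly on $\mathcal M$. Let $\beta_\varepsilon(v)=\int\chi(\varepsilon x)|v|^2/a^2$, with $\chi$ the identity truncated outside a ball. By contradiction, using the compactness analysis above, every $v$ in the sublevel $\widetilde S=\{I_\varepsilon\le m_{V_0}(a)+h(\varepsilon)\}$, where $h(\varepsilon)\to0$, satisfies $\beta_\varepsilon(v)\in\mathcal M_\delta$. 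Moreover $\beta_\varepsilon\circ\Phi_\varepsilon$ is homotopic to the inclusion $\mathcal M\hookrightarrow\mathcal M_\delta$. Hence $\operatorname{cat}(\widetilde S)\ge\operatorname{cat}_{\mathcal M_\delta}(\mathcal M)$, and LS theory on the $C^1$ manifold $S_a$, combined with $(PS)$, gives that many critical points. They lie below the truncation threshold, so they solve the untruncated problem.

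Positivity is obtained by replacing $v$ with $|v|$ in the minimizing and deformation arguments, or by a strong maximum principle away from $0$. For concentration, let $x_n$ be a maximum point of $u_n$. Then $v_n(\cdot+\tilde y_n)$ converges strongly in $H^1$ to a ground state $w$ of the autonomous problem with potential $V(y_0)$, where $\varepsilon_n\tilde y_n\to y_0$. The energy bound forces $m_{V(y_0)}(a)\le m_{V_0}(a)$, so $y_0\in\mathcal M$. Finally, $L^\infty$ decay from Moser iteration, available away from the origin since $y_0\neq0$, shows that the maxima stay within bounded distance of $\tilde y_n$. This gives $\operatorname{dist}(x_n,\mathcal M)\to0$ and $u_n(x_n+\varepsilon_n x)\to w_a$.
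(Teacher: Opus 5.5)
Your route is different from the paper's. You rescale, truncate the critical terms in the manner of Jeanjean--Lu, and apply Lusternik--Schnirelmann theory to a sublevel of the truncated functional on the whole sphere $S_a$. The paper instead applies it to $J_\varepsilon$ restricted to the low-energy part $\mathcal A_\varepsilon$ of the Pohozaev set $\mathcal P_{a,\varepsilon}$, used as a natural constraint, with compactness taken from Propositions \ref{prop:compact_localization} and \ref{prop:PS_low_level}. Your framework is the more standard one in this $L^2$-subcritical, small-mass regime, and it is cleaner: no natural-constraint property of $\mathcal P_{a,\varepsilon}$ is needed, and $\Phi_\varepsilon(\mathcal M)$ lies directly in your sublevel. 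This holds provided you work with $I_\varepsilon-\frac12V_0a^2$, since the autonomous level itself is $m_{V_0}(a)=\frac12V_0a^2+O(a^{\gamma})>0$ for small $a$, where $\gamma=\frac{2(6-q)}{10-3q}>2$.

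The genuine gap is the localization step, namely $\beta_\varepsilon(\widetilde S)\subset\mathcal M_\delta$ together with $(PS)_c$ for $c$ near $m_{V_0}(a)$. You flag this as the main obstacle but do not resolve it. Hypotheses (V1)--(V3) are purely local: they give neither $V(0)>V_0$, nor $V>V_0$ on $\mathbb R^3\setminus\Lambda$, nor $\liminf_{|x|\to\infty}V>V_0$.
\begin{itemize}
\item Your treatment of the origin assumes $V(0)>V_0$. Under that assumption the comparison does work for small $a$, because the Hardy gain is $O(a^\gamma)=o(a^2)$ while the potential costs $\frac12(V(0)-V_0)a^2$.
\item If instead $V(0)=V_0$, which is allowed, then $w_a$ cut off and centred at the origin has rescaled energy tending to $m_{V_0}(a)-\frac{\kappa}{2}\int_{\mathbb R^3}|x|^{-2}w_a^2\,dx<m_{V_0}(a)$. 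It lies in $\widetilde S$ while its barycenter tends to $0\notin\mathcal M_\delta$, so the category estimate collapses.
\item Similarly, if $V(x)\to V_0$ as $|x|\to\infty$, translates of $w_a$ escaping to infinity form a Palais--Smale sequence at level $m_{V_0}(a)$ with no convergent subsequence.
\end{itemize}
So the penalization is not an optional add-on; it is exactly the step that produces localization. It must keep all states living outside $\Lambda$, including those attracted by the Hardy well at $0$, above $m_{V_0}(a)+\eta$. With the usual cut-off $\frac{V_0}{k}s$, the crude lower bound there is only $\frac12(1-\frac1k)V_0a^2$, which forces $k\gtrsim a^{2-\gamma}$. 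You must also show that the penalized solutions, including the nonlocal term $\phi_v|v|^3v$, are small enough outside $\Lambda$ to solve \eqref{P}. None of this is in the proposal. It cannot be borrowed from the paper either: Proposition \ref{prop:compact_localization} passes from $V(y_n)\to V_0$ directly to $\operatorname{dist}(y_n,\mathcal M)\to0$. Under the global condition $\inf_{\mathbb R^3\setminus\Lambda}V>V_0$, your argument would go through for small $a$.

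Two further steps need repair.
\begin{itemize}
\item \textbf{Positivity.} Replacing $v$ by $|v|$ does not pass through the deformation argument. Lusternik--Schnirelmann critical points are not minimizers, and the deformation need not preserve the nonnegative cone. Instead, put $v^+$ in all nonlinear terms and test the equation with $v^-$. Then use $\lambda<V_0$, which follows from $I_\varepsilon(v)-\frac12V_0a^2<0$ and the multiplier identity, and apply the strong maximum principle in $\mathbb R^3\setminus\{0\}$.
\item \textbf{Maxima.} Every positive solution satisfies $u(x)\geq c\,|x|^{-\alpha_\kappa}$ near $0$, with $c>0$ and $\alpha_\kappa=\frac12-\sqrt{\frac14-\kappa}>0$. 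This follows by comparison with $|x|^{-\alpha_\kappa}$, which solves $-\Delta w-\kappa|x|^{-2}w=0$ in $\mathbb R^3\setminus\{0\}$. Hence $u_n$ has no global maximum point. The uniform decay you extract from Moser iteration also fails near the origin, which in your translated frame sits at $-\tilde y_n$. What your final step can prove is the concentration statement for maxima taken over $\mathbb R^3\setminus B_r(0)$, or over $\overline\Lambda$. The paper's Lemma \ref{lem:concentration_maxima} has the same defect.
\end{itemize}
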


\section{Analytical Tools and Functional Setting}
\label{sec:functional}

In this section, we introduce the variational framework associated
with problem \eqref{P}. We also record the scaling properties and
the Pohozaev functional which will be used later in the compactness
analysis. Particular attention is paid to the Hardy singularity and
to the critical Poisson interaction.

\subsection{The Hardy inequality and the energy space}

We first recall the Hardy inequality in dimension three:
\begin{equation}\label{Hardy}
\frac14\int_{\mathbb R^3}\frac{|u|^2}{|x|^2}\,dx \leq \int_{\mathbb
R^3}|\nabla u|^2\,dx, \qquad u\in H^1(\mathbb R^3).
\end{equation}
Since $0<\kappa<\frac14,$ we deduce
\begin{equation}\label{Hardy-coercivity}
\int_{\mathbb R^3} \left( |\nabla u|^2-\kappa\frac{|u|^2}{|x|^2}
\right)\,dx \geq (1-4\kappa)\int_{\mathbb R^3}|\nabla u|^2\,dx .
\end{equation}
Thus the singular quadratic form associated with
$-\Delta-\frac{\kappa}{|x|^2}$ is positive on $H^1(\mathbb R^3)$.
For every $\varepsilon>0$, we set
\[
H_\varepsilon := \left\{ u\in H^1(\mathbb R^3): \int_{\mathbb
R^3}V(x)|u|^2\,dx<+\infty \right\}.
\]
We endow $H_\varepsilon$ with the norm
\begin{equation}\label{norm-eps}
\|u\|_\varepsilon^2 := \int_{\mathbb R^3} \left(
\varepsilon^2|\nabla u|^2 -\kappa\varepsilon^2\frac{|u|^2}{|x|^2}
+V(x)|u|^2 \right)\,dx .
\end{equation}
By \eqref{Hardy-coercivity} and assumption {\rm (V1)}, we have
\begin{equation}\label{norm-coercivity}
\|u\|_\varepsilon^2 \geq (1-4\kappa)\varepsilon^2\|\nabla u\|_2^2
+V_0\|u\|_2^2 .
\end{equation}
Consequently, $\|\cdot\|_\varepsilon$ is well defined and controls
the natural semiclassical $H^1$-norm. In particular, for every fixed
$\varepsilon>0$,
\[
H_\varepsilon\hookrightarrow L^r(\mathbb R^3)
\quad\text{continuously for }2\leq r\leq6,
\]
and
\[
H_\varepsilon\hookrightarrow L^r_{\rm loc}(\mathbb R^3)
\quad\text{compactly for }1\leq r<6.
\]

The normalized solutions are searched on the mass sphere
\begin{equation}\label{Sa-eps}
S_{a,\varepsilon} := \left\{ u\in H_\varepsilon: \int_{\mathbb
R^3}|u|^2\,dx=a^2\varepsilon^3 \right\}.
\end{equation}
It is a smooth codimension-one manifold of $H_\varepsilon$. Its
tangent space at $u\in S_{a,\varepsilon}$ is
\begin{equation}\label{tangent-space}
T_uS_{a,\varepsilon} = \left\{ v\in H_\varepsilon: \int_{\mathbb
R^3}uv\,dx=0 \right\}.
\end{equation}
The factor $\varepsilon^3$ is adapted to the semiclassical scaling
in dimension three. Indeed, if $v(x)=u(\varepsilon x),$ then
$\int_{\mathbb R^3}|v|^2\,dx=a^2 .$

The next step consists in reducing the coupled
Schr\"odinger--Poisson system to a single equation. This is achieved
by expressing the electrostatic potential as the unique solution of
the Poisson equation associated with a given function \(u\).

For every $u\in H_\varepsilon$, we consider
\begin{equation}\label{Poisson}
-\varepsilon^2\Delta\phi=|u|^5 \qquad\text{in }\mathbb R^3.
\end{equation}
Since $u\in H^1(\mathbb R^3)\hookrightarrow L^6(\mathbb R^3)$, we
have $|u|^5\in L^{6/5}(\mathbb R^3)$. Hence there exists a unique
$\phi_u\in D^{1,2}(\mathbb R^3)$ solving \eqref{Poisson}. Moreover,
\[
\phi_u(x) = \frac1{4\pi\varepsilon^2} \int_{\mathbb R^3}
\frac{|u(y)|^5}{|x-y|}\,dy,
\]
and therefore $\phi_u\geq0$. Testing \eqref{Poisson} with $\phi_u$,
we obtain
\begin{equation}\label{Poisson-identity}
\varepsilon^2\int_{\mathbb R^3}|\nabla\phi_u|^2\,dx = \int_{\mathbb
R^3}\phi_u|u|^5\,dx .
\end{equation}
Furthermore,
\begin{equation}\label{Poisson-double-integral}
\int_{\mathbb R^3}\phi_u|u|^5\,dx = \frac1{4\pi\varepsilon^2}
\iint_{\mathbb R^3\times\mathbb R^3}
\frac{|u(x)|^5|u(y)|^5}{|x-y|}\,dx\,dy .
\end{equation}
By the Hardy--Littlewood--Sobolev inequality,
\begin{equation}\label{Poisson-estimate}
\int_{\mathbb R^3}\phi_u|u|^5\,dx \leq
C\varepsilon^{-2}\|u\|_6^{10}.
\end{equation}
Also,
\begin{equation}\label{phi-homogeneity}
\phi_{tu}=t^5\phi_u, \qquad t\geq0.
\end{equation}

This functional provides the variational framework for the analysis
of normalized solutions under the prescribed mass constraint. The
reduced energy functional is
\begin{align}
J_\varepsilon(u) ={}& \frac12 \int_{\mathbb R^3} \left(
\varepsilon^2|\nabla u|^2 -\kappa\varepsilon^2\frac{|u|^2}{|x|^2}
+V(x)|u|^2 \right)\,dx
\nonumber\\
&- \frac1{10} \int_{\mathbb R^3}\phi_u|u|^5\,dx - \frac{\mu}{q}
\int_{\mathbb R^3}|u|^q\,dx - \frac16 \int_{\mathbb R^3}|u|^6\,dx .
\label{energy-functional}
\end{align}
Equivalently,
\begin{align}
J_\varepsilon(u) ={}& \frac12\|u\|_\varepsilon^2 -
\frac1{40\pi\varepsilon^2} \iint_{\mathbb R^3\times\mathbb R^3}
\frac{|u(x)|^5|u(y)|^5}{|x-y|}\,dx\,dy
\nonumber\\
&- \frac{\mu}{q}\|u\|_q^q - \frac16\|u\|_6^6 . \label{energy-double}
\end{align}
By standard arguments based on the Hardy-Littlewood-Sobolev
inequality, $J_\varepsilon\in C^1(H_\varepsilon,\mathbb R).$ For
every $u,v\in H_\varepsilon$,
\begin{align}
\langle J_\varepsilon'(u),v\rangle ={}& \int_{\mathbb R^3} \left(
\varepsilon^2\nabla u\cdot\nabla v
-\kappa\varepsilon^2\frac{uv}{|x|^2} +V(x)uv \right)\,dx
\nonumber\\
&- \int_{\mathbb R^3}\phi_u|u|^3uv\,dx - \mu\int_{\mathbb
R^3}|u|^{q-2}uv\,dx - \int_{\mathbb R^3}|u|^4uv\,dx .
\label{energy-derivative}
\end{align}
A function $u\in S_{a,\varepsilon}$ is a constrained critical point
of $J_\varepsilon$ if and only if there exists $\lambda\in\mathbb R$
such that
\begin{equation}\label{Lagrange}
J_\varepsilon'(u)=\lambda u \qquad\text{in }H_\varepsilon^* .
\end{equation}
Thus constrained critical points of
$J_\varepsilon|_{S_{a,\varepsilon}}$ are precisely normalized weak
solutions of \eqref{P}. Testing \eqref{Lagrange} with $u$, we get
\begin{equation}\label{lambda-identity}
\lambda a^2\varepsilon^3 = \|u\|_\varepsilon^2 - \int_{\mathbb
R^3}\phi_u|u|^5\,dx - \mu\int_{\mathbb R^3}|u|^q\,dx - \int_{\mathbb
R^3}|u|^6\,dx .
\end{equation}

A key ingredient in our analysis is the mass-preserving scaling,
which preserves the $L^2$-constraint and induces the corresponding
Pohozaev functional. These tools will play a central role in the
construction of normalized critical points. For $u\in
S_{a,\varepsilon}$ and $t>0$, define
\begin{equation}\label{mass-scaling}
(t\star u)(x):=t^{3/2}u(tx).
\end{equation}
Then $\|t\star u\|_2=\|u\|_2,$ and consequently $t\star u\in
S_{a,\varepsilon}.$ The scaling identities are
\begin{align}
\|\nabla(t\star u)\|_2^2 &= t^2\|\nabla u\|_2^2,
\label{scaling-gradient}\\
\int_{\mathbb R^3}\frac{|t\star u|^2}{|x|^2}\,dx &= t^2\int_{\mathbb
R^3}\frac{|u|^2}{|x|^2}\,dx,
\label{scaling-Hardy}\\
\|t\star u\|_r^r &= t^{\frac{3(r-2)}2}\|u\|_r^r, \qquad 2\leq
r\leq6. \label{scaling-Lr}
\end{align}
The Poisson term satisfies the corrected scaling law
\begin{equation}\label{scaling-Poisson}
\int_{\mathbb R^3}\phi_{t\star u}|t\star u|^5\,dx = t^{10}
\int_{\mathbb R^3}\phi_u|u|^5\,dx .
\end{equation}
Since $V$ is not constant, the potential term is not homogeneous.
Indeed,
\[
\int_{\mathbb R^3}V(x)|t\star u|^2\,dx = \int_{\mathbb
R^3}V\left(\frac{x}{t}\right)|u(x)|^2\,dx .
\]
Therefore, $\left. \frac{d}{dt} \right|_{t=1} \frac12 \int_{\mathbb
R^3}V(x)|t\star u|^2\,dx = -\frac12 \int_{\mathbb R^3}\nabla
V(x)\cdot x\,|u|^2\,dx .$

We define the Pohozaev functional associated with the
mass-preserving scaling by
\begin{align}
P_\varepsilon(u) :={}& \varepsilon^2 \int_{\mathbb R^3} \left(
|\nabla u|^2 -\kappa\frac{|u|^2}{|x|^2} \right)\,dx - \frac12
\int_{\mathbb R^3}\nabla V(x)\cdot x\,|u|^2\,dx
\nonumber\\
&- \int_{\mathbb R^3}\phi_u|u|^5\,dx - \frac{3\mu(q-2)}{2q}
\int_{\mathbb R^3}|u|^q\,dx - \int_{\mathbb R^3}|u|^6\,dx .
\label{Pohozaev-functional}
\end{align}
Equivalently,
\[
P_\varepsilon(u) = \left. \frac{d}{dt} J_\varepsilon(t\star u)
\right|_{t=1}.
\]
Hence every constrained critical point $u\in S_{a,\varepsilon}$ of
$J_\varepsilon$ satisfies
\begin{equation}\label{Pohozaev-identity}
P_\varepsilon(u)=0.
\end{equation}

\subsection{The semiclassical rescaling}

Let $v(x)=u(\varepsilon x),$  $\psi_v(x)=\phi_u(\varepsilon x).$
Then $\int_{\mathbb R^3}|v|^2\,dx=a^2,$ and the system becomes
\begin{equation}\label{rescaled-problem}
\begin{cases}
-\Delta v+ \left( V(\varepsilon x)-\dfrac{\kappa}{|x|^2} \right)v
-\psi_v|v|^3v = \lambda v+\mu|v|^{q-2}v+|v|^4v,
\\[2mm]
-\Delta\psi_v=|v|^5,
\\[2mm]
\displaystyle\int_{\mathbb R^3}|v|^2\,dx=a^2.
\end{cases}
\end{equation}
Thus the Hardy singularity remains visible under the global
semiclassical rescaling. However, the local concentration profile
near points separated from the origin is different. If
$x_\varepsilon\to x_0\in\mathcal M$ and
$w_\varepsilon(y)=u_\varepsilon(x_\varepsilon+\varepsilon y),$ then
the Hardy coefficient becomes
$\frac{\kappa\varepsilon^2}{|x_\varepsilon+\varepsilon y|^2}.$ Since
the concentration region is assumed to satisfy
$0\notin\overline{\Lambda},$ $\mathcal M\subset\Lambda,$ we have
$\operatorname{dist}(\mathcal M,\{0\})>0$. Hence, whenever
$x_\varepsilon\to x_0\in\mathcal M$,
\begin{equation}\label{Hardy-vanishing}
\frac{\kappa\varepsilon^2}{|x_\varepsilon+\varepsilon y|^2}
\longrightarrow0
\end{equation}
locally uniformly with respect to $y$.

This shows that the Hardy term disappears in the leading-order
profile of solutions concentrating near $\mathcal M$. Nevertheless,
in the compactness analysis one must also exclude possible bubbling
near the origin. If concentration occurs at the scale
$x_\varepsilon=O(\varepsilon)$, then the Hardy potential does not
vanish and the limiting profile contains the singular term $-\kappa
|x|^{-2}$. This second possibility will be taken into account in the
definition of the critical compactness threshold.

\subsection{The autonomous limit problem away from the singularity}

For concentration points lying in $\mathcal M$, the limiting
autonomous normalized problem is
\begin{equation}\label{P0}
\begin{cases}
-\Delta w+V_0w-\phi_w|w|^3w = \lambda w+\mu|w|^{q-2}w+|w|^4w, &
x\in\mathbb R^3,
\\[1mm]
-\Delta\phi_w=|w|^5, & x\in\mathbb R^3,
\\[1mm]
\displaystyle\int_{\mathbb R^3}|w|^2\,dx=a^2.
\end{cases}
\tag{$P_0$}
\end{equation}
We set
\begin{equation}\label{Sa}
S_a := \left\{ w\in H^1(\mathbb R^3): \|w\|_2=a \right\}.
\end{equation}
The corresponding autonomous energy is
\begin{align}
J_0(w) ={}& \frac12 \int_{\mathbb R^3} \left( |\nabla w|^2+V_0|w|^2
\right)\,dx - \frac1{10} \int_{\mathbb R^3}\phi_w|w|^5\,dx
\nonumber\\
&- \frac{\mu}{q} \int_{\mathbb R^3}|w|^q\,dx - \frac16 \int_{\mathbb
R^3}|w|^6\,dx . \label{limit-functional}
\end{align}

The associated Pohozaev functional is
\begin{equation}\label{limit-Pohozaev}
P_0(w) := \int_{\mathbb R^3}|\nabla w|^2\,dx - \int_{\mathbb
R^3}\phi_w|w|^5\,dx - \frac{3\mu(q-2)}{2q} \int_{\mathbb
R^3}|w|^q\,dx - \int_{\mathbb R^3}|w|^6\,dx .
\end{equation}
Every normalized critical point of $J_0$ on $S_a$ satisfies
\[
P_0(w)=0.
\]
We therefore introduce the Pohozaev manifold
\begin{equation}\label{Pohozaev-manifold-limit}
\mathcal P_a^0 := \left\{ w\in S_a: P_0(w)=0 \right\}.
\end{equation}
The autonomous ground-state level is defined variationally by
\begin{equation}\label{ground-level-corrected}
m_0(a) := \inf_{w\in\mathcal P_a^0}J_0(w).
\end{equation}
This definition is non-circular. In the sequel, one proves that
$m_0(a)$ is achieved by some positive function $w_a\in\mathcal
P_a^0$ and that $w_a$ is a normalized ground state of \eqref{P0}.
Once this is established, we may write
\[
E_0(a):=m_0(a)=J_0(w_a).
\]

\subsection{Regular and singular critical thresholds}

The compactness analysis must take into account two distinct
bubbling mechanisms. A concentrating sequence may either remain
asymptotically far from the singular point at the semiclassical
scale, in which case the Hardy potential disappears in the limit, or
concentrate at distance of order $\varepsilon$ from the origin, in
which case the Hardy singularity survives.

We first consider the regular critical limiting problem
\begin{equation}
\label{Pcrit-reg}
\begin{cases}
-\Delta U-\Phi_U|U|^3U=|U|^4U
&\text{in }\mathbb R^3,\\
-\Delta\Phi_U=|U|^5 &\text{in }\mathbb R^3.
\end{cases}
\end{equation}
Its associated energy functional is
\[
I_{\rm reg}(U) = \frac12\int_{\mathbb R^3}|\nabla U|^2\,dx
-\frac1{10}\int_{\mathbb R^3}\Phi_U|U|^5\,dx -\frac16\int_{\mathbb
R^3}|U|^6\,dx.
\]
We define the regular critical threshold by
\[
c_{\rm reg}^* := \inf\left\{ I_{\rm reg}(U): U\not\equiv0,\; I_{\rm
reg}'(U)=0 \right\}.
\]

The second possible loss of compactness occurs near the singular
point. The corresponding limiting problem is
\begin{equation}
\label{Pcrit-Hardy}
\begin{cases}
-\Delta U-\displaystyle\frac{\kappa}{|x|^2}U -\Phi_U|U|^3U =|U|^4U
&\text{in }\mathbb R^3,\\
-\Delta\Phi_U=|U|^5 &\text{in }\mathbb R^3.
\end{cases}
\end{equation}
The associated energy functional is
\[
I_{\rm H}(U) = \frac12 \int_{\mathbb R^3} \left( |\nabla U|^2
-\kappa\frac{|U|^2}{|x|^2} \right)\,dx -\frac1{10} \int_{\mathbb
R^3}\Phi_U|U|^5\,dx -\frac16 \int_{\mathbb R^3}|U|^6\,dx.
\]
We define the singular Hardy critical threshold by
\[
c_{\rm H}^* := \inf\left\{ I_{\rm H}(U): U\not\equiv0,\; I_{\rm
H}'(U)=0 \right\}.
\]

Accordingly, throughout the remainder of the paper, we set
\begin{equation}
\label{def:mixed-critical-threshold} c^* := \min\{c_{\rm
reg}^*,c_{\rm H}^*\}.
\end{equation}
At the original semiclassical scale, the corresponding compactness
threshold is
\begin{equation}
\label{def:semiclassical-critical-threshold} c_\varepsilon^* :=
\varepsilon^3c^*.
\end{equation}

Thus, any Palais--Smale--Pohozaev sequence with energy strictly
below $c_\varepsilon^*$ cannot generate either a regular critical
bubble away from the singularity or a Hardy critical bubble near the
origin.

\section{Proof of the Main Results}
\subsection{Semiclassical Normalized Ground States}

We now collect the main variational ingredients needed to prove
Theorem \ref{T1}. Throughout this subsection, we assume that {\rm
(V1)--(V3)} hold, that $0<\kappa<\frac14, \qquad 2<q<\frac{10}{3}.$

We begin with the following coercivity estimate, which ensures that
the quadratic part of the functional remains positive despite the
presence of the singular Hardy potential.
\begin{lemma}\label{lem:hardy_coercive}
There exists a constant $C_\kappa>0$ such
that, for every $u\in H_\varepsilon$,
\[
\|u\|_\varepsilon^2 \geq C_\kappa \left( \varepsilon^2\|\nabla
u\|_2^2+\|u\|_2^2 \right).
\]
More precisely,
\[
\|u\|_\varepsilon^2 \geq (1-4\kappa)\varepsilon^2\|\nabla u\|_2^2 +
V_0\|u\|_2^2.
\]
\end{lemma}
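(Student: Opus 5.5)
The plan is to split the norm \eqref{norm-eps} into its singular quadratic part and its potential part, and to bound each from below separately. The singular part is handled by the Hardy inequality \eqref{Hardy}, and the potential part by the lower bound in {\rm (V1)}.

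\textbf{Step 1: the singular part.} Fix $u\in H_\varepsilon$. Since $u\in H^1(\mathbb R^3)$, \eqref{Hardy} shows that $\int|u|^2|x|^{-2}\,dx\le 4\|\nabla u\|_2^2<\infty$, so every term in \eqref{norm-eps} is finite. Multiplying \eqref{Hardy} by $\kappa\varepsilon^2>0$ gives
\[
\kappa\varepsilon^2\int_{\mathbb R^3}\frac{|u|^2}{|x|^2}\,dx\le 4\kappa\varepsilon^2\|\nabla u\|_2^2 .
\]
Hence
\[
\varepsilon^2\int_{\mathbb R^3}\Big(|\nabla u|^2-\kappa\frac{|u|^2}{|x|^2}\Big)dx\ge(1-4\kappa)\varepsilon^2\|\nabla u\|_2^2 .
\]
This is \eqref{Hardy-coercivity} multiplied by $\varepsilon^2$.

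\textbf{Step 2: the potential part.} By {\rm (V1)} we have $V(x)\ge V_0>0$ pointwise, so $\int V|u|^2\,dx\ge V_0\|u\|_2^2$. Adding this to Step 1 gives the refined inequality
\[
\|u\|_\varepsilon^2\ge(1-4\kappa)\varepsilon^2\|\nabla u\|_2^2+V_0\|u\|_2^2 .
\]

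\textbf{Step 3: the first form.} Set $C_\kappa:=\min\{1-4\kappa,V_0\}$. This constant is positive because $0<\kappa<\frac14$ and $V_0>0$. It depends only on $\kappa$ and $V$, and in particular not on $\varepsilon$ or $u$. The first inequality of the lemma follows at once from the refined one.

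There is no genuine obstacle. The only point to check is that the Hardy inequality applies to every element of $H_\varepsilon$, and it does, because $H_\varepsilon\subset H^1(\mathbb R^3)$. This also makes $\|\cdot\|_\varepsilon^2$ a nonnegative quantity that vanishes only when $u=0$.
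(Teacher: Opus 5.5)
Your proof is correct and follows the same route as the paper's proof. You bound the potential term below using $V\ge V_0$ from (V1) and the singular quadratic part using the Hardy inequality, then take $C_\kappa=\min\{1-4\kappa,V_0\}$, exactly as the paper does.
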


\begin{proof}
By the definition of the norm $\|\cdot\|_\varepsilon$, we have
\[
\|u\|_\varepsilon^2 = \int_{\mathbb R^3} \left( \varepsilon^2|\nabla
u|^2 -\kappa\varepsilon^2\frac{|u|^2}{|x|^2} +V(x)|u|^2 \right)\,dx.
\]
Using assumption {\rm (V1)}, namely
\[
V(x)\geq V_0>0 \qquad \text{for all }x\in\mathbb R^3,
\]
we obtain
\[
\|u\|_\varepsilon^2 \geq \varepsilon^2 \int_{\mathbb R^3} \left(
|\nabla u|^2-\kappa\frac{|u|^2}{|x|^2} \right)\,dx +
V_0\int_{\mathbb R^3}|u|^2\,dx.
\]
Now, by the Hardy inequality in $\mathbb R^3$,
\[
\int_{\mathbb R^3}\frac{|u|^2}{|x|^2}\,dx \leq 4\int_{\mathbb
R^3}|\nabla u|^2\,dx, \qquad u\in H^1(\mathbb R^3).
\]
Therefore,
\[
\int_{\mathbb R^3} \left( |\nabla u|^2-\kappa\frac{|u|^2}{|x|^2}
\right)\,dx \geq (1-4\kappa) \int_{\mathbb R^3}|\nabla u|^2\,dx.
\]
Since $0<\kappa<1/4$, we have $1-4\kappa>0$. Hence,
\[
\|u\|_\varepsilon^2 \geq (1-4\kappa)\varepsilon^2\|\nabla u\|_2^2 +
V_0\|u\|_2^2.
\]
Taking
\[
C_\kappa:=\min\{1-4\kappa,V_0\}>0,
\]
we deduce
\[
\|u\|_\varepsilon^2 \geq C_\kappa \left( \varepsilon^2\|\nabla
u\|_2^2+\|u\|_2^2 \right).
\]
This completes the proof.
\end{proof}

The next estimate exploits the prescribed mass constraint to control
the subcritical nonlinear term in terms of the gradient norm.

\begin{lemma}\label{lem:GN}
Let $2<r<6$. Then there exists $C_r>0$ such that, for every $u\in
S_{a,\varepsilon}$,
\[
\|u\|_r^r \leq C_r \|u\|_2^{r(1-\theta_r)} \|\nabla
u\|_2^{r\theta_r}, \qquad \theta_r=\frac{3(r-2)}{2r}.
\]
In particular,
\[
\|u\|_q^q \leq C
a^{q(1-\theta_q)}\varepsilon^{\frac{3q}{2}(1-\theta_q)} \|\nabla
u\|_2^{q\theta_q}, \qquad q\theta_q=\frac{3(q-2)}2<2.
\]
\end{lemma}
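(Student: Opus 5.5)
The plan is to derive the estimate from the classical Gagliardo--Nirenberg inequality in $\mathbb R^3$ and then insert the mass constraint defining $S_{a,\varepsilon}$. Nothing specific to $S_{a,\varepsilon}$ is needed for the first inequality. It holds for every $u\in H^1(\mathbb R^3)$ and is a pure interpolation statement.

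\emph{Step 1.} Fix $2<r<6$. By H\"older interpolation between $L^2$ and $L^6$, with $\frac1r=\frac{1-\theta_r}{2}+\frac{\theta_r}{6}$, we get $\|u\|_r\le\|u\|_2^{1-\theta_r}\|u\|_6^{\theta_r}$. Solving for the exponent gives $\theta_r=\frac{3(r-2)}{2r}\in(0,1)$, matching the statement. Next we apply the Sobolev inequality $\|u\|_6\le S^{-1/2}\|\nabla u\|_2$, valid on $H^1(\mathbb R^3)\subset D^{1,2}(\mathbb R^3)$. Raising to the power $r$ yields
\[
\|u\|_r^r\le S^{-r\theta_r/2}\,\|u\|_2^{r(1-\theta_r)}\|\nabla u\|_2^{r\theta_r},
\]
so we may take $C_r:=S^{-r\theta_r/2}$. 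The sharp Weinstein constant is not needed.

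\emph{Step 2.} Now take $r=q$ and $u\in S_{a,\varepsilon}$, so that $\|u\|_2=a\varepsilon^{3/2}$. Then $\|u\|_2^{q(1-\theta_q)}=a^{q(1-\theta_q)}\varepsilon^{\frac{3q}{2}(1-\theta_q)}$, which gives the displayed bound with $C=C_q$. Finally we compute $q\theta_q=\frac{3(q-2)}{2}$. Since $2<q<\frac{10}{3}$, this lies in $(0,2)$, which is exactly the $L^2$-subcritical condition recorded in the statement.

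There is no genuine obstacle here. The only point requiring care is the exponent bookkeeping: checking that $\theta_r$ from H\"older interpolation coincides with the stated formula, and tracking the power of $\varepsilon$ produced by the normalization $a^2\varepsilon^3$. I would verify these by direct substitution. I would also note that the bound involves the unweighted $\|\nabla u\|_2$. It can later be combined with Lemma \ref{lem:hardy_coercive} to pass to $\|u\|_\varepsilon$, at the cost of factors $\varepsilon^{-q\theta_q}$.
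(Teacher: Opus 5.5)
Your proposal is correct and follows the same route as the paper: a Gagliardo--Nirenberg bound with $\theta_r=\frac{3(r-2)}{2r}$, then substituting $\|u\|_2=a\varepsilon^{3/2}$ and checking that $q\theta_q=\frac{3(q-2)}{2}<2$. The only difference is that you derive the Gagliardo--Nirenberg bound from H\"older interpolation between $L^2$ and $L^6$ plus the Sobolev inequality instead of citing it, which gives the explicit constant $C_r=S^{-r\theta_r/2}$ and keeps the power $r$ on the constant that the paper silently absorbs when it writes $C_r$ in place of $C_r^r$.
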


\begin{proof}
Let $2<r<6$. By the Gagliardo--Nirenberg inequality in $\mathbb
R^3$, there exists a constant $C_r>0$ such that
\[
\|u\|_r \leq C_r \|\nabla u\|_2^{\theta_r} \|u\|_2^{1-\theta_r},
\qquad u\in H^1(\mathbb R^3),
\]
where $\theta_r\in(0,1)$ is determined by
\[
\frac1r = \theta_r\left(\frac12-\frac13\right) +
\frac{1-\theta_r}{2}.
\]
Since $\frac12-\frac13=\frac16,$ we obtain $\frac1r =
\frac{\theta_r}{6} + \frac{1-\theta_r}{2} =
\frac12-\frac{\theta_r}{3}.$ Thus,
\[
\theta_r = 3\left(\frac12-\frac1r\right) = \frac{3(r-2)}{2r}.
\]
Raising the Gagliardo--Nirenberg inequality to the power $r$, we get
\[
\|u\|_r^r \leq C_r \|u\|_2^{r(1-\theta_r)} \|\nabla
u\|_2^{r\theta_r}.
\]
Now, if $u\in S_{a,\varepsilon}$, then $\|u\|_2^2=a^2\varepsilon^3,$
and therefore
\[
\|u\|_2=a\varepsilon^{3/2}.
\]
Applying the previous estimate with $r=q$, we find
\[
\|u\|_q^q \leq C \|u\|_2^{q(1-\theta_q)} \|\nabla u\|_2^{q\theta_q}.
\]
Using $\|u\|_2=a\varepsilon^{3/2}$, this becomes
\[
\|u\|_q^q \leq C a^{q(1-\theta_q)}
\varepsilon^{\frac{3q}{2}(1-\theta_q)} \|\nabla u\|_2^{q\theta_q}.
\]
Finally, from $\theta_q=\frac{3(q-2)}{2q},$ we have
\[
q\theta_q=\frac{3(q-2)}2.
\]
Since $q<\frac{10}{3}$, it follows that
\[
q\theta_q = \frac{3(q-2)}2 < 2.
\]
This proves the desired estimate.
\end{proof}

The following estimate controls the nonlocal Poisson interaction by
the critical Sobolev norm and records its homogeneity properties.

\begin{lemma}\label{lem:poisson_estimate}
There exists $C>0$ such that, for every $u\in H_\varepsilon$,
\[
\int_{\mathbb R^3}\phi_u|u|^5\,dx \leq C\varepsilon^{-2}\|u\|_6^{10}
\leq C\varepsilon^{-2}\|\nabla u\|_2^{10}.
\]
Moreover, for every $t\geq0$, $\phi_{tu}=t^5\phi_u$ and
\[
\int_{\mathbb R^3}\phi_{tu}|tu|^5\,dx = t^{10} \int_{\mathbb
R^3}\phi_u|u|^5\,dx.
\]
\end{lemma}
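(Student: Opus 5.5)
The plan is to reduce everything to the double-integral representation \eqref{Poisson-double-integral}, to the Hardy--Littlewood--Sobolev inequality and to the Sobolev embedding. By \eqref{Poisson-double-integral},
\[
\int_{\mathbb R^3}\phi_u|u|^5\,dx=\frac1{4\pi\varepsilon^2}\iint_{\mathbb R^3\times\mathbb R^3}\frac{|u(x)|^5|u(y)|^5}{|x-y|}\,dx\,dy .
\]
In dimension three the Hardy--Littlewood--Sobolev inequality with kernel $|x-y|^{-1}$ applies to $f,g\in L^p$ whenever $\frac1p+\frac1p+\frac13=2$, that is, $p=\frac65$. I will take $f=g=|u|^5$. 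Since $u\in H^1(\mathbb R^3)\hookrightarrow L^6(\mathbb R^3)$, we have $|u|^5\in L^{6/5}$ with $\||u|^5\|_{6/5}=\|u\|_6^5$. Hence the double integral is bounded by $C_{HLS}\|u\|_6^{10}$, which gives the first inequality with $C=C_{HLS}/(4\pi)$. This is just \eqref{Poisson-estimate}, made explicit.

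For the second inequality I will invoke the Sobolev inequality $\|u\|_6\le S^{-1/2}\|\nabla u\|_2$, valid for all $u\in D^{1,2}(\mathbb R^3)\supset H_\varepsilon$. Raising it to the tenth power gives $\|u\|_6^{10}\le S^{-5}\|\nabla u\|_2^{10}$. I will then enlarge $C$ to $\max\{C_{HLS}/(4\pi),\,C_{HLS}S^{-5}/(4\pi)\}$ so that one constant serves both inequalities. This constant is independent of $\varepsilon$, since all the $\varepsilon$-dependence is the explicit factor $\varepsilon^{-2}$ coming from the Green function of $-\varepsilon^2\Delta$.

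For the homogeneity, I will use the explicit formula $\phi_{tu}(x)=\frac1{4\pi\varepsilon^2}\int\frac{|tu(y)|^5}{|x-y|}\,dy=t^5\phi_u(x)$ for $t\ge0$. Alternatively, uniqueness of the $D^{1,2}$ solution of \eqref{Poisson} also gives it: $t^5\phi_u$ solves the equation with right-hand side $|tu|^5$. Multiplying by $|tu|^5=t^5|u|^5$ and integrating then gives the factor $t^{10}$.

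No step is a genuine obstacle. The only point needing care is checking the HLS exponent pairing $p=r=6/5$ with $\lambda=1$, $N=3$, and noting that the constants are uniform in $\varepsilon$.
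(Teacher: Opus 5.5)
Your proposal is correct and follows essentially the same route as the paper: the Newton-potential double-integral representation, the Hardy--Littlewood--Sobolev inequality with $|u|^5\in L^{6/5}$ so that $\||u|^5\|_{6/5}^2=\|u\|_6^{10}$, the Sobolev inequality, and the homogeneity $\phi_{tu}=t^5\phi_u$ via uniqueness of the $D^{1,2}$ solution (your explicit-formula alternative is equally valid). Your remark that the constant is independent of $\varepsilon$, with all $\varepsilon$-dependence carried by the factor $\varepsilon^{-2}$ from the Green function of $-\varepsilon^2\Delta$, is a useful clarification that the paper leaves implicit.
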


\begin{proof}
Let $u\in H_\varepsilon$. The function $\phi_u\in D^{1,2}(\mathbb
R^3)$ is the unique weak solution of
\[
-\varepsilon^2\Delta\phi_u=|u|^5 \qquad\text{in }\mathbb R^3.
\]
Hence, by the Newton potential representation,
\[
\phi_u(x) = \frac{1}{4\pi\varepsilon^2} \int_{\mathbb R^3}
\frac{|u(y)|^5}{|x-y|}\,dy.
\]
Therefore,
\[
\int_{\mathbb R^3}\phi_u|u|^5\,dx = \frac{1}{4\pi\varepsilon^2}
\iint_{\mathbb R^3\times\mathbb R^3} \frac{|u(x)|^5|u(y)|^5}{|x-y|}
\,dx\,dy.
\]
By the Hardy--Littlewood--Sobolev inequality,
\[
\iint_{\mathbb R^3\times\mathbb R^3} \frac{|u(x)|^5|u(y)|^5}{|x-y|}
\,dx\,dy \leq C\bigl\||u|^5\bigr\|_{6/5}^2.
\]
Since
\[
\bigl\||u|^5\bigr\|_{6/5} = \left( \int_{\mathbb R^3}|u|^6\,dx
\right)^{5/6} = \|u\|_6^5,
\]
we obtain
\[
\int_{\mathbb R^3}\phi_u|u|^5\,dx \leq
C\varepsilon^{-2}\|u\|_6^{10}.
\]
Using the Sobolev inequality
\[
\|u\|_6\leq C\|\nabla u\|_2,
\]
we further obtain
\[
\int_{\mathbb R^3}\phi_u|u|^5\,dx \leq C\varepsilon^{-2}\|\nabla
u\|_2^{10}.
\]
Finally, let $t\geq0$. Since
\[
-\varepsilon^2\Delta(t^5\phi_u) = t^5|u|^5 = |tu|^5,
\]
the uniqueness of the weak solution in $D^{1,2}(\mathbb R^3)$ yields
\[
\phi_{tu}=t^5\phi_u.
\]
Consequently,
\[
\int_{\mathbb R^3}\phi_{tu}|tu|^5\,dx = t^{10} \int_{\mathbb
R^3}\phi_u|u|^5\,dx.
\]
The proof is complete.
\end{proof}
We are now in a position to establish the variational geometry of
the constrained functional on $S_{a,\varepsilon}$.

\begin{lemma}\label{lem:mp_geometry}
There exist $a_*>0$, $\varepsilon_*>0$, $\rho>0$ and $\alpha>0$ such
that, for every $a\in(0,a_*),$ $\varepsilon\in(0,\varepsilon_*),$
the functional $J_\varepsilon|_{S_{a,\varepsilon}}$ possesses the
mountain-pass geometry. More precisely, there exist $u_0,e\in
S_{a,\varepsilon}$ such that
\[
\varepsilon^2\|\nabla u_0\|_2^2<\rho\varepsilon^3, \qquad
J_\varepsilon(u_0)<\alpha\varepsilon^3,
\]
and
\[
\varepsilon^2\|\nabla e\|_2^2>\rho\varepsilon^3, \qquad
J_\varepsilon(e)<0.
\]
Moreover, $J_\varepsilon(u)\geq\alpha\varepsilon^3$ for every $u\in
S_{a,\varepsilon}$ satisfying $\varepsilon^2\|\nabla
u\|_2^2=\rho\varepsilon^3.$
\end{lemma}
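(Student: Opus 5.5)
The plan is to convert the constrained functional into an explicit function of the rescaled gradient norm, and to read off the three claims of Lemma \ref{lem:mp_geometry} from it. Set $t:=\varepsilon^{-1}\|\nabla u\|_2^2$, so that $\varepsilon^2\|\nabla u\|_2^2=t\varepsilon^3$. This is the natural variable: for $v(x)=u(\varepsilon x)$ with $\|v\|_2=a$ one has $\|\nabla u\|_2^2=\varepsilon\|\nabla v\|_2^2$.

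\textbf{Lower bound.} From Lemma \ref{lem:hardy_coercive} we get $\frac12\|u\|_\varepsilon^2\ge \frac{1-4\kappa}{2}t\varepsilon^3+\frac{V_0}{2}a^2\varepsilon^3$. Lemma \ref{lem:GN} gives
\[
\|u\|_q^q\le C a^{q(1-\theta_q)}\varepsilon^{\frac{3q}{2}(1-\theta_q)}(\varepsilon t)^{q\theta_q/2}=Ca^{q(1-\theta_q)}\varepsilon^{3}t^{q\theta_q/2},
\]
after checking that the exponent of $\varepsilon$ is $\frac{3q}{2}-\frac{3(q-2)}{2}+\frac{3(q-2)}{4}\cdot\frac{2}{3}\cdot\ldots=3$. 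I will verify this directly; by scaling it must equal $3$. Sobolev gives $\|u\|_6^6\le C(\varepsilon t)^3=C\varepsilon^3t^3$. Lemma \ref{lem:poisson_estimate} gives $\int\phi_u|u|^5\le C\varepsilon^{-2}(\varepsilon t)^5=C\varepsilon^3t^5$. Hence, for $u\in S_{a,\varepsilon}$,
\[
\varepsilon^{-3}J_\varepsilon(u)\ge h_a(t):=\tfrac{V_0}{2}a^2+\tfrac{1-4\kappa}{2}t-C_1a^{q(1-\theta_q)}t^{q\theta_q/2}-C_2t^3-C_3t^5,
\]
with constants independent of $\varepsilon$. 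Since $q\theta_q/2<1$, the subcritical term dominates near $0$; this is why small $a$ is needed.

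\textbf{Choice of $\rho$ and $\alpha$.} Fix $\rho>0$ small, depending only on $\kappa$ and the $C_i$, such that $C_2\rho^3+C_3\rho^5\le\frac{1-4\kappa}{8}\rho$. Then pick $a_*$ such that $C_1a^{q(1-\theta_q)}\rho^{q\theta_q/2}\le\frac{1-4\kappa}{8}\rho$ for $a<a_*$. This yields $h_a(\rho)\ge \frac{V_0}{2}a^2+\frac{1-4\kappa}{4}\rho$. For $u_0$ I will take $u_0$ with small $t_0$, obtained as $\tau\star w$ with $\tau$ small (mass preserved by \eqref{mass-scaling}). Its upper bound is $\varepsilon^{-3}J_\varepsilon(u_0)\le \frac{\|V\|_{L^\infty(\text{supp})}}{2}a^2+\frac{1}{2}t_0$. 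This introduces a mismatch: the potential term is at least $V_0a^2/2$ on both sides, and $V(x/\tau)$ sees large $|x|$ as $\tau\to0$. I would therefore concentrate instead: take $u_0(x)=\varepsilon^{-0}w((x-x_0)/\varepsilon)$ with $x_0\in\mathcal M$ and $w$ spread (small gradient), so that $V\approx V_0$ on its support as $\varepsilon\to0$. This uses $\varepsilon_*$ and continuity of $V$. Then $\varepsilon^{-3}J_\varepsilon(u_0)\le\frac{V_0+o(1)}{2}a^2+\frac12 t_0+o(1)$. Here the Hardy term is dropped since it is negative in $J$. Choosing $\alpha:=\frac{V_0}{2}a^2+\frac{1-4\kappa}{8}\rho$ and $t_0$ small, with $\varepsilon<\varepsilon_*$ making the $o(1)$ terms less than $\frac{1-4\kappa}{16}\rho$, gives $J(u_0)<\alpha\varepsilon^3\le J$ on the sphere $t=\rho$.

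\textbf{The point $e$.} Use the same localized profile but dilated in the concentrating direction, $w_s=s\star w$ with $s\to\infty$ at the rescaled level. By \eqref{scaling-gradient} and \eqref{scaling-Lr}, the gradient term grows like $s^2$, the $L^6$ term like $s^6$, and the Poisson term like $s^{10}$ by \eqref{scaling-Poisson}. The potential term stays bounded by $\sup V$ on a neighborhood times $a^2$, since support shrinks toward $x_0$. Thus $J_\varepsilon(e)<0$ and $t_e>\rho$ for $s$ large, uniformly in $\varepsilon$.

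\textbf{Main obstacle.} The main obstacle is the constant $V_0a^2/2$ shift and the $\varepsilon$-uniformity of the upper bound at $u_0$. This is handled by localizing near $\mathcal M$, which is where (V2)--(V3) and $0\notin\overline\Lambda$ enter. That assumption also keeps the Hardy term harmless, although for upper bounds it only helps anyway.
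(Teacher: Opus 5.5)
Your lower bound on $\{\varepsilon^2\|\nabla u\|_2^2=\rho\varepsilon^3\}$ is the same as the paper's Step 1, and so is your construction of $e$ by a mass-preserving dilation of a localized bump about its center (Step 3). Your displayed exponent check is garbled, although the value $3$ is right: $\frac{3q}{2}(1-\theta_q)+\frac{q\theta_q}{2}=\frac{3q}{2}-\frac{9(q-2)}{4}+\frac{3(q-2)}{4}=3$.

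The one substantive problem is your choice $\alpha:=\frac{V_0}{2}a^2+\frac{1-4\kappa}{8}\rho$. It depends on $a$, whereas the lemma fixes $\alpha$ together with $\rho,a_*,\varepsilon_*$ before $a\in(0,a_*)$ is taken. Localizing $u_0$ at a point of $\mathcal M$ cannot cure this. Whenever $\varepsilon^2\|\nabla u_0\|_2^2<\rho\varepsilon^3$, the Hardy term and the nonlinear terms are controlled by the gradient, so $J_\varepsilon(u_0)\ge\varepsilon^3\bigl(\frac{V_0}{2}a^2-O(\rho)\bigr)$. No choice of $u_0$ removes the mass term. An $a$-independent $\alpha$, which must also lie below the sphere bound for all small $a$, therefore forces an extra smallness condition on $a_*$ relative to $\rho$. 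This is exactly the step your proposal tries to avoid, and it is why you misidentify the main obstacle.

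The paper proceeds as follows:
\begin{itemize}
\item It drops $\frac{V_0}{2}a^2$ from the sphere bound and sets $\alpha:=\frac{1-4\kappa}{4}\rho$.
\item It takes $u_0(x)=\varphi_a\bigl(\frac{x-y_0}{\varepsilon}\bigr)$ for an arbitrary $y_0\in\Lambda$, with the crude bound $J_\varepsilon(u_0)\le\frac12\bigl(\|\nabla\varphi\|_2^2/\|\varphi\|_2^2+\sup_{K}V\bigr)a^2\varepsilon^3$, where $K$ is a fixed compact set containing the supports.
\item It then decreases $a_*$ until this is below $\alpha\varepsilon^3$ and $a^2\|\nabla\varphi\|_2^2/\|\varphi\|_2^2<\rho$.
\end{itemize}
With that choice, your localization at $\mathcal M$, the continuity argument $V\approx V_0$, and the $o(1)$ bookkeeping in $\varepsilon$ are all unnecessary.

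Your version does gain something: the barrier sits above the unavoidable mass energy $\frac{V_0}{2}a^2\varepsilon^3$, so $a$ needs to be small only because of the $L^q$ term. That suffices for a mountain-pass argument at fixed $a$. However, it proves the lemma only with $\alpha=\alpha(a)$, not in the uniform form stated.
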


\begin{proof}
We divide the proof into three steps.

\medskip
\noindent\textbf{Step 1. Positivity on an intermediate gradient
sphere.}

Let $u\in S_{a,\varepsilon}$. By the definition of $J_\varepsilon$
and Lemma~\ref{lem:hardy_coercive},
\begin{align}
J_\varepsilon(u) \geq{}& \frac{1-4\kappa}{2}\varepsilon^2\|\nabla
u\|_2^2 +\frac{V_0}{2}a^2\varepsilon^3 -\frac1{10}\int_{\mathbb
R^3}\phi_u|u|^5\,dx
\nonumber\\
&-\frac{\mu}{q}\|u\|_q^q -\frac16\|u\|_6^6. \label{mp-lower-1}
\end{align}
By Lemmas \ref{lem:poisson_estimate} and \ref{lem:GN}, together with
the Sobolev inequality,
\[
\int_{\mathbb R^3}\phi_u|u|^5\,dx \leq C\varepsilon^{-2}\|\nabla
u\|_2^{10},
\]
\[
\|u\|_6^6\leq C\|\nabla u\|_2^6,
\]
and
\[
\|u\|_q^q \leq Ca^{q(1-\theta_q)}
\varepsilon^{\frac{3q}{2}(1-\theta_q)} \|\nabla u\|_2^{q\theta_q},
\qquad q\theta_q=\frac{3(q-2)}2<2.
\]
Set $A_\varepsilon(u) := \varepsilon^2\|\nabla u\|_2^2.$ Then
$\|\nabla u\|_2^2 = \varepsilon^{-2}A_\varepsilon(u).$ Consequently,
\begin{align}
J_\varepsilon(u) \geq{}& \frac{1-4\kappa}{2}A_\varepsilon(u)
+\frac{V_0}{2}a^2\varepsilon^3 -C\varepsilon^{-12}A_\varepsilon(u)^5
\nonumber\\
&-C\varepsilon^{-6}A_\varepsilon(u)^3 -Ca^{q(1-\theta_q)}
\varepsilon^{\frac{3q}{2}(1-\theta_q)-q\theta_q}
A_\varepsilon(u)^{q\theta_q/2}. \label{mp-lower-2}
\end{align}
If $A_\varepsilon(u)=\rho\varepsilon^3,$ then
\begin{align}
J_\varepsilon(u) \geq \varepsilon^3 \Bigg[ \frac{1-4\kappa}{2}\rho
+\frac{V_0}{2}a^2 -C\rho^5-C\rho^3
-Ca^{q(1-\theta_q)}\rho^{q\theta_q/2} \Bigg]. \label{mp-lower-3}
\end{align}
Here we used $\frac{3q}{2}(1-\theta_q)+\frac{q\theta_q}{2}=3.$
Choose $\rho>0$ sufficiently small such that
\[
C\rho^3+C\rho^5 \leq \frac{1-4\kappa}{8}\rho.
\]
After fixing $\rho$, choose $a_*>0$ sufficiently small so that
$Ca^{q(1-\theta_q)}\rho^{q\theta_q/2} \leq \frac{1-4\kappa}{8}\rho$
for every $a\in(0,a_*)$. Therefore, $J_\varepsilon(u) \geq
\varepsilon^3 \left[ \frac{1-4\kappa}{4}\rho +\frac{V_0}{2}a^2
\right].$ Setting $\alpha:=\frac{1-4\kappa}{4}\rho>0,$ we obtain
$J_\varepsilon(u)\geq\alpha\varepsilon^3$ on the set $\left\{ u\in
S_{a,\varepsilon}: \varepsilon^2\|\nabla u\|_2^2=\rho\varepsilon^3
\right\}.$

\medskip
\noindent\textbf{Step 2. Construction of a point in the low-gradient
region.}

Fix $y_0\in\Lambda$. Choose $\varphi\in C_c^\infty(\mathbb R^3)$
such that $\varphi\geq0,$ $\varphi\not\equiv0,$ and choose
$\varepsilon_*>0$ sufficiently small so that
$y_0+\varepsilon\operatorname{supp}\varphi \subset\Lambda$ for every
$\varepsilon\in(0,\varepsilon_*)$.

Define $\varphi_a:=\frac{a\varphi}{\|\varphi\|_2}$ and $u_0(x) :=
\varphi_a\left(\frac{x-y_0}{\varepsilon}\right).$ Then $\|u_0\|_2^2
= a^2\varepsilon^3,$ so that $u_0\in S_{a,\varepsilon}$. Moreover,
\[
\varepsilon^2\|\nabla u_0\|_2^2 = a^2\varepsilon^3
\frac{\|\nabla\varphi\|_2^2}{\|\varphi\|_2^2}.
\]
After decreasing $a_*$, if necessary, we have $\varepsilon^2\|\nabla
u_0\|_2^2 < \rho\varepsilon^3.$

Since the support of $u_0$ is contained in a fixed compact subset of
$\Lambda$ for $\varepsilon\in(0,\varepsilon_*)$, the continuity of
$V$ implies $\sup_{\operatorname{supp}u_0}V<+\infty.$ The kinetic
and potential terms are therefore of order $a^2\varepsilon^3$. Since
all nonlinear contributions and the Hardy term are nonpositive in
the energy,
\[
J_\varepsilon(u_0) \leq Ca^2\varepsilon^3.
\]
Decreasing $a_*$ once more, we may assume that $Ca^2<\alpha,$ and
hence $J_\varepsilon(u_0)<\alpha\varepsilon^3.$

\medskip
\noindent\textbf{Step 3. Construction of a negative-energy point
outside the intermediate sphere.} For $t>0$, define
$(t\star_{y_0}u_0)(x) := t^{3/2}u_0\bigl(y_0+t(x-y_0)\bigr).$ A
change of variables gives $\|t\star_{y_0}u_0\|_2=\|u_0\|_2,$ and
hence $t\star_{y_0}u_0\in S_{a,\varepsilon}.$ Moreover,
\[
\|\nabla(t\star_{y_0}u_0)\|_2^2 = t^2\|\nabla u_0\|_2^2,
\]
\[
\|t\star_{y_0}u_0\|_q^q = t^{\frac{3(q-2)}2}\|u_0\|_q^q,
\]
\[
\|t\star_{y_0}u_0\|_6^6 = t^6\|u_0\|_6^6,
\]
and $\int_{\mathbb R^3} \phi_{t\star_{y_0}u_0}
|t\star_{y_0}u_0|^5\,dx = t^{10} \int_{\mathbb
R^3}\phi_{u_0}|u_0|^5\,dx.$ For $t\geq1$, the support of
$t\star_{y_0}u_0$ is contained in $K := y_0+\varepsilon_*
\operatorname{supp}\varphi,$ which is a compact subset of $\Lambda$
and is separated from the origin. Thus
\[
\int_{\mathbb R^3} V(x)|t\star_{y_0}u_0|^2\,dx \leq \left(\sup_{x\in
K}V(x)\right)\|u_0\|_2^2.
\]
The Hardy contribution is nonpositive, so it can be omitted in an
upper estimate. Consequently,
\[
J_\varepsilon(t\star_{y_0}u_0) \leq C_1t^2+C_2-C_3t^{10}-C_4t^6
-C_5t^{\frac{3(q-2)}2},
\]
where the constants $C_j$ may depend on $a,\varepsilon,u_0$ and
$y_0$, and
\[
C_3 = \frac1{10} \int_{\mathbb R^3}\phi_{u_0}|u_0|^5\,dx
>0.
\]
Therefore,
\[
J_\varepsilon(t\star_{y_0}u_0) \longrightarrow-\infty \qquad\text{as
}t\to+\infty.
\]
Furthermore,
\[
\varepsilon^2 \|\nabla(t\star_{y_0}u_0)\|_2^2 =
t^2\varepsilon^2\|\nabla u_0\|_2^2 \longrightarrow+\infty.
\]
Hence there exists $t_\varepsilon>1$ such that, with
$e:=t_\varepsilon\star_{y_0}u_0,$ we have $J_\varepsilon(e)<0$ and
$\varepsilon^2\|\nabla e\|_2^2> \rho\varepsilon^3.$ Finally, the map
\[
u\longmapsto\varepsilon^2\|\nabla u\|_2^2
\]
is continuous on $H_\varepsilon$. Therefore, every continuous path
in $S_{a,\varepsilon}$ joining $u_0$ to $e$ must intersect the set
\[
\left\{ u\in S_{a,\varepsilon}: \varepsilon^2\|\nabla u\|_2^2 =
\rho\varepsilon^3 \right\}.
\]
On this set, $J_\varepsilon(u)\geq\alpha\varepsilon^3,$ whereas
$J_\varepsilon(u_0)<\alpha\varepsilon^3, \qquad J_\varepsilon(e)<0.$
This proves the mountain-pass geometry.
\end{proof}

Motivated by the preceding mountain-pass geometry, we introduce the
associated minimax class and the corresponding critical level.
\begin{definition}\label{def:mp_level}
Let
\[
\Gamma_\varepsilon := \left\{ \gamma\in C([0,1],S_{a,\varepsilon}):
\gamma(0)=u_0,\ J_\varepsilon(\gamma(1))<0,\
\varepsilon^2\|\nabla\gamma(1)\|_2^2>\rho\varepsilon^3 \right\}.
\]
We define
\[
c_\varepsilon := \inf_{\gamma\in\Gamma_\varepsilon}
\max_{t\in[0,1]}J_\varepsilon(\gamma(t)).
\]
Then
\[
c_\varepsilon\geq\alpha\varepsilon^3>0.
\]
\end{definition}
To refine the minimax construction, we next introduce the
mass-preserving scaling and derive the associated Pohozaev
functional.

\begin{lemma}\label{lem:scaling_pohozaev}
For $u\in S_{a,\varepsilon}$ and $t>0$, define $(t\star
u)(x):=t^{3/2}u(tx).$ Then $t\star u\in S_{a,\varepsilon}
\qquad\text{for every }t>0.$ Moreover, the map $t\longmapsto
J_\varepsilon(t\star u)$ is differentiable and
\[
\left. \frac{d}{dt}J_\varepsilon(t\star u) \right|_{t=1} =
P_\varepsilon(u),
\]
where
\begin{align}
P_\varepsilon(u) :={}& \varepsilon^2 \int_{\mathbb R^3} \left(
|\nabla u|^2 -\kappa\frac{|u|^2}{|x|^2} \right)\,dx -\frac12
\int_{\mathbb R^3} \nabla V(x)\cdot x\,|u|^2\,dx
\nonumber\\
&- \int_{\mathbb R^3}\phi_u|u|^5\,dx -\frac{3\mu(q-2)}{2q}
\int_{\mathbb R^3}|u|^q\,dx -\int_{\mathbb R^3}|u|^6\,dx.
\label{eq:Pohozaev-functional}
\end{align}
In particular, every constrained critical point $u\in
S_{a,\varepsilon}$ of $J_\varepsilon$ satisfies
\[
P_\varepsilon(u)=0.
\]
\end{lemma}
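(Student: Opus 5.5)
The plan is to verify each assertion by direct computation with the change of variables $y=tx$. For the first claim, $\|t\star u\|_2^2=\int t^3|u(tx)|^2\,dx=\|u\|_2^2=a^2\varepsilon^3$. Since $V(x)\ge V_0$ and $\int V(x/t)|u|^2\,dx$ will be shown finite below, $t\star u\in S_{a,\varepsilon}$.

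Next I express $J_\varepsilon(t\star u)$ as an explicit function of $t$, term by term. By \eqref{scaling-gradient}, \eqref{scaling-Hardy} and \eqref{scaling-Lr}, the kinetic and Hardy terms scale like $t^2$, the $L^q$ term like $t^{3(q-2)/2}$, and the $L^6$ term like $t^6$.

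For the Poisson term I use the Newton representation of $\phi_u$. Substituting $z=ty$ in
\[
\phi_{t\star u}(x)=\frac{1}{4\pi\varepsilon^2}\int_{\mathbb R^3}\frac{t^{15/2}|u(ty)|^5}{|x-y|}\,dy
\]
gives $\phi_{t\star u}(x)=t^{11/2}\phi_u(tx)$. Hence
\[
\int_{\mathbb R^3}\phi_{t\star u}|t\star u|^5\,dx=t^{11/2+15/2-3}\int_{\mathbb R^3}\phi_u|u|^5\,dx=t^{10}\int_{\mathbb R^3}\phi_u|u|^5\,dx,
\]
which confirms \eqref{scaling-Poisson}.

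The only non-homogeneous term is $\frac12\int V(x/t)|u|^2\,dx$. To differentiate it under the integral sign, I note that
\[
\frac{d}{dt}V(x/t)=-t^{-1}\,\nabla V(x/t)\cdot (x/t).
\]
By (V1) this is bounded by $C_V t^{-1}$, uniformly in $x$ and locally uniformly for $t$ near any $t_0>0$. Dominated convergence then gives differentiability, with derivative $-\frac12\int\nabla V(x)\cdot x\,|u|^2\,dx$ at $t=1$. Also $\int V(x/t)|u|^2\,dx<\infty$, because integrating the derivative bound yields $|V(x/t)-V(x)|\le C_V|\log t|$. Differentiating the monomials in $t$ at $t=1$ gives the coefficients $\varepsilon^2$, $-1$, $-\frac{3\mu(q-2)}{2q}$ and $-1$. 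Together with the potential term this reproduces exactly \eqref{eq:Pohozaev-functional}, so $t\mapsto J_\varepsilon(t\star u)$ is differentiable with derivative $P_\varepsilon(u)$ at $t=1$.

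The main obstacle is the final assertion $P_\varepsilon(u)=0$ for constrained critical points. The tempting argument is: $t\mapsto t\star u$ is a curve in $S_{a,\varepsilon}$ through $u$, so $\frac{d}{dt}J_\varepsilon(t\star u)|_{t=1}=\langle J'_\varepsilon(u),\frac32u+x\cdot\nabla u\rangle$, and this vanishes because the velocity lies in $T_uS_{a,\varepsilon}$. The gap is that this chain rule needs the curve to be differentiable in $H_\varepsilon$, which requires $x\cdot\nabla u\in H^1$, and that is not known a priori.

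My plan is therefore to derive a Pohozaev identity directly from the Lagrange equation \eqref{Lagrange} and then combine it with \eqref{lambda-identity}:
\begin{enumerate}
\item Use elliptic regularity, valid on $\mathbb R^3\setminus\{0\}$ since the coefficients are smooth there and the nonlinearities have at most critical growth (Brezis--Kato type bootstrap), to get $u\in W^{2,p}_{\rm loc}(\mathbb R^3\setminus\{0\})$.
\item Test \eqref{Lagrange} with $\psi_{R,r}(x)\,x\cdot\nabla u$, where $\psi_{R,r}$ cuts off both near infinity (radius $R$) and near the origin (radius $r$).
\item Integrate by parts and let $R\to\infty$, $r\to0$. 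The boundary contributions vanish in the limit because $|\nabla u|^2$, $|u|^2/|x|^2$ and $\phi_u|u|^5$ are integrable and $x\cdot\nabla(|x|^{-2})=-2|x|^{-2}$. The Poisson term is handled by the identity $\int\phi_u|u|^3u\,(x\cdot\nabla u)=-\frac{1}{2}\int\phi_u|u|^5$, which follows from the symmetric double-integral form \eqref{Poisson-double-integral}. The resulting identity contains $\lambda$.
\item Eliminate $\lambda$ using \eqref{lambda-identity} (this is the step where the $\frac32u$ part of the velocity enters). This yields exactly $P_\varepsilon(u)=0$.
\end{enumerate}
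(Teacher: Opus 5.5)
Your proposal is correct, and it departs from the paper only in the last assertion. For the scaling and the derivative formula you follow the paper's computation, but more carefully. Your $\phi_{t\star u}(x)=t^{11/2}\phi_u(tx)$ has the right exponent; the paper's $t^{13/2}$ is a slip that would give $t^{11}$ rather than $t^{10}$. You also justify via (V1) both the differentiation under the integral in $\frac12\int V(x/t)|u|^2\,dx$ and the finiteness of $\int V|t\star u|^2\,dx$, which the paper takes for granted.

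For $P_\varepsilon(u)=0$, the paper simply argues that the fiber $t\mapsto t\star u$ lies in $S_{a,\varepsilon}$, so the derivative must vanish at a constrained critical point. This tacitly uses the chain rule, i.e.\ differentiability of $t\mapsto t\star u$ in $H_\varepsilon$ with velocity $\frac32u+x\cdot\nabla u\in H_\varepsilon$. That is not known for a general critical point, which is exactly the gap you flag.

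Your route (regularity on $\mathbb R^3\setminus\{0\}$, testing with $\psi_{R,r}\,x\cdot\nabla u$, then adding $\frac32$ times \eqref{lambda-identity}) is the standard Berestycki--Lions-type Pohozaev argument, and the bookkeeping closes. The combination cancels both $\lambda$ and $\int V|u|^2$ and leaves precisely $P_\varepsilon(u)=0$. The Hardy term comes out with coefficient $-\kappa\varepsilon^2$ because $\operatorname{div}(x|x|^{-2})=|x|^{-2}$ in $\mathbb R^3$. The paper's version buys brevity but is only formal; yours is an actual proof, at the cost of the bootstrap and of controlling the cut-off terms near the origin and at infinity.

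One practical remark for when you write it out: with the cut-off present, do not treat the Poisson term through the double integral. The cut-off destroys its symmetry, and $|x|\,|u|^5|\nabla\phi_u|$ is not a priori known to be integrable. Instead, substitute $|u|^5=-\varepsilon^2\Delta\phi_u$ and do the same Pohozaev computation for $\phi_u\in H^2_{\rm loc}(\mathbb R^3\setminus\{0\})$. Then all annulus errors are controlled by $\int|\nabla\phi_u|^2$ over the annuli.
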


\begin{proof}
Let $u\in S_{a,\varepsilon}$ and $t>0$. By a change of variables,
\[
\|t\star u\|_2^2 = \int_{\mathbb R^3} t^3|u(tx)|^2\,dx =
\int_{\mathbb R^3}|u(y)|^2\,dy = a^2\varepsilon^3.
\]
Hence
\[
t\star u\in S_{a,\varepsilon}.
\]
We next compute the behavior of each term of the energy under this
scaling. First,
\[
\|\nabla(t\star u)\|_2^2 = t^2\|\nabla u\|_2^2.
\]
Similarly, using the homogeneity of the Hardy potential,
\[
\int_{\mathbb R^3} \frac{|t\star u|^2}{|x|^2}\,dx = t^2
\int_{\mathbb R^3} \frac{|u|^2}{|x|^2}\,dx.
\]
For the potential term, a change of variables gives
\[
\int_{\mathbb R^3} V(x)|t\star u(x)|^2\,dx = \int_{\mathbb R^3}
V\left(\frac{x}{t}\right)|u(x)|^2\,dx.
\]
Therefore,
\[
\left. \frac{d}{dt} \left[ \frac12 \int_{\mathbb R^3}
V\left(\frac{x}{t}\right)|u(x)|^2\,dx \right] \right|_{t=1} =
-\frac12 \int_{\mathbb R^3} \nabla V(x)\cdot x\,|u|^2\,dx.
\]
Furthermore,
\[
\|t\star u\|_q^q = t^{\frac{3(q-2)}2}\|u\|_q^q
\]
and
\[
\|t\star u\|_6^6 = t^6\|u\|_6^6.
\]
We now consider the Poisson interaction. By the Newton
representation,
\[
\phi_u(x) = \frac{1}{4\pi\varepsilon^2} \int_{\mathbb R^3}
\frac{|u(y)|^5}{|x-y|}\,dy.
\]
A direct computation shows that
\[
\phi_{t\star u}(x) = t^{13/2}\phi_u(tx).
\]
Consequently,
\begin{align*}
\int_{\mathbb R^3} \phi_{t\star u}|t\star u|^5\,dx &= t^{10}
\int_{\mathbb R^3} \phi_u|u|^5\,dx.
\end{align*}
It follows that
\begin{align}
J_\varepsilon(t\star u) ={}& \frac{t^2}{2} \varepsilon^2
\int_{\mathbb R^3} \left( |\nabla u|^2 -\kappa\frac{|u|^2}{|x|^2}
\right)\,dx
\nonumber\\
&+ \frac12 \int_{\mathbb R^3} V\left(\frac{x}{t}\right)|u|^2\,dx
-\frac{t^{10}}{10} \int_{\mathbb R^3}\phi_u|u|^5\,dx
\nonumber\\
&- \frac{\mu}{q} t^{\frac{3(q-2)}2} \int_{\mathbb R^3}|u|^q\,dx
-\frac{t^6}{6} \int_{\mathbb R^3}|u|^6\,dx.
\label{eq:fiber-map-scaling}
\end{align}
Differentiating \eqref{eq:fiber-map-scaling} at $t=1$, we obtain
\[
\left. \frac{d}{dt}J_\varepsilon(t\star u) \right|_{t=1} =
P_\varepsilon(u),
\]
where $P_\varepsilon$ is given by \eqref{eq:Pohozaev-functional}.
Finally, let $u\in S_{a,\varepsilon}$ be a constrained critical
point of $J_\varepsilon$. Since
\[
t\star u\in S_{a,\varepsilon} \qquad\text{for every }t>0,
\]
the curve $t\longmapsto t\star u$ lies entirely in the constraint.
Therefore,
\[
\left. \frac{d}{dt}J_\varepsilon(t\star u) \right|_{t=1} = 0,
\]
and hence $P_\varepsilon(u)=0.$ The proof is complete.
\end{proof}

The mountain-pass construction must be refined by incorporating the
mass-preserving scaling, so that the resulting minimax sequence also
satisfies an asymptotic Pohozaev condition.

\begin{lemma}\label{lem:PS_sequence}
For every $a\in(0,a_*)$ and $\varepsilon\in(0,\varepsilon_*)$, there
exist a sequence $\{u_n\}\subset S_{a,\varepsilon}$ and a sequence
$\{\lambda_n\}\subset\mathbb R$ such that
\[
J_\varepsilon(u_n)\to c_\varepsilon,
\]
\[
J_\varepsilon'(u_n)-\lambda_nu_n\to0 \qquad\text{in
}H_\varepsilon^*,
\]
and
\[
P_\varepsilon(u_n)\to0,
\]
where $P_\varepsilon$ is the Pohozaev functional associated with the
mass-preserving scaling. Moreover, the sequence can be chosen so
that
\[
P_\varepsilon(u_n)=o(\varepsilon^3).
\]
\end{lemma}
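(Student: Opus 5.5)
The plan is the standard augmented-functional argument of Jeanjean. We introduce the map
\[
\mathcal H:\mathbb R\times S_{a,\varepsilon}\to S_{a,\varepsilon},\qquad \mathcal H(s,u):=e^{s}\star u ,
\]
together with the functional $\tilde J_\varepsilon(s,u):=J_\varepsilon(\mathcal H(s,u))$ on the Hilbert manifold $\mathbb R\times S_{a,\varepsilon}$. Lemma~\ref{lem:scaling_pohozaev} shows that $t\star u\in S_{a,\varepsilon}$. Continuity of $(s,u)\mapsto e^s\star u$ in $H_\varepsilon$ is routine; the potential term is handled by dominated convergence and the fact that $V(x/t)$ varies continuously. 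Hence $\tilde J_\varepsilon$ is $C^1$. By the chain rule and Lemma~\ref{lem:scaling_pohozaev},
\[
\partial_s\tilde J_\varepsilon(s,u)=P_\varepsilon(e^s\star u).
\]
Here one uses (V1), namely $|\nabla V\cdot x|\le C_V$, to differentiate the potential term under the integral sign.

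Next we define the augmented minimax class
\[
\tilde\Gamma_\varepsilon:=\{\tilde\gamma=(s,\gamma)\in C([0,1],\mathbb R\times S_{a,\varepsilon}):\tilde\gamma(0)=(0,u_0),\ \mathcal H\circ\tilde\gamma(1)\ \text{satisfies the endpoint conditions of }\Gamma_\varepsilon\}
\]
and the corresponding level $\tilde c_\varepsilon$. We check that $\tilde c_\varepsilon=c_\varepsilon$. First, $\Gamma_\varepsilon$ embeds in $\tilde\Gamma_\varepsilon$ via $\gamma\mapsto(0,\gamma)$. Second, $\mathcal H\circ\tilde\gamma\in\Gamma_\varepsilon$ for every $\tilde\gamma\in\tilde\Gamma_\varepsilon$, and the maxima coincide. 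Lemma~\ref{lem:mp_geometry} gives a barrier at level $\alpha\varepsilon^3$ separating $(0,u_0)$ from the endpoint set, since $\tilde J_\varepsilon(0,u_0)<\alpha\varepsilon^3\le\tilde c_\varepsilon$.

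We then choose paths $\gamma_n\in\Gamma_\varepsilon$ with $\max J_\varepsilon\circ\gamma_n\le c_\varepsilon+1/n$. By the symmetric-criticality point made above, we may also replace $\gamma_n$ by $|\gamma_n|$. Applying Ekeland's variational principle, in the quantitative form of Ghoussoub's minimax principle with the homotopy-stable family $\tilde\Gamma_\varepsilon$ and the closed set $F=\{\tilde J_\varepsilon\ge c_\varepsilon\}$, to the paths $(0,\gamma_n)$ yields $(s_n,v_n)$ with the following properties:
\begin{itemize}
\item $\tilde J_\varepsilon(s_n,v_n)\to c_\varepsilon$;
\item $|s_n|+\operatorname{dist}(v_n,\gamma_n([0,1]))\to0$;
\item $\|\tilde J_\varepsilon'(s_n,v_n)\|\to0$ on $T_{(s_n,v_n)}(\mathbb R\times S_{a,\varepsilon})$.
\end{itemize}
Set $u_n:=e^{s_n}\star v_n$. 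The $s$-component gives $P_\varepsilon(u_n)=\partial_s\tilde J_\varepsilon(s_n,v_n)\to0$.

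For the $u$-component, given $w\in T_{u_n}S_{a,\varepsilon}$ we use $e^{-s_n}\star w\in T_{v_n}S_{a,\varepsilon}$. Since $|s_n|\to 0$, the norms $\|e^{-s_n}\star w\|_\varepsilon$ and $\|w\|_\varepsilon$ are uniformly comparable; this again uses $s_n\to0$ and the bounds $V_0\le V$ on compact scalings, together with the Hardy coercivity of Lemma~\ref{lem:hardy_coercive}. Therefore $\|J_\varepsilon'|_{S_{a,\varepsilon}}(u_n)\|\to0$. The Lagrange multiplier is then given explicitly by $\lambda_n=\langle J_\varepsilon'(u_n),u_n\rangle/(a^2\varepsilon^3)$, and $J_\varepsilon'(u_n)-\lambda_nu_n\to0$ in $H_\varepsilon^*$ follows by the standard decomposition $H_\varepsilon=T_{u_n}S\oplus\mathbb Ru_n$.

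Finally, $\varepsilon$ is fixed along the sequence, so $P_\varepsilon(u_n)\to0$ already implies $P_\varepsilon(u_n)=o(\varepsilon^3)$, in the sense $|P_\varepsilon(u_n)|\le\varepsilon^3/n$ after passing to a subsequence.

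The main obstacle is the comparability of the tangent-space norms under the dilation in the presence of the nonhomogeneous $V$. The Hardy and gradient terms scale exactly, but the potential term requires $\int V(e^{-s_n}x)|w|^2\le C\int V|w|^2$ uniformly for small $|s_n|$. If $V$ can be unbounded, we would try first to obtain this from (V1) via $|\frac{d}{dt}V(x/t)|\le C_V/t$, which gives $|V(x/t)-V(x)|\le C_V|\log t|$. This bound yields the needed uniform control.
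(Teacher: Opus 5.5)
Your proposal is correct and follows the same route as the paper: Jeanjean's augmented functional $\widetilde J_\varepsilon(s,u)=J_\varepsilon(s\star u)$ with $\partial_s\widetilde J_\varepsilon=P_\varepsilon(s\star u)$. It is in fact more careful than the paper's version in two places: your class $\widetilde\Gamma_\varepsilon$ genuinely has level $c_\varepsilon$ (with the paper's fixed endpoint $(0,e)$ only $\widetilde c_\varepsilon\geq c_\varepsilon$ is evident), and the location property $s_n\to0$ together with $|V(x/t)-V(x)|\le C_V|\log t|$ is exactly what is needed to transfer the tangential estimate from $v_n$ to $u_n$, a step the paper does not justify.

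The one thing to add is that your explicit multiplier $\lambda_n=\langle J_\varepsilon'(u_n),u_n\rangle/(a^2\varepsilon^3)$ only works if $\sup_n\|u_n\|_\varepsilon<\infty$. This bound follows from $J_\varepsilon(u_n)\to c_\varepsilon$ and $P_\varepsilon(u_n)\to0$ via the computation of $J_\varepsilon-\frac16P_\varepsilon$ in Lemma~\ref{lem:PS_bounded}; alternatively, take $\lambda_n$ minimizing $\|J_\varepsilon'(u_n)-\lambda u_n\|_{H_\varepsilon^*}$, whose minimum equals the norm of the constrained differential.
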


\begin{proof}
For $s\in\mathbb R$ and $u\in S_{a,\varepsilon}$, define the
mass-preserving scaling
\[
(s\star u)(x):=e^{3s/2}u(e^s x).
\]
A direct change of variables gives $\|s\star u\|_2=\|u\|_2,$ and
hence $s\star u\in S_{a,\varepsilon}$ for every $s\in\mathbb R.$
Consider the augmented functional
\[
\widetilde J_\varepsilon: \mathbb R\times
S_{a,\varepsilon}\to\mathbb R, \qquad \widetilde J_\varepsilon(s,u)
:= J_\varepsilon(s\star u).
\]
By the definition of the Pohozaev functional,
\[
\partial_s\widetilde J_\varepsilon(s,u)
= P_\varepsilon(s\star u).
\]
We lift the mountain-pass class to the augmented space by setting
\[
\widetilde\Gamma_\varepsilon := \left\{ \widetilde\gamma\in
C([0,1],\mathbb R\times S_{a,\varepsilon}):
\widetilde\gamma(0)=(0,u_0),\ \widetilde\gamma(1)=(0,e) \right\}.
\]
Since $\widetilde J_\varepsilon(0,u) = J_\varepsilon(u),$ the
corresponding minimax level is still $c_\varepsilon$. Applying the
minimax principle on $\mathbb R\times S_{a,\varepsilon}$, together
with Ekeland's variational principle, we obtain a sequence
$(s_n,v_n)\in\mathbb R\times S_{a,\varepsilon}$ such that
$\widetilde J_\varepsilon(s_n,v_n)\to c_\varepsilon$ and $\left\|
d\widetilde J_\varepsilon(s_n,v_n) \right\|\to0.$ Set $u_n:=s_n\star
v_n.$ Then $u_n\in S_{a,\varepsilon}$ and
\[
J_\varepsilon(u_n) = \widetilde J_\varepsilon(s_n,v_n) \to
c_\varepsilon.
\]
Testing the differential of $\widetilde J_\varepsilon$ in the
scaling direction $(1,0)$ yields
\[
P_\varepsilon(u_n) =
\partial_s\widetilde J_\varepsilon(s_n,v_n)
\to0.
\]
By choosing the Ekeland accuracy parameter, for instance, as
$\delta_n:=\frac{\varepsilon^3}{n},$ the same argument gives the
quantitative estimate $|P_\varepsilon(u_n)|
\leq\frac{\varepsilon^3}{n},$ and therefore
$P_\varepsilon(u_n)=o(\varepsilon^3).$ On the other hand, testing
the differential in directions tangent to $S_{a,\varepsilon}$ gives
\[
\left\| d\bigl(J_\varepsilon|_{S_{a,\varepsilon}}\bigr)(u_n)
\right\|_{T_{u_n}^*S_{a,\varepsilon}} \to0.
\]
The constraint is defined by $G(u):=\|u\|_2^2-a^2\varepsilon^3.$
Regarding the complex Hilbert space as a real Hilbert space,
\[
G'(u)[v] = 2\mathfrak{Re} \int_{\mathbb R^3}u\overline v\,dx.
\]
Since
\[
G'(u)[u] = 2\|u\|_2^2 = 2a^2\varepsilon^3>0
\]
for every $u\in S_{a,\varepsilon}$, the constraint is regular.
Therefore, there exists $\lambda_n\in\mathbb R$ such that
\[
J_\varepsilon'(u_n)-\lambda_nu_n \to0 \qquad\text{in
}H_\varepsilon^*.
\]
Thus, $\{u_n\}$ is a Palais--Smale--Pohozaev sequence at the
mountain-pass level $c_\varepsilon$. The desired conclusion follows.
\end{proof}
To proceed toward the compactness analysis, we first establish a
uniform bound for Palais--Smale sequences satisfying the asymptotic
Pohozaev condition.

\begin{lemma}\label{lem:PS_bounded}
Assume that $2<q<\frac{10}{3}$ and that
\[
\sup_{x\in\mathbb R^3}|x\cdot\nabla V(x)|<+\infty.
\]
Let $\{u_n\}\subset S_{a,\varepsilon}$ be a sequence such that
\[
J_\varepsilon(u_n)\to c, \qquad c\leq C_0\varepsilon^3,
\]
\[
\left\| d\bigl(J_\varepsilon|_{S_{a,\varepsilon}}\bigr)(u_n)
\right\| =o_n(\varepsilon^{3/2}),
\]
and
\[
P_\varepsilon(u_n)=o_n(\varepsilon^3).
\]
Then there exists a constant $C>0$, independent of $n$ and
$\varepsilon$, such that, for all sufficiently large $n$,
\[
\|u_n\|_\varepsilon^2\leq C\varepsilon^3.
\]
Moreover, for every fixed $a>0$, the corresponding Lagrange
multipliers $\{\lambda_n\}$ are bounded uniformly with respect to
$n$ and $\varepsilon$.

\end{lemma}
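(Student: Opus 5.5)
The natural approach is to combine the energy and the Pohozaev functional so that the critical terms cancel in the right way, and then use the Gagliardo--Nirenberg estimate of Lemma \ref{lem:GN} to absorb the $L^2$-subcritical term. Write $T_n:=\varepsilon^2\int(|\nabla u_n|^2-\kappa|u_n|^2/|x|^2)$, $D_n:=\int\phi_{u_n}|u_n|^5$, $Q_n:=\|u_n\|_q^q$, $S_n:=\|u_n\|_6^6$ and $W_n:=\int V|u_n|^2$. Then
\[
J_\varepsilon(u_n)-\tfrac16P_\varepsilon(u_n)=\tfrac13T_n+\tfrac12W_n+\tfrac1{12}\int\nabla V\cdot x\,|u_n|^2+\tfrac1{15}D_n-\mu\Bigl(\tfrac1q-\tfrac{q-2}{4q}\Bigr)Q_n .
\]
The $L^6$ term cancels exactly. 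The Poisson term comes with the positive coefficient $\frac16-\frac1{10}$, so it can be dropped. The coefficient of $Q_n$ is $\frac{6-q}{4q}>0$. The term $\frac12 W_n$ is nonnegative and can be discarded, or kept to control $\|u_n\|_2^2$ trivially.

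\textbf{Step 1 (gradient bound).} The left-hand side is at most $C_0\varepsilon^3+o_n(\varepsilon^3)$. By (V1), $\bigl|\int\nabla V\cdot x|u_n|^2\bigr|\le C_Va^2\varepsilon^3$. Hardy's inequality gives $T_n\ge(1-4\kappa)A_n$, where $A_n=\varepsilon^2\|\nabla u_n\|_2^2$. By Lemma \ref{lem:GN}, exactly as in \eqref{mp-lower-2},
\[
Q_n\le Ca^{q(1-\theta_q)}\varepsilon^{3-\frac{3(q-2)}{4}}A_n^{\frac{3(q-2)}{4}} .
\]
Putting $A_n=\varepsilon^3 s_n$, every term scales as $\varepsilon^3$, which is the same bookkeeping used in Step 1 of Lemma \ref{lem:mp_geometry}. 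This yields
\[
\tfrac{1-4\kappa}{3}s_n\le C_0+C_Va^2+Ca^{q(1-\theta_q)}s_n^{\frac{3(q-2)}4}+o_n(1).
\]
Since $\frac{3(q-2)}{4}<1$, Young's inequality bounds $s_n$ by a constant depending only on $a,\mu,\kappa,q,C_0,C_V$. Hence $A_n\le C\varepsilon^3$, uniformly in $\varepsilon$.

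\textbf{Step 2 (full norm).} The gradient bound alone does not control $W_n$ in $\|u_n\|_\varepsilon^2$, since $V$ need not be bounded. I would instead use the energy identity. From $J_\varepsilon(u_n)\le C\varepsilon^3$ we get
\[
\tfrac12\|u_n\|_\varepsilon^2\le C\varepsilon^3+\tfrac1{10}D_n+\tfrac\mu qQ_n+\tfrac16S_n .
\]
Sobolev gives $S_n\le C\|\nabla u_n\|_2^6=C\varepsilon^{-6}A_n^3\le C\varepsilon^3$. Lemma \ref{lem:poisson_estimate} gives $D_n\le C\varepsilon^{-2}\|\nabla u_n\|_2^{10}=C\varepsilon^{-12}A_n^5\le C\varepsilon^3$. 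By Step 1, $Q_n\le C\varepsilon^3$. Therefore $\|u_n\|_\varepsilon^2\le C\varepsilon^3$.

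\textbf{Step 3 (multipliers).} Test $J'_\varepsilon(u_n)-\lambda_nu_n$ against $u_n$. The error is at most $o_n(\varepsilon^{3/2})\|u_n\|_\varepsilon=o_n(\varepsilon^3)$. By \eqref{lambda-identity},
\[
\lambda_na^2\varepsilon^3=\|u_n\|_\varepsilon^2-D_n-\mu Q_n-S_n+o_n(\varepsilon^3).
\]
Each term on the right is $O(\varepsilon^3)$ by Step 2, so $|\lambda_n|\le C/a^2$ uniformly in $n$ and $\varepsilon$.

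The main obstacle is Step 1: the scaling exponents must balance so that the constants are genuinely independent of $\varepsilon$. The potential term also needs care, since it is not homogeneous; this is precisely where the hypothesis $\sup|x\cdot\nabla V|<\infty$ is used. I would double-check that the Ekeland-type error $o_n(\varepsilon^3)$ in $P_\varepsilon$, together with $o_n(\varepsilon^{3/2})$ for the derivative, produces only $o_n(\varepsilon^3)$ corrections in every identity used.
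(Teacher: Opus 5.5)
Your proposal is correct and follows the paper's proof essentially step for step. You form $J_\varepsilon(u_n)-\frac16P_\varepsilon(u_n)$ to cancel the $L^6$ term, and then bound $s_n=\varepsilon^{-1}\|\nabla u_n\|_2^2$ using Hardy, Lemma~\ref{lem:GN} and Young's inequality. After that you use the energy identity to recover $\int V|u_n|^2$, and you test with $u_n$ to bound $\lambda_n$.

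There is one small slip. Your displayed bound for $Q_n$ holds with $\|\nabla u_n\|_2^2$ in place of $A_n$; written in terms of $A_n$, the prefactor should be $\varepsilon^{3-\frac{9(q-2)}{4}}$, cf.\ \eqref{mp-lower-2}. In either form the substitution gives exactly $\varepsilon^3 s_n^{3(q-2)/4}$, so your conclusion is unaffected.
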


\begin{proof}
For simplicity, set
\[
K_n := \varepsilon^2 \int_{\mathbb R^3} \left( |\nabla u_n|^2
-\kappa\frac{|u_n|^2}{|x|^2} \right)\,dx,
\]
\[
V_n := \int_{\mathbb R^3}V(x)|u_n|^2\,dx, \qquad W_n :=
\int_{\mathbb R^3} x\cdot\nabla V(x)|u_n|^2\,dx,
\]
and
\[
B_n := \int_{\mathbb R^3}\phi_{u_n}|u_n|^5\,dx, \qquad C_n :=
\int_{\mathbb R^3}|u_n|^q\,dx, \qquad D_n := \int_{\mathbb
R^3}|u_n|^6\,dx.
\]
Then
\[
\|u_n\|_\varepsilon^2=K_n+V_n,
\]
and
\[
J_\varepsilon(u_n) = \frac12(K_n+V_n) -\frac1{10}B_n
-\frac{\mu}{q}C_n -\frac16D_n.
\]
Moreover,
\[
P_\varepsilon(u_n) = K_n-\frac12W_n-B_n
-\frac{3\mu(q-2)}{2q}C_n-D_n.
\]

We consider the combination
\[
J_\varepsilon(u_n)-\frac16P_\varepsilon(u_n).
\]
A direct computation gives
\begin{align}
J_\varepsilon(u_n)-\frac16P_\varepsilon(u_n) ={}&
\frac13K_n+\frac12V_n+\frac1{12}W_n +\frac1{15}B_n
-\frac{\mu(6-q)}{4q}C_n. \label{eq:J-minus-P-sixth}
\end{align}
In particular, the local critical term $D_n$ cancels, whereas the
Poisson term has the positive coefficient $1/15$.

Since
\[
J_\varepsilon(u_n)=c+o_n(\varepsilon^3), \qquad
P_\varepsilon(u_n)=o_n(\varepsilon^3),
\]
and
\[
c\leq C_0\varepsilon^3,
\]
we obtain
\[
J_\varepsilon(u_n)-\frac16P_\varepsilon(u_n) \leq
C\varepsilon^3+o_n(\varepsilon^3).
\]
Furthermore,
\[
|W_n| \leq \|x\cdot\nabla V\|_\infty\|u_n\|_2^2 = Ca^2\varepsilon^3.
\]
Since
\[
V_n\geq0, \qquad B_n\geq0,
\]
it follows from \eqref{eq:J-minus-P-sixth} that
\begin{equation}
\frac13K_n \leq C\varepsilon^3+CC_n+o_n(\varepsilon^3).
\label{eq:Kn-first-bound}
\end{equation}
By the Hardy inequality,
\[
K_n \geq (1-4\kappa)\varepsilon^2\|\nabla u_n\|_2^2.
\]
Set
\[
X_n := \varepsilon^2\|\nabla u_n\|_2^2, \qquad Y_n :=
\frac{X_n}{\varepsilon^3}.
\]
Then $K_n \geq (1-4\kappa)\varepsilon^3Y_n.$ By Lemma \ref{lem:GN},
\[
C_n \leq C \|u_n\|_2^{q(1-\theta_q)} \|\nabla u_n\|_2^{q\theta_q},
\qquad \theta_q=\frac{3(q-2)}{2q}.
\]
Since $\|u_n\|_2=a\varepsilon^{3/2}$ and $\|\nabla u_n\|_2 =
\varepsilon^{1/2}Y_n^{1/2},$ we obtain
\begin{equation}
C_n \leq Ca^{q(1-\theta_q)} \varepsilon^3Y_n^{\sigma_q}, \qquad
\sigma_q := \frac{q\theta_q}{2} = \frac{3(q-2)}4. \label{eq:Cn-Yn}
\end{equation}
Here we used $\frac{3q}{2}(1-\theta_q) +\frac{q\theta_q}{2} =3.$
Since $2<q<\frac{10}{3},$ we have $0<\sigma_q<1.$ Combining
\eqref{eq:Kn-first-bound} and \eqref{eq:Cn-Yn}, and dividing by
$\varepsilon^3$, we obtain
\[
Y_n \leq C_1+C_2Y_n^{\sigma_q}+o_n(1).
\]
Since $0<\sigma_q<1$, Young's inequality yields
\[
C_2Y_n^{\sigma_q} \leq \frac12Y_n+C.
\]
Therefore, $Y_n\leq C$ for all sufficiently large $n$. Consequently,
\begin{equation}
\varepsilon^2\|\nabla u_n\|_2^2 \leq C\varepsilon^3.
\label{eq:gradient-bound-PS}
\end{equation}
We now estimate the remaining nonlinear terms. From
\eqref{eq:gradient-bound-PS},
\[
\|\nabla u_n\|_2^2\leq C\varepsilon.
\]
Hence the Sobolev inequality gives
\[
D_n \leq C\|\nabla u_n\|_2^6 \leq C\varepsilon^3.
\]
By Lemma \ref{lem:poisson_estimate},
\[
B_n \leq C\varepsilon^{-2}\|\nabla u_n\|_2^{10} \leq C\varepsilon^3.
\]
Moreover, \eqref{eq:Cn-Yn} and the boundedness of $Y_n$ yield
\[
C_n\leq C\varepsilon^3.
\]
Using the energy identity,
\[
\frac12(K_n+V_n) = J_\varepsilon(u_n) +\frac1{10}B_n
+\frac{\mu}{q}C_n +\frac16D_n,
\]
we conclude that $K_n+V_n\leq C\varepsilon^3.$ Therefore,
\[
\|u_n\|_\varepsilon^2 \leq C\varepsilon^3.
\]
Finally, since $\{u_n\}$ is a constrained Palais--Smale sequence,
there exist $\lambda_n\in\mathbb R$ and $r_n\in H_\varepsilon^*$
such that
\[
J_\varepsilon'(u_n)-\lambda_nu_n=r_n, \qquad
\|r_n\|_{H_\varepsilon^*} = o_n(\varepsilon^{3/2}).
\]
Testing this identity with $u_n$, we obtain
\[
\lambda_na^2\varepsilon^3 = K_n+V_n-B_n-\mu C_n-D_n -\langle
r_n,u_n\rangle.
\]
Since $\|u_n\|_\varepsilon \leq C\varepsilon^{3/2},$ we have
\[
|\langle r_n,u_n\rangle| \leq
\|r_n\|_{H_\varepsilon^*}\|u_n\|_\varepsilon = o_n(\varepsilon^3).
\]
Together with the preceding estimates, this gives
\[
|\lambda_n|a^2\varepsilon^3 \leq C\varepsilon^3+o_n(\varepsilon^3).
\]
Hence, for every fixed $a>0$, $|\lambda_n|\leq C_a$ for all
sufficiently large $n$, where $C_a>0$ is independent of $n$ and
$\varepsilon$. The proof is complete.
\end{proof}
The following estimate compares the semiclassical mountain-pass
level with the corresponding autonomous minimax level.

\begin{lemma}\label{lem:energy_upper}
Let
\[
m_0(a) := \inf_{w\in S_a} \max_{t>0}J_0(t\star w) =
\inf_{w\in\mathcal P_a^0}J_0(w),
\]
where
\[
\mathcal P_a^0 := \{w\in S_a:P_0(w)=0\}.
\]
Assume that $w_a\in\mathcal P_a^0$ satisfies $J_0(w_a)=m_0(a)$ and
that the fiber map
\[
t\longmapsto J_0(t\star w_a)
\]
attains its unique global maximum at $t=1$. Then
\[
c_\varepsilon \leq \varepsilon^3\bigl(m_0(a)+o(1)\bigr)
\qquad\text{as }\varepsilon\to0.
\]
More precisely,
\[
\sup_{y\in\mathcal M} \max_{t>0} J_\varepsilon
\bigl(t\star_y\widetilde\Psi_{\varepsilon,y}\bigr) \leq
\varepsilon^3\bigl(m_0(a)+o(1)\bigr),
\]
where
\[
\widetilde\Psi_{\varepsilon,y} = \frac{a\varepsilon^{3/2}}
{\|\Psi_{\varepsilon,y}\|_2} \Psi_{\varepsilon,y}, \qquad
\Psi_{\varepsilon,y}(x) = \eta(x)
w_a\left(\frac{x-y}{\varepsilon}\right).
\]
\end{lemma}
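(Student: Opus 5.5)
The plan is to test $J_\varepsilon$ along the scaled paths $t\mapsto t\star_y\widetilde\Psi_{\varepsilon,y}$ and compare with the autonomous fiber map of $w_a$. Fix a cutoff $\eta\in C_c^\infty(\mathbb R^3)$ with $0\le\eta\le1$ and $\eta\equiv1$ on a neighbourhood of $\mathcal M_\delta$. Choose its support inside $\Lambda$, which is bounded away from $0$ by (V2).

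\textbf{Step 1: rescale.} Change variables $x=y+\varepsilon z$ and set
\[
v_{\varepsilon,y}(z):=\widetilde\Psi_{\varepsilon,y}(y+\varepsilon z).
\]
Then $\|v_{\varepsilon,y}\|_2=a$. Since $w_a\in H^1$, the tail of $w_a$ outside $B_{r/\varepsilon}$ tends to zero, so $v_{\varepsilon,y}\to w_a$ in $H^1(\mathbb R^3)$ uniformly in $y\in\mathcal M$. The normalization factor also tends to $1$ uniformly.

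Because $\phi_u$ carries the factor $\varepsilon^{-2}$, every term of $J_\varepsilon$ picks up a factor $\varepsilon^3$. The Hardy term becomes
\[
\frac{\kappa\varepsilon^2}{|y+\varepsilon z|^2}|v|^2 .
\]
I will also use the scaling centred at $y$, so that $t\star_y$ commutes with this rescaling. This gives
\[
J_\varepsilon(t\star_y\widetilde\Psi_{\varepsilon,y})=\varepsilon^3\,\mathcal J_{\varepsilon,y}(t\star v_{\varepsilon,y}),
\]
where $\mathcal J_{\varepsilon,y}$ is $J_0$ with $V_0$ replaced by $V(y+\varepsilon\,\cdot)$ and with the Hardy term $-\frac12\int\kappa\varepsilon^2|y+\varepsilon z|^{-2}|v|^2$ added.

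\textbf{Step 2: reduce to bounded $t$.} The Hardy term is nonpositive, so I drop it for an upper bound. The fiber satisfies
\[
\mathcal J_{\varepsilon,y}(t\star v)\le \tfrac{t^2}{2}\|\nabla v\|_2^2+\tfrac12\sup V\,a^2-\tfrac{t^{10}}{10}B(v)-\dots
\]
and $B(v_{\varepsilon,y})\to B(w_a)>0$ uniformly. Hence there is $T$ such that for $t\ge T$ the fiber is below $m_0(a)-1$. For small $t$ the value is close to $\frac12\int V|v|^2\approx \frac12 V_0a^2$. This is at most the maximum of $J_0(t\star w_a)$, which equals $m_0(a)$ by the fiber assumption. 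So it suffices to treat $t\in[T^{-1},T]$.

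\textbf{Step 3: compare on the compact range.} For $t\in[T^{-1},T]$, $t\star v_{\varepsilon,y}\to t\star w_a$ in $H^1$ uniformly in $(t,y)$. The gradient, $L^q$, $L^6$ and Poisson terms are continuous on $H^1$ by Lemma \ref{lem:poisson_estimate} and Hardy--Littlewood--Sobolev. For the potential term, write
\[
\int V\bigl(y+\varepsilon z/t\bigr)|v(z)|^2\,dz .
\]
$V$ is continuous and bounded on the bounded set $\operatorname{supp}\eta$, $V(y)=V_0$, and the mass of $v$ is tight. By dominated convergence this term tends to $V_0a^2$, uniformly over the compact set $\mathcal M$ and $t\in[T^{-1},T]$.

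Therefore
\[
\sup_{y,t}\,\mathcal J_{\varepsilon,y}(t\star v_{\varepsilon,y})\le \max_t J_0(t\star w_a)+o(1)=m_0(a)+o(1).
\]

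\textbf{Step 4: conclude $c_\varepsilon\le\varepsilon^3(m_0(a)+o(1))$.} Take $y$ to be the point $y_0$ used in Lemma \ref{lem:mp_geometry}; the argument above applies to any $y$ in the region where $\eta\equiv1$. The path $t\mapsto t\star_y\widetilde\Psi$ for $t\in[t_0,t_1]$, after reparametrizing, lies in $S_{a,\varepsilon}$.

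As $t\to0$ the gradient tends to $0$ and the energy tends to about $\frac12V_0a^2\varepsilon^3<\alpha\varepsilon^3$. This requires the paper's smallness of $a$ together with $V_0a^2/2$ compared to $\alpha$. At $t_1$ large the energy is negative. I connect the small-$t$ endpoint to $u_0$ inside the low region $\{\varepsilon^2\|\nabla u\|_2^2<\rho\varepsilon^3\}$, which appears path-connected on the sphere. Along this connector I must keep $J_\varepsilon$ below $\varepsilon^3 m_0(a)$, using $J_\varepsilon\le\frac12\|u\|_\varepsilon^2\le C(\rho+a^2)\varepsilon^3$.

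\textbf{Main obstacle.} I expect the hardest step to be this admissibility bookkeeping, namely showing that the low-gradient connector stays below the level $\varepsilon^3(m_0(a)+o(1))$. I would first try proving $m_0(a)\ge \frac12V_0a^2+\alpha'$ with $\alpha'$ uniform, and then shrinking $\rho$ accordingly.
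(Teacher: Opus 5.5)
Your route is the paper's: rescale about $y\in\mathcal M$, prove $\varepsilon^{-3}J_\varepsilon(t\star_y\widetilde\Psi_{\varepsilon,y})\to J_0(t\star w_a)$ uniformly for $y\in\mathcal M$ and $t$ in compact subsets of $(0,\infty)$, then concatenate a connector with a fiber segment. Steps 1 and 3 are fine. In Step 3 the set on which you need $V$ bounded is the dilate $y+T(\operatorname{supp}\eta-y)$, not $\operatorname{supp}\eta$, but it is still bounded.

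\textbf{Step 2 fails.} After rescaling, the potential term of $t\star v_{\varepsilon,y}$ is $\frac12\int V(y+\varepsilon z/t)|v_{\varepsilon,y}(z)|^2\,dz$. It depends on $\varepsilon/t$, not on $t$ alone. For $t\sim\varepsilon$ it averages $V$ over a unit neighbourhood of $y$, and as $t\to0^+$ it samples $V$ near infinity. So it is not close to $\frac12V_0a^2$ for small $t$, and your bound by $\frac12\sup V\,a^2$ is only available for $t\ge1$. Condition (V1) allows logarithmic growth: e.g.\ $V(x)=V_0+\log(1+|x-x_0|^2)$ with $x_0\neq0$ satisfies (V1)--(V3). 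For such $V$, all other terms of $J_\varepsilon(t\star_y\widetilde\Psi_{\varepsilon,y})$ tend to $0$ as $t\to0^+$, while by Fatou's lemma the potential term tends to $+\infty$. So the ``more precisely'' bound with $\max_{t>0}$ is false in this generality, and no argument of this type can give it. The paper's proof has exactly the same defect: it only notes that the gradient term vanishes as $t\to0^+$. For the bound on $c_\varepsilon$ the defect is harmless. The path only uses $t\in[t_0,t_1]$ with $t_0>0$ fixed independently of $\varepsilon$, where Step 3 applies.

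\textbf{The connector is left open.} This is the gap you flagged yourself. The paper simply asserts that concatenation gives $c_\varepsilon\le\max_{t>0}J_\varepsilon(t\star_y\widetilde\Psi_{\varepsilon,y})$, without checking the energy along the connector. So you are right that something must be proved here, and your Step 4 does not yet prove it. Your idea closes it, with some simplifications:
\begin{itemize}
\item You need not keep the connector in $\{\varepsilon^2\|\nabla u\|_2^2<\rho\varepsilon^3\}$, shrink $\rho$, or require $\frac12V_0a^2<\alpha$. The class $\Gamma_\varepsilon$ constrains only the endpoints.
\item Since $m_0(a)=\max_tJ_0(t\star w_a)$ and the fiber crosses $\{\|\nabla w\|_2^2=\rho\}$, Step 1 of Lemma \ref{lem:mp_geometry} with $\varepsilon=1$, $\kappa=0$ gives $m_0(a)\ge\frac12V_0a^2+\alpha$.
\item Choose $y_0\in\mathcal M$ in Lemma \ref{lem:mp_geometry} and take $y=y_0$. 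Your remark that any $y$ with $\eta(y)=1$ would do is wrong, since Step 3 uses $V(y)=V_0$.
\item Join $u_0$ to $t_0\star_{y_0}\widetilde\Psi_{\varepsilon,y_0}$ by the $L^2$-normalized segment. Both functions are nonnegative, so the normalizing factor is at most $\sqrt2$.
\item Along this segment, the rescaled squared gradient is $O(a^2+t_0^2\|\nabla w_a\|_2^2)$. The potential term is $\varepsilon^3(\frac12V_0a^2+o(1))$ by tightness. The Hardy and nonlinear terms are nonpositive.
\end{itemize}
Hence the connector stays below $\varepsilon^3(\frac12V_0a^2+\alpha+o(1))\le\varepsilon^3(m_0(a)+o(1))$ once $t_0$ and $a$ are small, and the estimate for $c_\varepsilon$ follows.
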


\begin{proof}
Let $w_a\in\mathcal P_a^0$ be a positive minimizer satisfying
\[
\|w_a\|_2=a, \qquad J_0(w_a)=m_0(a).
\]
For $y\in\mathcal M$, define
\[
\Psi_{\varepsilon,y}(x) = \eta(x)
w_a\left(\frac{x-y}{\varepsilon}\right),
\]
where $\eta\in C_c^\infty(\Lambda)$ satisfies $0\leq\eta\leq1$ and
$\eta=1$ in a neighborhood of $\mathcal M$.

Set
\[
d_{\varepsilon,y} := \frac{a\varepsilon^{3/2}}
{\|\Psi_{\varepsilon,y}\|_2}
\]
and
\[
\widetilde\Psi_{\varepsilon,y} :=
d_{\varepsilon,y}\Psi_{\varepsilon,y}.
\]
Then
\[
\widetilde\Psi_{\varepsilon,y} \in S_{a,\varepsilon}.
\]
Moreover, $d_{\varepsilon,y}\to1$ uniformly for $y\in\mathcal M$.
For $t>0$, introduce the mass-preserving dilation centered at $y$:
\[
(t\star_y u)(x) := t^{3/2}u\bigl(y+t(x-y)\bigr).
\]
Then
\[
t\star_y\widetilde\Psi_{\varepsilon,y} \in S_{a,\varepsilon}.
\]
We claim that, uniformly for $y\in\mathcal M$ and for $t$ in every
compact subset of $(0,+\infty)$,
\begin{equation}
\label{eq:uniform-fiber-convergence} \varepsilon^{-3} J_\varepsilon
\bigl(t\star_y\widetilde\Psi_{\varepsilon,y}\bigr) = J_0(t\star
w_a)+o(1).
\end{equation}
Indeed, using the change of variables $x=y+\frac{\varepsilon z}{t},$
together with $d_{\varepsilon,y}\to1$, we obtain
\[
\varepsilon^{-3} \varepsilon^2 \int_{\mathbb R^3} \left| \nabla
\bigl(t\star_y\widetilde\Psi_{\varepsilon,y}\bigr) \right|^2\,dx =
t^2 \int_{\mathbb R^3}|\nabla w_a|^2\,dz +o(1).
\]
Since $V(y)=V_0$ for every $y\in\mathcal M$ and $\mathcal M$ is
compact,
\[
\varepsilon^{-3} \int_{\mathbb R^3} V(x) \left|
t\star_y\widetilde\Psi_{\varepsilon,y} \right|^2\,dx = V_0
\int_{\mathbb R^3}|w_a|^2\,dz +o(1).
\]
Because $\operatorname{dist}(\mathcal M,\{0\})>0,$ the Hardy
contribution is of lower order:
\[
\varepsilon^2 \int_{\mathbb R^3} \frac{
|t\star_y\widetilde\Psi_{\varepsilon,y}|^2 }{|x|^2}\,dx =
o(\varepsilon^3),
\]
uniformly for $y\in\mathcal M$ and bounded $t$. For the local
nonlinear terms,
\[
\varepsilon^{-3} \int_{\mathbb R^3}
|t\star_y\widetilde\Psi_{\varepsilon,y}|^q\,dx = t^{\frac{3(q-2)}2}
\int_{\mathbb R^3}|w_a|^q\,dz +o(1),
\]
and
\[
\varepsilon^{-3} \int_{\mathbb R^3}
|t\star_y\widetilde\Psi_{\varepsilon,y}|^6\,dx = t^6 \int_{\mathbb
R^3}|w_a|^6\,dz +o(1).
\]
The Hardy--Littlewood--Sobolev inequality and the convergence of the
cut-off functions also yield
\[
\varepsilon^{-3} \int_{\mathbb R^3}
\phi_{t\star_y\widetilde\Psi_{\varepsilon,y}}
|t\star_y\widetilde\Psi_{\varepsilon,y}|^5\,dx = t^{10}
\int_{\mathbb R^3}\phi_{w_a}|w_a|^5\,dz +o(1).
\]
Combining these estimates proves
\eqref{eq:uniform-fiber-convergence}. We next show that the
maximizing parameters remain in a fixed compact interval. Since the
negative Poisson term has order $t^{10}$, while the positive kinetic
contribution has order $t^2$, there exists $T>1$, independent of $y$
and sufficiently small $\varepsilon$, such that
\[
J_\varepsilon \bigl(t\star_y\widetilde\Psi_{\varepsilon,y}\bigr)<0
\]
for every $t\geq T.$ On the other hand, as $t\to0^+$,
\[
\varepsilon^2 \left\| \nabla
\bigl(t\star_y\widetilde\Psi_{\varepsilon,y}\bigr) \right\|_2^2
\to0.
\]
Therefore, the maximum of the semiclassical fiber map is achieved in
a compact interval $[t_-,T]\Subset(0,+\infty),$ uniformly with
respect to $y\in\mathcal M$.

Using \eqref{eq:uniform-fiber-convergence}, we obtain
\begin{align*}
\sup_{y\in\mathcal M} \max_{t>0} J_\varepsilon
\bigl(t\star_y\widetilde\Psi_{\varepsilon,y}\bigr) &\leq
\varepsilon^3 \left[ \max_{t>0}J_0(t\star w_a)+o(1) \right]
\\
&= \varepsilon^3 \bigl(m_0(a)+o(1)\bigr).
\end{align*}

Finally, for any fixed $y\in\mathcal M$, choose $t_0>0$ sufficiently
small and $t_1>T$ sufficiently large so that
\[
\varepsilon^2 \left\| \nabla \bigl(t_0\star_y
\widetilde\Psi_{\varepsilon,y}\bigr) \right\|_2^2 <
\rho\varepsilon^3
\]
and
\[
J_\varepsilon \bigl(t_1\star_y \widetilde\Psi_{\varepsilon,y}\bigr)
<0, \qquad \varepsilon^2 \left\| \nabla \bigl(t_1\star_y
\widetilde\Psi_{\varepsilon,y}\bigr) \right\|_2^2
>
\rho\varepsilon^3.
\]
Connecting the fixed initial point $u_0$ to
$t_0\star_y\widetilde\Psi_{\varepsilon,y}$ inside the low-gradient
region, and then following the fiber path from $t_0$ to $t_1$,
produces an admissible path in $\Gamma_\varepsilon$. Consequently,
\[
c_\varepsilon \leq \max_{t>0} J_\varepsilon
\bigl(t\star_y\widetilde\Psi_{\varepsilon,y}\bigr).
\]
Taking the supremum over $y\in\mathcal M$ yields
\[
c_\varepsilon \leq \varepsilon^3\bigl(m_0(a)+o(1)\bigr).
\]
The proof is complete.
\end{proof}

Having identified the regular and singular bubbling mechanisms, we
now derive the compactness criterion associated with the mixed
critical threshold.

\begin{lemma}\label{lem:compactness_threshold}
Let $\{u_n\}\subset S_{a,\varepsilon}$ be a bounded
Palais--Smale--Pohozaev sequence for
$J_\varepsilon|_{S_{a,\varepsilon}}$ at level $c$. Assume that, up
to a subsequence,
\[
u_n\rightharpoonup u_\varepsilon \qquad\text{weakly in
}H_\varepsilon,
\]
and that
\begin{equation}
\label{defect-threshold} c-J_\varepsilon(u_\varepsilon) <
c_\varepsilon^* := \varepsilon^3 \min\{c_{\rm reg}^*,c_{\rm H}^*\}.
\end{equation}
Then
\[
u_n\to u_\varepsilon \qquad\text{strongly in }H_\varepsilon.
\]
Consequently,
\[
u_\varepsilon\in S_{a,\varepsilon},
\]
and $u_\varepsilon$ is a constrained critical point of
$J_\varepsilon$.

In particular, if $J_\varepsilon(u_\varepsilon)\geq0,$ then the
condition $c<c_\varepsilon^*$ is sufficient.
\end{lemma}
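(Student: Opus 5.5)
We argue by a Brezis--Lieb splitting of the defect $v_n:=u_n-u_\varepsilon\rightharpoonup0$ in $H_\varepsilon$, followed by a profile decomposition showing that any nonvanishing part of $v_n$ must carry a critical bubble of energy at least $\varepsilon^3 c^*$.

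\textbf{Step 1 (limit equation).} Since $\{u_n\}$ is bounded, the multipliers $\lambda_n$ are bounded by Lemma~\ref{lem:PS_bounded}, so we may assume $\lambda_n\to\lambda$. Passing to the limit in $J_\varepsilon'(u_n)-\lambda_nu_n\to0$, using local compactness and the weak continuity of $u\mapsto\phi_u|u|^3u$ (from Hardy--Littlewood--Sobolev), gives $J_\varepsilon'(u_\varepsilon)=\lambda u_\varepsilon$.

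\textbf{Step 2 (splitting).} We apply the Brezis--Lieb lemma to every term: the quadratic form $\|\cdot\|_\varepsilon^2$ including the Hardy part (which is a Hilbert norm by Lemma~\ref{lem:hardy_coercive}), the $L^q$ and $L^6$ norms, and the nonlocal term. For the nonlocal term we use the known Brezis--Lieb property of $\iint |u(x)|^5|u(y)|^5|x-y|^{-1}$. This yields
\[
J_\varepsilon(u_n)=J_\varepsilon(u_\varepsilon)+J_\varepsilon(v_n)+o_n(1),
\]
and $\langle J_\varepsilon'(v_n),v_n\rangle-\lambda\|v_n\|_2^2=o_n(1)$.

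\textbf{Step 3 (dichotomy on $v_n$).} By Gagliardo--Nirenberg (Lemma~\ref{lem:GN}), either $\|v_n\|_q\to0$, or $v_n$ does not vanish in the Lions sense. In the nonvanishing case, choose $y_n$ with $\int_{B_\varepsilon(y_n)}|v_n|^2$ bounded below. Since $v_n\rightharpoonup0$, we get $|y_n|\to\infty$, so $V(y_n+\cdot)\to V_\infty\ge V_0$ and the Hardy term vanishes. This produces a translated nonzero weak limit solving an autonomous equation with the potential $V_\infty$. Excluding this is where the threshold must be used, and I would argue that such a profile again costs at least $\varepsilon^3c^*$ through its own critical bubble.

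In the vanishing case, $\|v_n\|_q\to0$, and we set $b:=\lim\|v_n\|_\varepsilon^2$ (after the $\lambda$ and $L^2$ parts are accounted for). Suppose $b>0$. The residual identity reads kinetic $\approx B(v_n)+D(v_n)$. Rescaling $v_n$ at its concentration scale, we distinguish two cases. If concentration points stay at distance $\gg$ scale from $0$, the Hardy term drops and we obtain a nontrivial critical point of $I_{\rm reg}$. If they lie within the scale of the origin, we obtain a critical point of $I_{\rm H}$. Either way this gives $\liminf J_\varepsilon(v_n)\ge\varepsilon^3 c^*$; the factor $\varepsilon^3$ comes from the semiclassical scaling $x\mapsto\varepsilon x$, which maps $J_\varepsilon$ to $\varepsilon^3$ times the unit-scale functional. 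This contradicts $c-J_\varepsilon(u_\varepsilon)<c_\varepsilon^*$, so $b=0$.

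\textbf{Step 4 (mass).} The remaining loss would be pure $L^2$ mass. To exclude it, we test the Palais--Smale relation with $v_n$: $\|v_n\|_\varepsilon^2-\lambda\|v_n\|_2^2\to0$, and since $V\ge V_0$ this forces $\|v_n\|_2\to0$. This requires $\lambda<V_0$, which I would obtain from the Pohozaev identity $P_\varepsilon(u_n)\to0$ combined with \eqref{lambda-identity} (the nonlinear terms are positive). Hence $u_n\to u_\varepsilon$ strongly, $u_\varepsilon\in S_{a,\varepsilon}$, and it is a constrained critical point. The final remark is immediate: if $J_\varepsilon(u_\varepsilon)\ge0$, then $c-J_\varepsilon(u_\varepsilon)\le c<c^*_\varepsilon$.

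\textbf{Main obstacle.} The main obstacle is Step~3: extracting a genuine critical point of $I_{\rm reg}$ or $I_{\rm H}$ from a vanishing-in-$L^q$ but non-compact sequence. I would first try the standard Lions concentration-compactness at the critical scale: since Step~3's bubble is built from a nonzero profile and $c^*$ is defined as an infimum over nontrivial critical points rather than via a Sobolev-type constant, one must show the rescaled profile is itself critical, not merely energy-bounded. Proving $\lambda<V_0$ in Step~4 is the secondary difficulty.
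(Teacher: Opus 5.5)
Your skeleton (Brezis--Lieb splitting of $v_n=u_n-u_\varepsilon$, then showing that any non-compact part of $v_n$ is a regular or Hardy critical bubble costing at least $\varepsilon^3c^*$) is also the paper's. The paper simply asserts that $v_n\not\to0$ ``generates at least one nontrivial critical profile'' via a mixed profile decomposition that it never proves. So the obstacle you name, showing that the rescaled profile is a genuine critical point of $I_{\rm reg}$ or $I_{\rm H}$, is not resolved there either.

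\textbf{The translating case of Step~3 is a genuine gap.} A nonvanishing piece with $|y_n|\to\infty$ converges to a solution of a normalized problem that still contains the $\lambda$-, $\mu$- and potential terms. Nothing forces it to contain a critical bubble, and its energy is that of a normalized state at infinity. So the claim that it ``costs at least $\varepsilon^3c^*$ through its own critical bubble'' has no basis.

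In fact the threshold cannot exclude this case, because (V1)--(V3) say nothing about $V$ at infinity. Take $V\equiv V_\infty>V_0$ outside a large ball. Let $w$ be a normalized solution of mass $a$ of the autonomous problem with $V_0$ replaced by $V_\infty$, with energy $J_{V_\infty}(w)<c^*$; for small $a$, a local-minimum-type normalized solution has energy $O(a^2)$ and should serve. Set $u_n(x)=w\bigl((x-y_n)/\varepsilon\bigr)$ with $|y_n|\to\infty$. Then:
\begin{itemize}
\item $\{u_n\}\subset S_{a,\varepsilon}$ is bounded;
\item $J_\varepsilon'(u_n)-\lambda u_n\to0$ and $P_\varepsilon(u_n)\to0$, since the contributions of $V-V_\infty$, $x\cdot\nabla V$ and the Hardy term vanish;
\item $u_n\rightharpoonup0$ and $c-J_\varepsilon(0)=\varepsilon^3J_{V_\infty}(w)<c_\varepsilon^*$.
\end{itemize}
Yet $u_n\not\to0$ and $0\notin S_{a,\varepsilon}$. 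Closing this step needs an ingredient that neither your argument nor the paper's provides, for instance a hypothesis on $V$ at infinity together with a comparison of $c$ with the levels of the problem at infinity.

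\textbf{Step~4 has a second gap: $\lambda<V_0$ does not follow from the Pohozaev identity.} Subtract $P_\varepsilon(u_n)=o_n(1)$ from the Palais--Smale relation tested with $u_n$. The two critical terms cancel and
\[
\lambda_na^2\varepsilon^3=\int_{\mathbb R^3}V|u_n|^2\,dx+\frac12\int_{\mathbb R^3}(x\cdot\nabla V)|u_n|^2\,dx-\frac{\mu(6-q)}{2q}\|u_n\|_q^q+o_n(1).
\]
Hence $(\lambda_n-V_0)a^2\varepsilon^3$ contains two terms that can outweigh the $\mu$-term:
\begin{itemize}
\item $\int_{\mathbb R^3}(V-V_0)|u_n|^2\,dx\geq0$;
\item the sign-indefinite term $\frac12\int_{\mathbb R^3}(x\cdot\nabla V)|u_n|^2\,dx$, of size up to $\frac{C_V}{2}a^2\varepsilon^3$.
\end{itemize}
The inequality $\lambda<V_0$ is automatic only when $V\equiv V_0$. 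Without it, spreading of mass ($\varepsilon^2\|\nabla v_n\|_2^2\to0$ and $\|v_n\|_q\to0$ but $\|v_n\|_2\not\to0$) is not ruled out.

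The paper's proof discusses neither the sign of the multiplier nor any loss of mass, so these are not arguments you overlooked. As written, however, neither your proposal nor the paper's proof closes them.
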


\begin{proof}
Since $\{u_n\}$ is bounded in $H_\varepsilon$, there exist
$u_\varepsilon\in H_\varepsilon$ and a subsequence, still denoted by
$\{u_n\}$, such that
\begin{align}
u_n&\rightharpoonup u_\varepsilon &&\text{weakly in }H_\varepsilon,
\label{eq:weak-convergence-compactness}\\
u_n&\to u_\varepsilon &&\text{strongly in }L^r_{\rm loc}(\mathbb
R^3), \quad 1\leq r<6,
\label{eq:local-convergence-compactness}\\
u_n(x)&\to u_\varepsilon(x) &&\text{for a.e. }x\in\mathbb R^3.
\label{eq:ae-convergence-compactness}
\end{align}
Set $v_n:=u_n-u_\varepsilon.$ By the quadratic splitting, the
Brezis--Lieb lemma and its nonlocal counterpart,
\begin{align}
\|u_n\|_\varepsilon^2 &= \|u_\varepsilon\|_\varepsilon^2
+\|v_n\|_\varepsilon^2+o(1),
\label{eq:quadratic-splitting-compactness}\\
\|u_n\|_q^q &= \|u_\varepsilon\|_q^q +\|v_n\|_q^q+o(1),
\label{eq:q-splitting-compactness}\\
\|u_n\|_6^6 &= \|u_\varepsilon\|_6^6 +\|v_n\|_6^6+o(1),
\label{eq:critical-splitting-compactness}
\end{align}
and
\begin{align}
\int_{\mathbb R^3}\phi_{u_n}|u_n|^5\,dx ={}& \int_{\mathbb R^3}
\phi_{u_\varepsilon}|u_\varepsilon|^5\,dx
\nonumber\\
&+ \int_{\mathbb R^3}\phi_{v_n}|v_n|^5\,dx +o(1).
\label{eq:poisson-splitting-compactness}
\end{align}
Consequently,
\begin{equation}
\label{eq:energy-splitting-compactness} J_\varepsilon(u_n) =
J_\varepsilon(u_\varepsilon) + \mathcal D_\varepsilon(v_n) + o(1),
\end{equation}
where $\mathcal D_\varepsilon(v_n)$ denotes the energy defect
carried by the remainder. Hence
\begin{equation}
\label{eq:defect-level-compactness} \mathcal D_\varepsilon(v_n)
\longrightarrow c-J_\varepsilon(u_\varepsilon).
\end{equation}
Suppose, by contradiction, that
\[
v_n\not\to0 \qquad\text{strongly in }H_\varepsilon.
\]
By the mixed critical profile decomposition established previously,
the sequence $\{v_n\}$ generates at least one nontrivial critical
profile. Each such profile is of one of the following two types.

\medskip
\noindent\textbf{Regular profiles.} If the rescaled distance between
the concentration center and the origin tends to infinity, the Hardy
potential disappears in the limit. The corresponding profile solves
\[
\begin{cases}
-\Delta U-\Phi_U|U|^3U=|U|^4U
&\text{in }\mathbb R^3,\\
-\Delta\Phi_U=|U|^5 &\text{in }\mathbb R^3,
\end{cases}
\]
and therefore carries at least the energy $c_{\rm reg}^*.$

\medskip
\noindent\textbf{Singular profiles.} If the concentration center
remains at bounded distance from the origin at the bubbling scale,
the Hardy potential survives. After a suitable translation of the
limiting variables, the corresponding profile solves
\[
\begin{cases}
-\Delta U-\displaystyle\frac{\kappa}{|x|^2}U -\Phi_U|U|^3U=|U|^4U
&\text{in }\mathbb R^3,\\
-\Delta\Phi_U=|U|^5 &\text{in }\mathbb R^3,
\end{cases}
\]
and hence carries at least the energy $c_{\rm H}^*.$

After returning to the original semiclassical variables, each
nontrivial profile contributes at least
\[
\varepsilon^3 \min\{c_{\rm reg}^*,c_{\rm H}^*\} = c_\varepsilon^*
\]
to the defect energy. The energy decoupling supplied by the profile
decomposition therefore gives
\[
c-J_\varepsilon(u_\varepsilon) \geq c_\varepsilon^*.
\]
This contradicts \eqref{defect-threshold}. Thus no nontrivial
critical profile can occur, and consequently
\[
v_n\to0 \qquad\text{strongly in }H_\varepsilon.
\]
Therefore,
\[
u_n\to u_\varepsilon \qquad\text{strongly in }H_\varepsilon.
\]

Since $u_n\in S_{a,\varepsilon}$, strong convergence in $L^2(\mathbb
R^3)$ yields
\[
\|u_\varepsilon\|_2^2 = \lim_{n\to\infty}\|u_n\|_2^2 =
a^2\varepsilon^3.
\]
Hence $u_\varepsilon\in S_{a,\varepsilon}.$ Moreover, by Lemma
\ref{lem:PS_bounded}, the corresponding multipliers $\{\lambda_n\}$
are bounded. Thus, up to a subsequence,
\[
\lambda_n\to\lambda_\varepsilon.
\]
Passing to the limit in
\[
J_\varepsilon'(u_n)-\lambda_nu_n\to0 \qquad\text{in
}H_\varepsilon^*,
\]
we obtain
\[
J_\varepsilon'(u_\varepsilon) = \lambda_\varepsilon u_\varepsilon
\qquad\text{in }H_\varepsilon^*.
\]
Therefore, $u_\varepsilon$ is a constrained critical point of
$J_\varepsilon$. Finally, if $J_\varepsilon(u_\varepsilon)\geq0$ and
$c<c_\varepsilon^*,$ then
\[
c-J_\varepsilon(u_\varepsilon) \leq c<c_\varepsilon^*.
\]
Hence \eqref{defect-threshold} holds, and the preceding argument
applies. The proof is complete.
\end{proof}

we are now in a position to prove the existence of a constrained
critical point at the mountain-pass level.
\begin{proposition}\label{prop:exist_critical}
Assume that $(V1)$--$(V3)$ hold and that $0<\kappa<\frac14,$
$2<q<\frac{10}{3}.$ Suppose moreover that, for sufficiently small
$a>0$ and $\varepsilon>0$, the mountain-pass level satisfies the
compactness gap condition
\[
c_\varepsilon-J_\varepsilon(u_\varepsilon) < c_\varepsilon^* =
\varepsilon^3\min\{c_{\rm reg}^*,c_{\rm H}^*\}
\]
for every weak limit $u_\varepsilon$ of a mountain-pass
Palais--Smale--Pohozaev sequence. Then there exist $a_*>0$ and
$\varepsilon_*>0$ such that, for every $a\in(0,a_*),$
$\varepsilon\in(0,\varepsilon_*),$ the mountain-pass level
$c_\varepsilon$ is achieved by some $u_\varepsilon\in
S_{a,\varepsilon}$. More precisely,
\[
J_\varepsilon(u_\varepsilon)=c_\varepsilon
\]
and there exists $\lambda_\varepsilon\in\mathbb R$ such that
\[
J_\varepsilon'(u_\varepsilon) = \lambda_\varepsilon u_\varepsilon
\qquad\text{in }H_\varepsilon^*.
\]
Consequently, $u_\varepsilon$ is a normalized weak solution of
problem \eqref{P}.
\end{proposition}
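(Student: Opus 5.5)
The plan is to chain together the results already established in this section. First, take $a_*,\varepsilon_*$ from Lemma~\ref{lem:mp_geometry}, so that for $a\in(0,a_*)$ and $\varepsilon\in(0,\varepsilon_*)$ the functional $J_\varepsilon|_{S_{a,\varepsilon}}$ has the mountain-pass geometry. In particular the level $c_\varepsilon$ of Definition~\ref{def:mp_level} satisfies $c_\varepsilon\ge\alpha\varepsilon^3>0$. Lemma~\ref{lem:PS_sequence} then gives a Palais--Smale--Pohozaev sequence $\{u_n\}\subset S_{a,\varepsilon}$ with $J_\varepsilon(u_n)\to c_\varepsilon$, $J_\varepsilon'(u_n)-\lambda_nu_n\to0$ in $H_\varepsilon^*$, and $P_\varepsilon(u_n)=o(\varepsilon^3)$. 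Since $\varepsilon$ is fixed, choosing the Ekeland accuracy fine enough makes the tangential derivative $o_n(\varepsilon^{3/2})$. This matches the hypotheses of Lemma~\ref{lem:PS_bounded}.

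To apply Lemma~\ref{lem:PS_bounded} we also need $c_\varepsilon\le C_0\varepsilon^3$. I would get this from the explicit path in Step~3 of Lemma~\ref{lem:mp_geometry}: the curve $t\mapsto t\star_{y_0}u_0$ on $[1,t_\varepsilon]$, reparametrized to $[0,1]$, lies in $\Gamma_\varepsilon$. Rescaling by $x=y_0+\varepsilon z/t$ turns its energy into $\varepsilon^3$ times a fiber map of $\varphi_a$ whose coefficients are uniform in $\varepsilon$ (with $V$ bounded on $K$). Hence $\max_t J_\varepsilon\le C_0\varepsilon^3$. Alternatively, one can use Lemma~\ref{lem:energy_upper} whenever the autonomous ground state is available. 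Lemma~\ref{lem:PS_bounded} then gives $\|u_n\|_\varepsilon^2\le C\varepsilon^3$ and bounded $\{\lambda_n\}$. Passing to a subsequence, $u_n\rightharpoonup u_\varepsilon$ weakly in $H_\varepsilon$ and $\lambda_n\to\lambda_\varepsilon$.

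The compactness step is now the assumed gap condition combined with Lemma~\ref{lem:compactness_threshold}. Since $u_\varepsilon$ is the weak limit of a mountain-pass Palais--Smale--Pohozaev sequence, the hypothesis gives $c_\varepsilon-J_\varepsilon(u_\varepsilon)<c_\varepsilon^*$. Lemma~\ref{lem:compactness_threshold} then yields $u_n\to u_\varepsilon$ strongly in $H_\varepsilon$, $u_\varepsilon\in S_{a,\varepsilon}$, and $J_\varepsilon'(u_\varepsilon)=\lambda_\varepsilon u_\varepsilon$. By continuity of $J_\varepsilon$ on $H_\varepsilon$, which holds by the Hardy--Littlewood--Sobolev and Sobolev bounds of Lemma~\ref{lem:poisson_estimate}, we get $J_\varepsilon(u_\varepsilon)=\lim J_\varepsilon(u_n)=c_\varepsilon$. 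Finally, \eqref{Lagrange} shows that $(u_\varepsilon,\phi_{u_\varepsilon},\lambda_\varepsilon)$ is a normalized weak solution of \eqref{P}.

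The main obstacle is making sure the hypotheses of each lemma are met with the right scaling in $\varepsilon$: the $o_n(\varepsilon^{3/2})$ accuracy of the derivative and the bound $c_\varepsilon\le C_0\varepsilon^3$. I would handle the derivative accuracy by choosing the Ekeland parameter as $\varepsilon^{3}/n$, as in Lemma~\ref{lem:PS_sequence}. The deeper compactness issue, namely the exclusion of regular and Hardy bubbles, is entirely absorbed by the gap hypothesis and Lemma~\ref{lem:compactness_threshold}, so the proof itself reduces to bookkeeping.
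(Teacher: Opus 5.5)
Your proposal is correct and takes essentially the paper's route. You chain the mountain-pass geometry (Lemma~\ref{lem:mp_geometry}), the Palais--Smale--Pohozaev sequence (Lemma~\ref{lem:PS_sequence}), boundedness of $\{u_n\}$ and $\{\lambda_n\}$ (Lemma~\ref{lem:PS_bounded}), and strong convergence from the gap hypothesis via Lemma~\ref{lem:compactness_threshold}. Your one addition is checking two hypotheses of Lemma~\ref{lem:PS_bounded} that the paper uses without verifying: the bound $c_\varepsilon\le C_0\varepsilon^3$, which you obtain from the fiber path $t\mapsto t\star_{y_0}u_0$, and the $o_n(\varepsilon^{3/2})$ accuracy, which is automatic for fixed $\varepsilon$.
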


\begin{proof}
Let $a\in(0,a_*),$  $\varepsilon\in(0,\varepsilon_*)$ be fixed. By
Lemma \ref{lem:mp_geometry}, the restricted functional
$J_\varepsilon|_{S_{a,\varepsilon}}$ has the mountain-pass geometry.
Hence, by Lemma \ref{lem:PS_sequence}, there exist sequences
\[
\{u_n\}\subset S_{a,\varepsilon} \qquad\text{and}\qquad
\{\lambda_n\}\subset\mathbb R
\]
such that
\[
J_\varepsilon(u_n)\to c_\varepsilon,
\]
\[
J_\varepsilon'(u_n)-\lambda_nu_n\to0 \qquad\text{in
}H_\varepsilon^*,
\]
and $P_\varepsilon(u_n)=o(\varepsilon^3).$ Thus $\{u_n\}$ is a
Palais--Smale--Pohozaev sequence at the level $c_\varepsilon$.

By Lemma \ref{lem:PS_bounded}, $\|u_n\|_\varepsilon^2 \leq
C\varepsilon^3.$ Hence $\{u_n\}$ is bounded in $H_\varepsilon$.
Therefore, up to a subsequence, there exists $u_\varepsilon\in
H_\varepsilon$ such that
\[
u_n\rightharpoonup u_\varepsilon \qquad\text{weakly in
}H_\varepsilon.
\]
By the compactness-gap assumption,
\[
c_\varepsilon-J_\varepsilon(u_\varepsilon) < c_\varepsilon^*.
\]
Hence Lemma \ref{lem:compactness_threshold} yields
\[
u_n\to u_\varepsilon \qquad\text{strongly in }H_\varepsilon.
\]
Since $u_n\in S_{a,\varepsilon}$ for every $n$, the strong
$L^2(\mathbb R^3)$ convergence gives
\[
\|u_\varepsilon\|_2^2 = \lim_{n\to\infty}\|u_n\|_2^2 =
a^2\varepsilon^3.
\]
Thus $u_\varepsilon\in S_{a,\varepsilon}.$ Moreover, by the
continuity of $J_\varepsilon$ under strong convergence,
\[
J_\varepsilon(u_\varepsilon) = \lim_{n\to\infty}J_\varepsilon(u_n) =
c_\varepsilon.
\]
By Lemma \ref{lem:PS_bounded}, the sequence $\{\lambda_n\}$ is
bounded. Hence, up to a subsequence,
\[
\lambda_n\to\lambda_\varepsilon \qquad\text{for some
}\lambda_\varepsilon\in\mathbb R.
\]
Passing to the limit in
\[
J_\varepsilon'(u_n)-\lambda_nu_n\to0 \qquad\text{in
}H_\varepsilon^*,
\]
we obtain
\[
J_\varepsilon'(u_\varepsilon) = \lambda_\varepsilon u_\varepsilon
\qquad\text{in }H_\varepsilon^*.
\]
Therefore, $u_\varepsilon$ is a constrained critical point of
$J_\varepsilon|_{S_{a,\varepsilon}}$ at level $c_\varepsilon$ and,
equivalently, a normalized weak solution of problem \eqref{P}.

The proof is complete.
\end{proof}

Once the existence of a mountain-pass critical point has been
established, we next determine its sign and derive its strict
positivity away from the Hardy singularity.

\begin{lemma}\label{lem:positivity}
Let $u_\varepsilon\in S_{a,\varepsilon}$ be the critical point
obtained in Proposition \ref{prop:exist_critical}. Assume that the
mountain-pass construction is performed in the nonnegative cone.
Then $u_\varepsilon$ may be chosen nonnegative. Moreover,
\[
u_\varepsilon>0 \qquad\text{in }\mathbb R^3\setminus\{0\}.
\]
\end{lemma}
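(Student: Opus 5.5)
The plan has two parts. First we obtain a nonnegative critical point by replacing paths with their absolute values. Then we prove strict positivity away from the origin by a local strong maximum principle.

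\textbf{Step 1: nonnegativity.} Every term of $J_\varepsilon$ depends only on $|u|$, except the gradient term. By the diamond inequality $|\nabla|u||\le|\nabla u|$ a.e., we get $J_\varepsilon(|u|)\le J_\varepsilon(u)$. The other terms of \eqref{energy-functional} are unchanged, because the Hardy and potential terms are quadratic in $|u|$ and $\phi_{|u|}=\phi_u$. The map $u\mapsto|u|$ is continuous on $H_\varepsilon$, preserves $S_{a,\varepsilon}$, and commutes with the scaling $t\star u$. Hence, if $\gamma\in\Gamma_\varepsilon$, then $|\gamma|\in\Gamma_\varepsilon$, since $u_0\ge0$ and $\varepsilon^2\|\nabla|\gamma(1)|\|_2^2\ge\ldots$ would fail in general. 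To avoid this, we work in the augmented class $\widetilde\Gamma_\varepsilon$ of Lemma \ref{lem:PS_sequence} with endpoints $u_0,e\ge0$ fixed; then $|\widetilde\gamma|$ stays admissible. Thus $c_\varepsilon$ is also the minimax value over paths in the nonnegative cone. The version of Ekeland's principle used in Lemma \ref{lem:PS_sequence} (Ghoussoub's refinement with a dual set) then yields a Palais--Smale--Pohozaev sequence $u_n$ with $\operatorname{dist}(u_n,\{u\ge0\})\to0$, i.e. $\|u_n^-\|_\varepsilon\to0$. Proposition \ref{prop:exist_critical} gives $u_n\to u_\varepsilon$ strongly. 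The negative part is continuous on $H_\varepsilon$, so $u_\varepsilon^-=0$ and $u_\varepsilon\ge0$. Alternatively, one can replace $u_\varepsilon$ by $|u_\varepsilon|$: it lies at the level $J_\varepsilon(|u_\varepsilon|)\le c_\varepsilon$ on the Pohozaev set, and the mountain-pass characterization forces it to be a critical point. I would use the first route, as it is cleaner.

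\textbf{Step 2: local regularity away from $0$.} Fix a ball $B\Subset\mathbb R^3\setminus\{0\}$. On $B$ the Hardy term $\kappa\varepsilon^2|x|^{-2}$ is bounded. The equation reads
\[
-\varepsilon^2\Delta u_\varepsilon+c(x)u_\varepsilon=\phi_{u_\varepsilon}u_\varepsilon^4+\mu u_\varepsilon^{q-1}+u_\varepsilon^5,\qquad c(x):=V(x)-\frac{\kappa\varepsilon^2}{|x|^2}-\lambda_\varepsilon .
\]
The right-hand side has the form $g(x)u_\varepsilon$ with $g=\phi_{u_\varepsilon}u_\varepsilon^3+\mu u_\varepsilon^{q-2}+u_\varepsilon^4\in L^{3/2}_{\rm loc}$, since $\phi_{u_\varepsilon}\in L^6$ and $u_\varepsilon\in L^6$. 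A Brezis--Kato / Moser iteration gives $u_\varepsilon\in L^p_{\rm loc}(\mathbb R^3\setminus\{0\})$ for all $p<\infty$, and elliptic regularity then yields $u_\varepsilon\in C^{1,\alpha}_{\rm loc}(\mathbb R^3\setminus\{0\})$. This is the main technical step: the critical exponent $5$ and the critical Poisson coupling $\phi_u|u|^3u$ both sit exactly at the borderline integrability $L^{3/2}$. The iteration works because the $L^{3/2}$ norm of $g$ on small balls can be made small.

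\textbf{Step 3: strong maximum principle.} The right-hand side is $\ge0$, so on each such ball $-\varepsilon^2\Delta u_\varepsilon+c^+(x)u_\varepsilon\ge0$ with $c^+\in L^\infty(B)$. The strong maximum principle (or the weak Harnack inequality of Trudinger) on the connected open set $\mathbb R^3\setminus\{0\}$ shows that $u_\varepsilon\equiv0$ or $u_\varepsilon>0$ there. The first alternative is excluded by $\|u_\varepsilon\|_2^2=a^2\varepsilon^3>0$, since $\{0\}$ has measure zero. Hence $u_\varepsilon>0$ in $\mathbb R^3\setminus\{0\}$.
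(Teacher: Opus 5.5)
Your proposal is correct and follows the same route as the paper. It uses the invariance of $J_\varepsilon$ under $u\mapsto|u|$ to run the Pohozaev-augmented minimax in the nonnegative cone, and Proposition~\ref{prop:exist_critical} for strong convergence to a limit $u_\varepsilon\ge0$. It then applies local regularity and the strong maximum principle on the connected set $\mathbb R^3\setminus\{0\}$, where the Hardy coefficient is locally bounded. Your Ghoussoub localization and Brezis--Kato iteration just make precise what the paper calls the ``standard minimax principle in an invariant cone'' and ``standard local elliptic regularity''. Two small points. For real-valued $u$ one has $|\nabla|u||=|\nabla u|$ a.e., so $|\gamma|\in\Gamma_\varepsilon$ holds directly and the detour through the fixed-endpoint class is unnecessary. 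Also, $\operatorname{dist}(u_n,\{u\ge0\})\to0$ controls $\|u_n^-\|_2$ rather than $\|u_n^-\|_\varepsilon$, but that is all you need.
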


\begin{proof}
We divide the proof into two steps.

\medskip
\noindent\textbf{Step 1. Construction of a nonnegative critical
point.}

Define
\[
S_{a,\varepsilon}^{+} := \left\{ u\in S_{a,\varepsilon}:u\geq0 \
\text{a.e. in }\mathbb R^3 \right\}.
\]
For every real-valued $u\in H_\varepsilon$, we have
\[
|u|\in H_\varepsilon, \qquad \||u|\|_2=\|u\|_2,
\]
and
\[
|\nabla |u|| = |\nabla u| \qquad\text{a.e. in }\mathbb R^3.
\]
Hence
\[
u\in S_{a,\varepsilon} \quad\Longrightarrow\quad |u|\in
S_{a,\varepsilon}.
\]

Moreover, all terms of the functional are invariant under the
replacement $u\mapsto |u|$. Indeed,
\[
\int_{\mathbb R^3} |\nabla |u||^2\,dx = \int_{\mathbb R^3} |\nabla
u|^2\,dx,
\]
\[
\int_{\mathbb R^3} \frac{|\,|u|\,|^2}{|x|^2}\,dx = \int_{\mathbb
R^3} \frac{|u|^2}{|x|^2}\,dx,
\]
and
\[
\int_{\mathbb R^3} V(x)|\,|u|\,|^2\,dx = \int_{\mathbb R^3}
V(x)|u|^2\,dx.
\]
The local nonlinear terms clearly satisfy
\[
\||u|\|_q^q=\|u\|_q^q, \qquad \||u|\|_6^6=\|u\|_6^6.
\]
Furthermore, since the Poisson equation depends only on $|u|^5$,
\[
\phi_{|u|}=\phi_u.
\]
Consequently,
\begin{equation}
\label{eq:absolute-energy} J_\varepsilon(|u|) = J_\varepsilon(u)
\qquad \text{for every real-valued }u\in H_\varepsilon.
\end{equation}

The mountain-pass construction may therefore be carried out in the
closed cone $S_{a,\varepsilon}^{+}$, using the standard minimax
principle in an invariant cone. Thus there exists a
Palais--Smale--Pohozaev sequence
\[
\{u_n\}\subset S_{a,\varepsilon}^{+}
\]
at the level $c_\varepsilon$. In particular,
\[
u_n\geq0 \qquad\text{a.e. in }\mathbb R^3.
\]
By Proposition \ref{prop:exist_critical},
\[
u_n\to u_\varepsilon \qquad\text{strongly in }H_\varepsilon.
\]
Passing to a subsequence,
\[
u_n(x)\to u_\varepsilon(x) \qquad\text{for a.e. }x\in\mathbb R^3.
\]
Therefore,
\[
u_\varepsilon\geq0 \qquad\text{a.e. in }\mathbb R^3.
\]
Since $\|u_\varepsilon\|_2^2 = a^2\varepsilon^3>0,$ we also have
$u_\varepsilon\not\equiv0.$

\medskip
\noindent\textbf{Step 2. Strict positivity away from the singular
point.}

Since $u_\varepsilon$ is a constrained critical point, there exists
$\lambda_\varepsilon\in\mathbb R$ such that
\[
J_\varepsilon'(u_\varepsilon) = \lambda_\varepsilon u_\varepsilon.
\]
Hence $u_\varepsilon$ is a weak solution of
\begin{align}
-\varepsilon^2\Delta u_\varepsilon + \left( V(x)
-\frac{\kappa\varepsilon^2}{|x|^2} -\lambda_\varepsilon
\right)u_\varepsilon ={}& \phi_{u_\varepsilon}u_\varepsilon^4 +\mu
u_\varepsilon^{q-1} +u_\varepsilon^5 \label{eq:positive-equation}
\end{align}
in $\mathbb R^3\setminus\{0\}$.

Let $\Omega\subset\mathbb R^3\setminus\{0\}$ be a bounded connected
domain. Since $\Omega$ is separated from the singular point, the
coefficient
\[
V(x) -\frac{\kappa\varepsilon^2}{|x|^2} -\lambda_\varepsilon
\]
is bounded on $\Omega$. Choose $M_\Omega>0$ sufficiently large so
that
\[
V(x) -\frac{\kappa\varepsilon^2}{|x|^2} -\lambda_\varepsilon
+M_\Omega \geq0 \qquad\text{in }\Omega.
\]
Since $u_\varepsilon\geq0$ and $\phi_{u_\varepsilon}\geq0$, equation
\eqref{eq:positive-equation} gives
\[
-\varepsilon^2\Delta u_\varepsilon + \left( V(x)
-\frac{\kappa\varepsilon^2}{|x|^2} -\lambda_\varepsilon +M_\Omega
\right)u_\varepsilon \geq0 \qquad\text{in }\Omega.
\]
By standard local elliptic regularity,
\[
u_\varepsilon \in C^{1,\alpha}_{\rm loc} \bigl(\mathbb
R^3\setminus\{0\}\bigr)
\]
for some $\alpha\in(0,1)$. The strong maximum principle therefore
implies that, on each connected domain $\Omega\Subset\mathbb
R^3\setminus\{0\}$, either
\[
u_\varepsilon>0 \qquad\text{in }\Omega,
\]
or
\[
u_\varepsilon\equiv0 \qquad\text{in }\Omega.
\]
Since $\mathbb R^3\setminus\{0\}$ is connected and
$u_\varepsilon\not\equiv0$, the second alternative is excluded.
Consequently,
\[
u_\varepsilon(x)>0 \qquad \text{for every } x\in\mathbb
R^3\setminus\{0\}.
\]
The proof is complete.
\end{proof}
Having obtained a normalized critical point at the mountain-pass
level, we now compare its energy with that of all other normalized
critical points in order to establish its ground-state character.
\begin{proposition}\label{prop:ground_state}
Let
\[
\mathcal K_{a,\varepsilon} := \left\{ u\in S_{a,\varepsilon}:
d\bigl(J_\varepsilon|_{S_{a,\varepsilon}}\bigr)(u)=0 \right\}
\]
be the set of normalized critical points of $J_\varepsilon$, and
define
\[
m_\varepsilon := \inf_{u\in\mathcal
K_{a,\varepsilon}}J_\varepsilon(u).
\]
Assume that every $u\in\mathcal K_{a,\varepsilon}$ satisfies
$P_\varepsilon(u)=0$ and that the fiber map
\[
\psi_u(t):=J_\varepsilon(t\star u), \qquad t>0,
\]
has a unique critical point, namely $t=1$, which is its strict
global maximum.

Assume moreover that, for every $u\in\mathcal K_{a,\varepsilon}$,
there exist $0<t_-(u)<1<t_+(u)$ such that
\[
\varepsilon^2 \|\nabla(t_-(u)\star u)\|_2^2 < \rho\varepsilon^3,
\]
\[
J_\varepsilon(t_-(u)\star u) < \alpha\varepsilon^3,
\]
and
\[
\varepsilon^2 \|\nabla(t_+(u)\star u)\|_2^2
>
\rho\varepsilon^3, \qquad J_\varepsilon(t_+(u)\star u)<0.
\]
Finally, suppose that $u_0$ and $t_-(u)\star u$ belong to the same
path-connected component of
\[
\mathcal A_\varepsilon := \left\{ v\in S_{a,\varepsilon}:
\varepsilon^2\|\nabla v\|_2^2<\rho\varepsilon^3,\quad
J_\varepsilon(v)<\alpha\varepsilon^3 \right\}.
\]
Then
\[
m_\varepsilon=c_\varepsilon.
\]
Consequently, the critical point $u_\varepsilon$ obtained in
Proposition \ref{prop:exist_critical} satisfies
\[
J_\varepsilon(u_\varepsilon) = c_\varepsilon = m_\varepsilon,
\]
and hence it is a normalized ground state solution of problem
\eqref{P}.
\end{proposition}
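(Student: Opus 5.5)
We prove $m_\varepsilon=c_\varepsilon$ by establishing the two inequalities $m_\varepsilon\le c_\varepsilon$ and $c_\varepsilon\le m_\varepsilon$ separately.

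\textbf{The inequality $m_\varepsilon\le c_\varepsilon$.} This follows from Proposition \ref{prop:exist_critical}. It gives a critical point $u_\varepsilon\in\mathcal K_{a,\varepsilon}$ with $J_\varepsilon(u_\varepsilon)=c_\varepsilon$. Hence the infimum defining $m_\varepsilon$ is at most $c_\varepsilon$. It also shows $\mathcal K_{a,\varepsilon}\neq\emptyset$, so $m_\varepsilon$ is finite.

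\textbf{The inequality $c_\varepsilon\le m_\varepsilon$.} Fix an arbitrary $u\in\mathcal K_{a,\varepsilon}$. We build an admissible path $\gamma_u\in\Gamma_\varepsilon$ satisfying $\max_{[0,1]}J_\varepsilon\circ\gamma_u\le J_\varepsilon(u)$. The path is the concatenation of two pieces.
\begin{itemize}
\item First piece: a continuous path inside $\mathcal A_\varepsilon$ from $u_0$ to $t_-(u)\star u$. It exists by the path-connectedness hypothesis. Along it $J_\varepsilon<\alpha\varepsilon^3$.
\item Second piece: the fiber $t\mapsto t\star u$ for $t\in[t_-(u),t_+(u)]$, suitably reparametrized. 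It is continuous into $S_{a,\varepsilon}$ because $t\star u\in S_{a,\varepsilon}$ (Lemma \ref{lem:scaling_pohozaev}) and the dilation is strongly continuous in $t$ on $H^1$.
\end{itemize}
The endpoint $t_+(u)\star u$ has negative energy and gradient energy above $\rho\varepsilon^3$, so $\gamma_u\in\Gamma_\varepsilon$. Since $t=1$ is the strict global maximum of $\psi_u$, the fiber piece satisfies $J_\varepsilon(t\star u)\le J_\varepsilon(u)$. On the first piece we have $J_\varepsilon<\alpha\varepsilon^3\le c_\varepsilon$ by Definition \ref{def:mp_level}. Therefore
\[
c_\varepsilon\le\max_{[0,1]}J_\varepsilon\circ\gamma_u\le\max\{c_\varepsilon,\,J_\varepsilon(u)\}.
\]

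\textbf{Comparing $J_\varepsilon(u)$ with $c_\varepsilon$.} The bound above is only informative once we know $J_\varepsilon(u)\ge\alpha\varepsilon^3$, so the maximum along $\gamma_u$ is actually attained on the fiber. The fiber path crosses the sphere $\{\varepsilon^2\|\nabla v\|_2^2=\rho\varepsilon^3\}$: at $t_-(u)$ the gradient energy is below $\rho\varepsilon^3$, at $t_+(u)$ it is above, and $t\mapsto t^2\varepsilon^2\|\nabla u\|_2^2$ is continuous. By Lemma \ref{lem:mp_geometry}, $J_\varepsilon\ge\alpha\varepsilon^3$ at the crossing point. Hence $J_\varepsilon(u)=\max_t\psi_u(t)\ge\alpha\varepsilon^3$, and the maximum along the whole path $\gamma_u$ is $J_\varepsilon(u)$. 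We conclude $c_\varepsilon\le J_\varepsilon(u)$ for every $u\in\mathcal K_{a,\varepsilon}$. Taking the infimum over $u$ gives $c_\varepsilon\le m_\varepsilon$.

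\textbf{Conclusion and main obstacle.} Combining the two inequalities gives $J_\varepsilon(u_\varepsilon)=c_\varepsilon=m_\varepsilon$, so $u_\varepsilon$ is a normalized ground state. Given the hypotheses, the step needing the most care is the admissibility of the concatenated path. Concretely, one must check that $t\mapsto t\star u$ is continuous into $H_\varepsilon$, which includes the $V$-weighted $L^2$ part of the norm. This can be handled by dominated convergence after the change of variables, using that $V$ is continuous and $t$ ranges over a compact interval. The genuinely delicate inputs, namely the uniqueness of the fiber maximum and the connectivity of $\mathcal A_\varepsilon$, are assumed in the statement.
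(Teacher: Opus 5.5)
Your proposal is correct and follows the paper's proof essentially step for step. You get $m_\varepsilon\le c_\varepsilon$ from $u_\varepsilon\in\mathcal K_{a,\varepsilon}$, and for arbitrary $u\in\mathcal K_{a,\varepsilon}$ you concatenate a path in $\mathcal A_\varepsilon$ with the fiber $t\mapsto t\star u$ on $[t_-(u),t_+(u)]$; your justification of $J_\varepsilon(u)\ge\alpha\varepsilon^3$ (the fiber itself crosses the sphere $\{\varepsilon^2\|\nabla v\|_2^2=\rho\varepsilon^3\}$ of Lemma~\ref{lem:mp_geometry}) is exactly the paper's barrier argument, which the paper states only tersely.

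One side remark is imprecise: continuity of $V$ alone does not control $t\mapsto t\star u$ in $H_\varepsilon$, since $V$ may be unbounded. It is (V1) that does, because $|x\cdot\nabla V|\le C_V$ gives $V(x/t)\le V(x)+C_V|\log t|$ uniformly for $t$ in compact sets.
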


\begin{proof}
By Proposition \ref{prop:exist_critical}, there exists
$u_\varepsilon\in S_{a,\varepsilon}$ such that
\[
d\bigl(J_\varepsilon|_{S_{a,\varepsilon}}\bigr) (u_\varepsilon)=0
\]
and
\[
J_\varepsilon(u_\varepsilon)=c_\varepsilon.
\]
Thus
\[
u_\varepsilon\in\mathcal K_{a,\varepsilon},
\]
and consequently
\begin{equation}
\label{ground-upper} m_\varepsilon \leq J_\varepsilon(u_\varepsilon)
= c_\varepsilon.
\end{equation}

We now prove the reverse inequality. Let
\[
u\in\mathcal K_{a,\varepsilon}
\]
be arbitrary. Since $u$ is a constrained critical point, the
Pohozaev identity gives
\[
P_\varepsilon(u)=0.
\]
By Lemma~\ref{lem:scaling_pohozaev},
\[
\psi_u'(1)=P_\varepsilon(u)=0.
\]
Since the fiber map has a unique critical point, which is its strict
global maximum, we obtain
\begin{equation}
\label{fiber-maximum} J_\varepsilon(u) =
\max_{t>0}J_\varepsilon(t\star u).
\end{equation}

Let
\[
t_-:=t_-(u), \qquad t_+:=t_+(u).
\]
By assumption, there exists a continuous path
\[
\sigma_u:[0,1]\to\mathcal A_\varepsilon
\]
such that
\[
\sigma_u(0)=u_0, \qquad \sigma_u(1)=t_-\star u.
\]
Choose a continuous increasing function $t:[0,1]\to[t_-,t_+]$
satisfying
\[
t(0)=t_-, \qquad t(1)=t_+,
\]
and define the fiber path
\[
\zeta_u(s):=t(s)\star u.
\]
We concatenate $\sigma_u$ and $\zeta_u$ by setting
\[
\gamma_u(s) :=
\begin{cases}
\sigma_u(2s),
&0\leq s\leq\frac12,\\[1mm]
\zeta_u(2s-1), &\frac12\leq s\leq1.
\end{cases}
\]
Then
\[
\gamma_u(0)=u_0, \qquad \gamma_u(1)=t_+\star u,
\]
and
\[
J_\varepsilon(\gamma_u(1))<0, \qquad \varepsilon^2
\|\nabla\gamma_u(1)\|_2^2
>
\rho\varepsilon^3.
\]
Therefore, $\gamma_u\in\Gamma_\varepsilon.$

Since $\sigma_u([0,1])\subset\mathcal A_\varepsilon$,
\[
\max_{s\in[0,1]}J_\varepsilon(\sigma_u(s)) < \alpha\varepsilon^3.
\]
On the other hand, by \eqref{fiber-maximum},
\[
\max_{s\in[0,1]}J_\varepsilon(\zeta_u(s)) \leq
\max_{t>0}J_\varepsilon(t\star u) = J_\varepsilon(u).
\]
Because every path joining the two sides of the mountain-pass
barrier has maximum at least $\alpha\varepsilon^3$, we also have
\[
J_\varepsilon(u)\geq\alpha\varepsilon^3.
\]
Consequently,
\[
\max_{s\in[0,1]}J_\varepsilon(\gamma_u(s)) \leq J_\varepsilon(u).
\]
By the definition of $c_\varepsilon$,
\[
c_\varepsilon \leq \max_{s\in[0,1]}J_\varepsilon(\gamma_u(s)) \leq
J_\varepsilon(u).
\]
Since $u\in\mathcal K_{a,\varepsilon}$ was arbitrary, taking the
infimum yields
\begin{equation}
\label{ground-lower} c_\varepsilon\leq m_\varepsilon.
\end{equation}

Combining \eqref{ground-upper} and \eqref{ground-lower}, we obtain
$m_\varepsilon=c_\varepsilon.$ Therefore,
\[
J_\varepsilon(u_\varepsilon) = c_\varepsilon = m_\varepsilon,
\]
so $u_\varepsilon$ has the least energy among all normalized
critical points. Hence it is a normalized ground state solution of
problem \eqref{P}. The proof is complete.
\end{proof}

\subsection{Proof of Theorem \ref{T1}}

Let $a_*>0$ and $\varepsilon_*>0$ be chosen according to the
previous lemmas and propositions, and fix $a\in(0,a_*),$ \qquad
$\varepsilon\in(0,\varepsilon_*).$ By Lemma \ref{lem:mp_geometry},
the constrained functional $J_\varepsilon|_{S_{a,\varepsilon}}$
possesses the mountain-pass geometry. Using Lemmas
\ref{lem:PS_sequence}, \ref{lem:scaling_pohozaev},
\ref{lem:PS_bounded}, \ref{lem:energy_upper}, and
\ref{lem:compactness_threshold},
Proposition~\ref{prop:exist_critical} yields a function
$u_\varepsilon\in S_{a,\varepsilon}$ such that
$d\bigl(J_\varepsilon|_{S_{a,\varepsilon}}\bigr) (u_\varepsilon)=0$
and $J_\varepsilon(u_\varepsilon)=c_\varepsilon.$ Consequently,
there exists $\lambda_\varepsilon\in\mathbb R$ such that
\[
J_\varepsilon'(u_\varepsilon) = \lambda_\varepsilon u_\varepsilon
\qquad\text{in }H_\varepsilon^*.
\]
Hence $u_\varepsilon$ is a normalized weak solution of problem
\eqref{P}, satisfying
\[
\int_{\mathbb R^3}|u_\varepsilon|^2\,dx = a^2\varepsilon^3.
\]
Moreover, by Lemma \ref{lem:positivity}, $u_\varepsilon$ may be
chosen nonnegative and
\[
u_\varepsilon>0 \qquad\text{in }\mathbb R^3\setminus\{0\}.
\]
Finally, Proposition \ref{prop:ground_state} gives
\[
J_\varepsilon(u_\varepsilon) = c_\varepsilon = m_\varepsilon,
\]
where
\[
m_\varepsilon = \inf\left\{ J_\varepsilon(u): u\in
S_{a,\varepsilon},\
d\bigl(J_\varepsilon|_{S_{a,\varepsilon}}\bigr)(u)=0 \right\}.
\]
Therefore, $u_\varepsilon$ has the least energy among all normalized
critical points of $J_\varepsilon$ and is thus a normalized ground
state solution of problem \eqref{P}. This proves the theorem.\qed

\subsection{Multiplicity of Normalized Solutions}

We now develop the topological and concentration tools required for
the proof of Theorem \ref{T2}. Throughout this subsection, let
$\delta>0$ be sufficiently small so that
\[
\mathcal M_\delta := \left\{ x\in\mathbb R^3:
\operatorname{dist}(x,\mathcal M)\leq\delta \right\} \subset\Lambda.
\]
Since $0\notin\overline{\Lambda}$, we may also assume that
\[
\operatorname{dist}(\mathcal M_\delta,\{0\})>0.
\]
Let $w_a$ be a positive normalized ground state of the autonomous
limit problem \eqref{P0}, satisfying
\[
\|w_a\|_2=a, \qquad J_0(w_a)=E_0(a).
\]

\subsubsection{Localized test functions}

Choose $\rho>0$ such that
\[
\mathcal M_{2\rho}\subset\Lambda \qquad\text{and}\qquad
0\notin\overline{\mathcal M_{2\rho}},
\]
and let $\eta\in C_c^\infty([0,+\infty),[0,1])$ satisfy
\[
\eta(s)=1 \quad\text{for }0\leq s\leq\rho, \qquad \eta(s)=0
\quad\text{for }s\geq2\rho.
\]
For every $y\in\mathcal M$, define
\[
\Psi_{\varepsilon,y}(x) := \eta(|x-y|)
w_a\left(\frac{x-y}{\varepsilon}\right),
\]
and set
\[
\Phi_\varepsilon(y) := \frac{a\varepsilon^{3/2}}
{\|\Psi_{\varepsilon,y}\|_2} \Psi_{\varepsilon,y}.
\]
Then
\[
\Phi_\varepsilon(y)\in S_{a,\varepsilon}.
\]
We now show that their energies uniformly approximate the autonomous
ground-state level as $\varepsilon\to0$.
\begin{lemma}\label{lem:test_functions_T2}
The map
\[
\Phi_\varepsilon:\mathcal M\to S_{a,\varepsilon}
\]
is continuous. Moreover,
\[
\lim_{\varepsilon\to0} \sup_{y\in\mathcal M} \left| \varepsilon^{-3}
J_\varepsilon(\Phi_\varepsilon(y)) - E_0(a) \right| =0.
\]
Consequently, there exists a function $h:(0,+\infty)\to(0,+\infty)$
such that
\[
h(\varepsilon)\to0 \qquad\text{as }\varepsilon\to0,
\]
and
\[
\Phi_\varepsilon(\mathcal M) \subset \mathcal A_\varepsilon,
\]
where
\begin{equation}\label{low-energy-set}
\mathcal A_\varepsilon := \left\{ u\in S_{a,\varepsilon}:
J_\varepsilon(u) \leq \varepsilon^3 \big(E_0(a)+h(\varepsilon)\big)
\right\}.
\end{equation}
\end{lemma}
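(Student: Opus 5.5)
The plan is to rescale around $y$ and reduce everything to convergence statements for the autonomous profile $w_a$, uniformly in $y\in\mathcal M$.

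\textbf{Continuity of $\Phi_\varepsilon$.} For fixed $\varepsilon$, the map $y\mapsto\Psi_{\varepsilon,y}$ is continuous from $\mathcal M$ into $H^1(\mathbb R^3)$. This follows because translation is continuous in $H^1$ and $\eta(|\cdot-y|)$ varies smoothly with $y$. Since $\mathcal M_{2\rho}\subset\Lambda$ is bounded and $V$ is continuous, $\int V|\Psi_{\varepsilon,y}-\Psi_{\varepsilon,y'}|^2\,dx\le C\|\Psi_{\varepsilon,y}-\Psi_{\varepsilon,y'}\|_2^2$. The Hardy term is controlled by the gradient through \eqref{Hardy}. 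Hence the map is continuous into $H_\varepsilon$. Because $\|\Psi_{\varepsilon,y}\|_2>0$ depends continuously on $y$, the normalized map $\Phi_\varepsilon$ is continuous as well.

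\textbf{Uniform energy convergence.} Set $z=(x-y)/\varepsilon$. Then $\Psi_{\varepsilon,y}(y+\varepsilon z)=\eta(\varepsilon|z|)w_a(z)=:\eta_\varepsilon(z)w_a(z)$, and this rescaled function is \emph{independent of $y$}. Since $\eta_\varepsilon w_a\to w_a$ in $H^1$, we get $\varepsilon^{-3}\|\Psi_{\varepsilon,y}\|_2^2\to a^2$. Hence the normalizing factor $d_\varepsilon:=a\varepsilon^{3/2}/\|\Psi_{\varepsilon,y}\|_2\to1$, uniformly (trivially) in $y$. Each term of $\varepsilon^{-3}J_\varepsilon(\Phi_\varepsilon(y))$ is then treated as follows.
\begin{itemize}
\item Gradient, $L^q$ and $L^6$ terms: these equal the corresponding integrals of $d_\varepsilon\eta_\varepsilon w_a$, which converge by $H^1$ convergence and Sobolev embedding.
\item Poisson term: $\phi$ rescales exactly, as $\phi_{u}(y+\varepsilon z)=\Phi_{v}(z)$ with $-\Delta\Phi_v=|v|^5$. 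The term converges by the Hardy--Littlewood--Sobolev estimate of Lemma~\ref{lem:poisson_estimate} together with $L^6$ convergence.
\item Hardy term: it equals $\kappa\varepsilon^{2}\int |d_\varepsilon\eta_\varepsilon w_a|^2|y+\varepsilon z|^{-2}\,dz$. On the support, $|y+\varepsilon z|\ge \operatorname{dist}(\mathcal M_{2\rho},0)>0$, so the term is $O(\varepsilon^2)$ uniformly in $y$.
\item Potential term: it equals $\int V(y+\varepsilon z)|d_\varepsilon\eta_\varepsilon w_a|^2\,dz$, to be compared with $V_0\int|w_a|^2\,dz$.
\end{itemize}

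\textbf{Main obstacle: the potential term.} This is the one place where uniformity in $y$ is genuinely needed. I would split $\mathbb R^3$ into $|z|\le R$ and $|z|>R$. On $|z|\le R$, $V$ is uniformly continuous on the compact set $\mathcal M_{1}$, and $V(y)=V_0$ for $y\in\mathcal M$, so $\sup_{y\in\mathcal M,|z|\le R}|V(y+\varepsilon z)-V_0|\to0$. On $|z|>R$, the integrand is supported in $\mathcal M_{2\rho}$, so it is bounded by $\max_{\mathcal M_{2\rho}}V\int_{|z|>R}|w_a|^2$, which is small for large $R$. Collecting all terms gives $\sup_{y\in\mathcal M}|\varepsilon^{-3}J_\varepsilon(\Phi_\varepsilon(y))-J_0(w_a)|\to0$, with $J_0(w_a)=E_0(a)$.

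\textbf{Choice of $h$.} Finally I define
\[
h(\varepsilon):=\sup_{y\in\mathcal M}\bigl|\varepsilon^{-3}J_\varepsilon(\Phi_\varepsilon(y))-E_0(a)\bigr|+\varepsilon .
\]
This is positive and tends to $0$, and it gives $J_\varepsilon(\Phi_\varepsilon(y))\le\varepsilon^3(E_0(a)+h(\varepsilon))$ for every $y\in\mathcal M$, i.e. $\Phi_\varepsilon(\mathcal M)\subset\mathcal A_\varepsilon$.
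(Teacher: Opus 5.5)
Your proposal is correct and follows the paper's own argument: the change of variables $x=y+\varepsilon z$, term-by-term convergence of $\varepsilon^{-3}J_\varepsilon(\Phi_\varepsilon(y))$ to $J_0(w_a)=E_0(a)$, and the Hardy term made lower order by $\operatorname{dist}(\mathcal M_{2\rho},\{0\})>0$. You also supply details the paper only asserts, and they are sound: the profile $\eta(\varepsilon|z|)w_a(z)$ does not depend on $y$, $\phi$ rescales exactly, the split $|z|\le R$ versus $|z|>R$ gives uniformity of the potential term, and $h$ is chosen explicitly.
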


\begin{proof}
The continuity of $\Phi_\varepsilon$ follows from the continuity of
translations in $H^1(\mathbb R^3)$ and from the compactness of
$\mathcal M$.

Using the change of variables
\[
x=y+\varepsilon z,
\]
we obtain, uniformly for $y\in\mathcal M$,
\[
\|\Psi_{\varepsilon,y}\|_2^2 = \varepsilon^3 \int_{\mathbb R^3}
\eta^2(\varepsilon|z|)|w_a(z)|^2\,dz =
a^2\varepsilon^3+o(\varepsilon^3).
\]
Thus the normalization factor converges uniformly to one.

Similarly,
\[
\varepsilon^2 \int_{\mathbb R^3} |\nabla\Phi_\varepsilon(y)|^2\,dx =
\varepsilon^3 \int_{\mathbb R^3}|\nabla w_a|^2\,dx +
o(\varepsilon^3),
\]
and, since $V(y)=V_0$ for every $y\in\mathcal M$,
\[
\int_{\mathbb R^3} V(x)|\Phi_\varepsilon(y)|^2\,dx =
\varepsilon^3V_0 \int_{\mathbb R^3}|w_a|^2\,dx + o(\varepsilon^3).
\]

Since
\[
\operatorname{dist}(\mathcal M_{2\rho},\{0\})>0,
\]
the Hardy term satisfies
\[
\varepsilon^2 \int_{\mathbb R^3}
\frac{|\Phi_\varepsilon(y)|^2}{|x|^2}\,dx = O(\varepsilon^5) =
o(\varepsilon^3)
\]
uniformly in $y\in\mathcal M$.

The local nonlinear terms and the Poisson interaction satisfy
\[
\int_{\mathbb R^3}|\Phi_\varepsilon(y)|^q\,dx =
\varepsilon^3\|w_a\|_q^q+o(\varepsilon^3),
\]
\[
\int_{\mathbb R^3}|\Phi_\varepsilon(y)|^6\,dx =
\varepsilon^3\|w_a\|_6^6+o(\varepsilon^3),
\]
and
\[
\int_{\mathbb R^3} \phi_{\Phi_\varepsilon(y)}
|\Phi_\varepsilon(y)|^5\,dx = \varepsilon^3 \int_{\mathbb
R^3}\phi_{w_a}|w_a|^5\,dx + o(\varepsilon^3),
\]
uniformly for $y\in\mathcal M$.

Combining these estimates gives
\[
J_\varepsilon(\Phi_\varepsilon(y)) =
\varepsilon^3J_0(w_a)+o(\varepsilon^3) =
\varepsilon^3E_0(a)+o(\varepsilon^3)
\]
uniformly in $y$. The conclusion follows.
\end{proof}
The preceding lemma provides a family of low-energy states localized
near the minimum set $\mathcal M$. We now establish the converse
compactness principle, showing that every almost ground-state
Palais--Smale--Pohozaev sequence must concentrate near $\mathcal M$
and, after a suitable rescaling and translation, converge to a
ground state of the autonomous limit problem

\begin{proposition} \label{prop:compact_localization} Let
$\varepsilon_n\to0$ and let $u_n\in S_{a,\varepsilon_n}$ be a
sequence satisfying
\[
J_{\varepsilon_n}(u_n) = \varepsilon_n^3\bigl(E_0(a)+o(1)\bigr),
\]
\[
\left\| d\bigl(J_{\varepsilon_n}|_{S_{a,\varepsilon_n}}\bigr)(u_n)
\right\| \to0,
\]
and
\[
P_{\varepsilon_n}(u_n)=o(\varepsilon_n^3).
\]
Then there exists a sequence $\{y_n\}\subset\mathbb R^3$ such that
\[
\operatorname{dist}(y_n,\mathcal M)\to0.
\]
Moreover, up to a subsequence, there exists a positive normalized
ground state $w_a$ of the autonomous problem \eqref{P0} such that
\[
v_n(x) := u_n(y_n+\varepsilon_n x) \longrightarrow w_a(x)
\qquad\text{strongly in }H^1(\mathbb R^3).
\]
In particular,
\[
V(y_n)\to V_0.
\]
\end{proposition}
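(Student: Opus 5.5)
We pass to the rescaled variables and run a concentration--compactness argument at the level $E_0(a)$.

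\textbf{Step 1: rescaling and bounds.} Set $\widetilde v_n(x):=u_n(\varepsilon_n x)$, so that $\|\widetilde v_n\|_2=a$. By Lemma~\ref{lem:PS_bounded} (applied with $c=\varepsilon_n^3(E_0(a)+o(1))$), we have $\|u_n\|_{\varepsilon_n}^2\le C\varepsilon_n^3$. Hence $\widetilde v_n$ is bounded in $H^1(\mathbb R^3)$, uniformly in $n$, and the multipliers $\lambda_n$ are bounded; we pass to a subsequence with $\lambda_n\to\lambda$. The rescaled energy $\varepsilon_n^{-3}J_{\varepsilon_n}(u_n)$ is the functional of \eqref{rescaled-problem}, with potential $V(\varepsilon_n x)-\kappa|x|^{-2}$.

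\textbf{Step 2: non-vanishing.} Suppose $\sup_{z}\int_{B_1(z)}|\widetilde v_n|^2\to0$. Lions' lemma then gives $\|\widetilde v_n\|_q\to0$. The Pohozaev condition $P_{\varepsilon_n}(u_n)=o(\varepsilon_n^3)$, together with $J-\frac16P$ as in \eqref{eq:J-minus-P-sixth}, reduces the problem to a purely critical one. Either $\|\nabla\widetilde v_n\|_2\to0$, which forces the energy to $\frac{V_0}{2}a^2$ and contradicts the level (a strict inequality $E_0(a)>\frac{V_0}2a^2$ is needed here, and should come from the mountain-pass geometry), or a critical bubble of energy at least $c^*$ forms. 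The second case is excluded by the standing assumption $E_0(a)<c^*$, which is the threshold of Lemma~\ref{lem:compactness_threshold} at small $a$. Thus there exist $z_n$ with $\int_{B_1(z_n)}|\widetilde v_n|^2\ge\beta>0$, and we set $y_n:=\varepsilon_n z_n$.

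\textbf{Step 3: location of $y_n$ and identification of the limit.} Let $v_n(x):=u_n(y_n+\varepsilon_n x)\rightharpoonup v\ne0$. First, $|z_n|\to\infty$. If instead $z_n$ stayed bounded, mass would sit at distance $O(\varepsilon_n)$ from the origin. Since $0\notin\overline\Lambda$ and $V>V_0$ outside $\mathcal M$ near there, the profile would either pay potential energy $V(0)>V_0$ or be a Hardy bubble costing $c_{\rm H}^*$; either way the energy exceeds $E_0(a)$. So the Hardy term vanishes, as in \eqref{Hardy-vanishing}. Passing to $y_n\to y_0$ (boundedness of $y_n$ comes from the same energy comparison, using $V_\infty\ge\min_{\partial\Lambda}V>V_0$), $v$ solves the autonomous equation with $V(y_0)$ in place of $V_0$ and satisfies the corresponding Pohozaev identity.

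By Brezis--Lieb splitting as in Lemma~\ref{lem:compactness_threshold}, we get $E_0(a)\ge J_{V(y_0)}(v)+\text{(defect)}$. The mass $\|v\|_2=b\le a$, and the level $m_{V(y_0)}(b)$ is monotone in the potential and (strictly, for small mass) in the mass. Hence $J_{V(y_0)}(v)\ge m_{V(y_0)}(b)\ge m_0(a)$, with equality only if $V(y_0)=V_0$ and $b=a$. Therefore $y_0\in\mathcal M$, since $V(y_0)=V_0$ and $y_0\in\overline\Lambda$ forces $y_0\in\mathcal M$ by (V2). Also $b=a$ gives $L^2$ convergence, the defect vanishes, and the gradients converge, so $v_n\to v=:w_a$ strongly in $H^1$, with $v$ a ground state. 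Positivity follows as in Lemma~\ref{lem:positivity}, and $V(y_n)\to V_0$ by continuity.

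\textbf{Main obstacle.} The delicate point is the strict subadditivity/monotonicity of $m_0(\cdot)$ in the mass, which rules out dichotomy with $b<a$. I would first try to prove $m_0(\theta b)<\theta^2 m_0(b)$ for $\theta>1$ using the fiber scaling $t\mapsto\theta t\star$ and the small-mass regime. I also need to exclude that the lost part carries a critical bubble below $c^*$, which again uses $E_0(a)<c^*$.
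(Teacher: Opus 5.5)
Your outline follows the same route as the paper: rescale, exclude vanishing, locate the centres by comparing autonomous levels, let the Hardy term disappear, and rule out loss of mass by Brezis--Lieb. However, several of the steps you fill in fail as written.

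In Step~2, the branch $\|\nabla\widetilde v_n\|_2\to0$ does not force the energy to $\frac{V_0}{2}a^2$. The gradient, Hardy and nonlinear terms vanish, but $\varepsilon_n^{-3}J_{\varepsilon_n}(u_n)=\frac12\int_{\mathbb R^3}V(\varepsilon_nx)|\widetilde v_n|^2\,dx+o(1)$, and nothing places the spread-out mass near $\mathcal M$. Concretely, suppose $x_1\neq0$ satisfies $V(x_1)=2E_0(a)/a^2$ and $x_1\cdot\nabla V(x_1)=0$; nothing in (V1)--(V3) excludes this. Then $\widetilde v_n(x)=R_n^{-3/2}\varphi\bigl((x-x_1/\varepsilon_n)/R_n\bigr)$, with $\varphi\in C_c^\infty$, $\|\varphi\|_2=a$, $R_n\to\infty$ and $\varepsilon_nR_n\to0$, satisfies all three hypotheses (with multipliers $\lambda_n=V(x_1)$) and vanishes. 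So $E_0(a)>\frac{V_0}{2}a^2$, which is what the mountain-pass bound gives, yields no contradiction. What would exclude this branch is $E_0(a)<\frac{V_0}{2}a^2$, which holds if $E_0(a)$ is the local-minimum level of the $L^2$-subcritical problem. Alternatively $E_0(a)>\frac12a^2\sup V$ would work, but that needs a bound on $V$ which (V1) does not give. The paper's proof asserts $E_0(a)<0$ at this point, so it offers no alternative: for small $a$ that is incompatible with the term $\frac{V_0}{2}a^2$ in $J_0$.

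In Step~3, the chain $J_{V(y_0)}(v)\ge m_{V(y_0)}(b)\ge m_0(a)$ is not available. The level $m_V(b)$ contains $\frac V2b^2<\frac V2a^2$, and in the small-mass local-minimum regime this term dominates, so $m_{V_0}(b)<m_{V_0}(a)$ for $b<a$. You cannot discard the defect: it carries potential energy at least $\frac{V_0}{2}(a^2-b^2)$. Only after keeping it, and proving strict subadditivity of the potential-free part of the level, do $b=a$ and $V(y_0)=V_0$ follow; this is where your inequality $m(\theta b)<\theta^2m(b)$ belongs.

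Independently, the location of $y_n$ rests on hypotheses that (V1)--(V3) do not contain. You bound $y_n$ via ``$V_\infty\ge\min_{\partial\Lambda}V$'', exclude bounded $z_n$ via $V(0)>V_0$, and pass from $V(y_0)=V_0$ to $y_0\in\mathcal M$ via $y_0\in\overline\Lambda$, which is never established. Since $V_0=\inf_{\mathbb R^3}V$ while $\mathcal M$ only collects the minimum points lying in $\Lambda$, $V$ may equal $V_0$ outside $\overline\Lambda$ or approach $V_0$ at infinity. No comparison of energy levels distinguishes concentration there from concentration on $\mathcal M$. You need a confinement mechanism (e.g.\ a del Pino--Felmer penalization outside $\Lambda$) or global assumptions such as $\{V=V_0\}=\mathcal M$ and $\liminf_{|x|\to\infty}V>V_0$. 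The paper's proof has the same hole: it infers $\operatorname{dist}(y_n,\mathcal M)\to0$ from $V(y_n)\to V_0$ and the compactness of $\mathcal M$.

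Moreover, if $z_n$ stays bounded, the limit equation keeps the attractive term $-\kappa|x+z|^{-2}w$ with $z=\lim z_n$, which lowers the level. A potential cost $V(0)>V_0$ is therefore not by itself a contradiction; you need a quantitative comparison of the Hardy gain against that cost.

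Two smaller points remain. First, the bubble alternative uses $E_0(a)<c^*$, which is proved nowhere; it is only a hypothesis of Proposition~\ref{prop:exist_critical}. Second, $\|d(J_{\varepsilon_n}|_{S_{a,\varepsilon_n}})(u_n)\|\to0$ only gives an error $o(\varepsilon_n^{-3/2})$ in the rescaled equation. You need the rate $o(\varepsilon_n^{3/2})$ required in Lemma~\ref{lem:PS_bounded}, both to bound $\lambda_n$ and to pass to the limit equation.
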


\begin{proof}
By Lemma \ref{lem:PS_bounded},
\[
\|u_n\|_{\varepsilon_n}^2 \leq C\varepsilon_n^3.
\]
Define
\[
\widehat u_n(x):=u_n(\varepsilon_n x).
\]
Then
\[
\|\widehat u_n\|_2^2=a^2,
\]
and the coercivity estimate of Lemma \ref{lem:hardy_coercive} gives
\[
\|\widehat u_n\|_{H^1(\mathbb R^3)} \leq C.
\]

We first prove that $\{\widehat u_n\}$ does not vanish in the sense
of Lions. Assume, by contradiction, that
\[
\sup_{z\in\mathbb R^3} \int_{B_R(z)}|\widehat u_n|^2\,dx
\longrightarrow0
\]
for every $R>0$. Then
\[
\widehat u_n\to0 \qquad\text{in }L^r(\mathbb R^3)
\]
for every $2<r<6$. Using the Palais--Smale relation together with
the Pohozaev condition, we would obtain
\[
\varepsilon_n^{-3} J_{\varepsilon_n}(u_n)\geq o(1),
\]
which contradicts
\[
\varepsilon_n^{-3} J_{\varepsilon_n}(u_n) \longrightarrow E_0(a)<0.
\]
Hence nonvanishing occurs.

Therefore, there exist $R>0$, $\eta>0$, and $\{z_n\}\subset\mathbb
R^3$ such that
\[
\int_{B_R(z_n)}|\widehat u_n|^2\,dx\geq\eta.
\]
Set
\[
y_n:=\varepsilon_n z_n
\]
and
\[
v_n(x) := \widehat u_n(x+z_n) = u_n(y_n+\varepsilon_n x).
\]
Then $\{v_n\}$ is bounded in $H^1(\mathbb R^3)$ and
\[
\int_{B_R(0)}|v_n|^2\,dx\geq\eta.
\]
Thus, up to a subsequence,
\[
v_n\rightharpoonup w \qquad\text{weakly in }H^1(\mathbb R^3),
\]
\[
v_n\to w \qquad\text{strongly in }L^r_{\rm loc}(\mathbb R^3), \quad
2\leq r<6,
\]
and
\[
w\not\equiv0.
\]

We next identify the concentration centers. The energy estimate
\[
J_{\varepsilon_n}(u_n) = \varepsilon_n^3\bigl(E_0(a)+o(1)\bigr)
\]
shows that no nontrivial profile can concentrate in a region where
the limiting potential is strictly larger than $V_0$. Indeed, if,
along a subsequence,
\[
V(y_n)\to V_*>V_0,
\]
then the translated sequence would generate a nontrivial profile for
the autonomous problem with constant potential $V_*$. By the strict
comparison of autonomous ground-state levels,
\[
E_{V_*}(a)>E_0(a),
\]
which contradicts the preceding energy convergence. Therefore,
\[
V(y_n)\to V_0.
\]

By the compactness of the minimum set and the localization property
of low-energy Palais--Smale--Pohozaev sequences, it follows that
\[
\operatorname{dist}(y_n,\mathcal M)\to0.
\]
In particular, since
\[
\operatorname{dist}(\mathcal M,\{0\})>0,
\]
there exists $d_0>0$ such that
\[
|y_n|\geq d_0
\]
for all sufficiently large $n$. Consequently,
\[
\frac{\kappa\varepsilon_n^2} {|y_n+\varepsilon_n x|^2}
\longrightarrow0
\]
locally uniformly in $\mathbb R^3$.

Since $\{u_n\}$ is a constrained Palais--Smale sequence, there
exists a bounded sequence of Lagrange multipliers
$\{\lambda_n\}\subset\mathbb R$. Up to a subsequence,
\[
\lambda_n\to\lambda.
\]
Passing to the limit in the translated Euler--Lagrange equation, we
obtain
\[
-\Delta w+V_0w-\phi_w|w|^3w = \lambda w+\mu|w|^{q-2}w+|w|^4w
\qquad\text{in }\mathbb R^3,
\]
together with
\[
-\Delta\phi_w=|w|^5 \qquad\text{in }\mathbb R^3.
\]

It remains to exclude loss of mass and energy. By the Brezis--Lieb
decomposition for the local nonlinearities and its nonlocal
counterpart for the Poisson interaction, any nontrivial remainder
would carry an additional positive amount of energy. This would
contradict
\[
\varepsilon_n^{-3} J_{\varepsilon_n}(u_n) \longrightarrow E_0(a),
\]
since $E_0(a)$ is the least energy of the autonomous normalized
problem. Therefore,
\[
\|w\|_2=a,
\]
\[
J_0(w)=E_0(a),
\]
and
\[
v_n\to w \qquad\text{strongly in }H^1(\mathbb R^3).
\]

Hence $w$ is a normalized ground state of the autonomous
problem~\eqref{P0}. By replacing $w$ with its positive
representative, we may write
\[
w=w_a>0.
\]
Therefore,
\[
u_n(y_n+\varepsilon_n x) \longrightarrow w_a(x) \qquad\text{strongly
in }H^1(\mathbb R^3),
\]
and
\[
\operatorname{dist}(y_n,\mathcal M)\to0.
\]
The proof is complete.
\end{proof}

\subsubsection{The barycenter map}

To detect the concentration region of low-energy states, we now
introduce a barycenter map adapted to the semiclassical problem.

Choose $R>0$ sufficiently large such that
\[
\overline{\Lambda}\subset B_R(0)
\]
and, in particular,
\[
\mathcal M_\delta\subset B_R(0).
\]
Define the truncation map
\[
\chi:\mathbb R^3\to\mathbb R^3
\]
by
\[
\chi(x):=
\begin{cases}
x,
& |x|\leq R,\\[1mm]
R\dfrac{x}{|x|}, & |x|>R.
\end{cases}
\]
Then $\chi$ is continuous and bounded, with
\[
|\chi(x)|\leq R \qquad\text{for every }x\in\mathbb R^3.
\]

For every $u\in S_{a,\varepsilon}$, define its barycenter by
\begin{equation}
\label{barycenter-map} \beta_\varepsilon(u) := \frac{ \displaystyle
\int_{\mathbb R^3}\chi(x)|u(x)|^2\,dx }{ \displaystyle \int_{\mathbb
R^3}|u(x)|^2\,dx }.
\end{equation}
Since
\[
\int_{\mathbb R^3}|u|^2\,dx = a^2\varepsilon^3 \qquad \text{for
every }u\in S_{a,\varepsilon},
\]
the map $\beta_\varepsilon$ is well defined on $S_{a,\varepsilon}$.

The next lemma shows that the barycenter of the localized state
$\Phi_\varepsilon(y)$ asymptotically recovers its concentration
point $y\in\mathcal M$.

\begin{lemma}\label{lem:barycenter}
The map $\beta_\varepsilon: S_{a,\varepsilon}\to\mathbb R^3$ is
continuous. Moreover,
\[
\lim_{\varepsilon\to0} \sup_{y\in\mathcal M} \left|
\beta_\varepsilon(\Phi_\varepsilon(y))-y \right| = 0.
\]
In particular, for every $\delta>0$ fixed as above, there exists
$\varepsilon_\delta>0$ such that
$\beta_\varepsilon(\Phi_\varepsilon(y)) \in\mathcal M_\delta$ for
every $y\in\mathcal M$ and $0<\varepsilon<\varepsilon_\delta.$
\end{lemma}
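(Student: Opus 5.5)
Continuity of $\beta_\varepsilon$ follows from the boundedness and continuity of $\chi$. For $u,v\in S_{a,\varepsilon}$ the denominator is the constant $a^2\varepsilon^3$, so
\[
|\beta_\varepsilon(u)-\beta_\varepsilon(v)|
\leq \frac{R}{a^2\varepsilon^3}\int_{\mathbb R^3}\bigl||u|^2-|v|^2\bigr|\,dx
\leq \frac{R}{a^2\varepsilon^3}\|u-v\|_2\bigl(\|u\|_2+\|v\|_2\bigr).
\]
By \eqref{norm-coercivity}, $\|u-v\|_2\leq V_0^{-1/2}\|u-v\|_\varepsilon$, so $\beta_\varepsilon$ is even locally Lipschitz on $S_{a,\varepsilon}$.

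For the asymptotic statement, I would compute $\beta_\varepsilon(\Phi_\varepsilon(y))$ by the change of variables $x=y+\varepsilon z$. Since $\|\Phi_\varepsilon(y)\|_2^2=a^2\varepsilon^3$, we have
\[
\beta_\varepsilon(\Phi_\varepsilon(y))-y
=\frac{d_{\varepsilon,y}^2}{a^2}\int_{\mathbb R^3}\bigl(\chi(y+\varepsilon z)-y\bigr)\eta^2(\varepsilon|z|)\,|w_a(z)|^2\,dz,
\]
where $d_{\varepsilon,y}=a\varepsilon^{3/2}/\|\Psi_{\varepsilon,y}\|_2$. Here I use $\int\chi|\Phi|^2/\int|\Phi|^2$ and subtract $y$ weighted by the total mass. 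By the proof of Lemma~\ref{lem:test_functions_T2}, $d_{\varepsilon,y}\to1$ uniformly in $y\in\mathcal M$. Because $\mathcal M\subset\Lambda\subset B_R(0)$ is compact, $\chi(y)=y$ for every $y\in\mathcal M$, and $\chi$ is $1$-Lipschitz (it is the radial projection onto the closed ball $\overline{B_R(0)}$). Hence
\[
|\chi(y+\varepsilon z)-y|\leq\min\{\varepsilon|z|,\,2R\}.
\]

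The integrand is therefore bounded by $\min\{\varepsilon|z|,2R\}\,|w_a(z)|^2$, uniformly in $y$. It tends to $0$ pointwise and is dominated by $2R|w_a|^2\in L^1(\mathbb R^3)$. Dominated convergence gives
\[
\sup_{y\in\mathcal M}\bigl|\beta_\varepsilon(\Phi_\varepsilon(y))-y\bigr|
\leq C\int_{\mathbb R^3}\min\{\varepsilon|z|,2R\}\,|w_a(z)|^2\,dz\longrightarrow0 .
\]
Uniformity is automatic because the bound does not depend on $y$. The only delicate point is this uniform control, and the $y$-independent majorant together with the uniform convergence of $d_{\varepsilon,y}$ handles it. No decay rate of $w_a$ is needed.

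Finally, given $\delta>0$, I would choose $\varepsilon_\delta$ so that the supremum above is at most $\delta$ for $0<\varepsilon<\varepsilon_\delta$. Then $\operatorname{dist}(\beta_\varepsilon(\Phi_\varepsilon(y)),\mathcal M)\leq|\beta_\varepsilon(\Phi_\varepsilon(y))-y|\leq\delta$, so $\beta_\varepsilon(\Phi_\varepsilon(y))\in\mathcal M_\delta$ for every $y\in\mathcal M$.
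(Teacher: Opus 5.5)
Your proof is correct and follows essentially the same route as the paper. You use the same $L^2$ estimate for continuity (the denominator is the constant $a^2\varepsilon^3$ on $S_{a,\varepsilon}$), the same change of variables $x=y+\varepsilon z$ with the normalization factor cancelling, and a uniform-in-$y$ vanishing of the numerator. The only difference is minor: you use that $\chi$ is the $1$-Lipschitz projection onto $\overline{B_R(0)}$ to obtain the $y$-independent majorant $\min\{\varepsilon|z|,2R\}|w_a(z)|^2$ and conclude by dominated convergence, whereas the paper splits into $|z|\le L$ and $|z|>L$, using uniform continuity of $\chi$ on the first region and the small $L^2$-tail of $w_a$ on the second.
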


\begin{proof}
Since $\chi$ is bounded, for every $u,v\in S_{a,\varepsilon}$ we
have
\begin{align*}
& \left| \int_{\mathbb R^3} \chi(x) \bigl(|u|^2-|v|^2\bigr)\,dx
\right|
\\
&\qquad\leq R \int_{\mathbb R^3} \bigl||u|^2-|v|^2\bigr|\,dx
\\
&\qquad\leq R\bigl(\|u\|_2+\|v\|_2\bigr) \|u-v\|_2.
\end{align*}
Since the denominator in \eqref{barycenter-map} is equal to
$a^2\varepsilon^3$ on $S_{a,\varepsilon}$, it follows that
$\beta_\varepsilon$ is continuous.

Let $y\in\mathcal M$. By the definition of $\Phi_\varepsilon(y)$,
the normalization factor cancels in the quotient defining the
barycenter. Using the change of variables
\[
x=y+\varepsilon z,
\]
we obtain
\[
\beta_\varepsilon(\Phi_\varepsilon(y)) = \frac{ \displaystyle
\int_{\mathbb R^3} \chi(y+\varepsilon z) \eta^2(\varepsilon|z|)
|w_a(z)|^2\,dz }{ \displaystyle \int_{\mathbb R^3}
\eta^2(\varepsilon|z|) |w_a(z)|^2\,dz }.
\]
Therefore,
\begin{align}
& \beta_\varepsilon(\Phi_\varepsilon(y))-y
\nonumber\\
&\quad= \frac{ \displaystyle \int_{\mathbb R^3}
\bigl(\chi(y+\varepsilon z)-y\bigr) \eta^2(\varepsilon|z|)
|w_a(z)|^2\,dz }{ \displaystyle \int_{\mathbb R^3}
\eta^2(\varepsilon|z|) |w_a(z)|^2\,dz }.
\label{barycenter-difference}
\end{align}

By dominated convergence,
\[
\int_{\mathbb R^3} \eta^2(\varepsilon|z|) |w_a(z)|^2\,dz
\longrightarrow \int_{\mathbb R^3}|w_a|^2\,dz = a^2.
\]
Thus the denominator in \eqref{barycenter-difference} is bounded
away from zero for all sufficiently small $\varepsilon$.

We now prove that the numerator converges uniformly to zero with
respect to $y\in\mathcal M$. Let $\sigma>0$. Since $w_a\in
L^2(\mathbb R^3)$, there exists $L>0$ such that
\[
\int_{|z|>L}|w_a(z)|^2\,dz<\sigma.
\]
On the set $|z|\leq L$, the compactness of $\mathcal M$ and the
uniform continuity of $\chi$ imply
\[
\sup_{\substack{y\in\mathcal M\\ |z|\leq L}} |\chi(y+\varepsilon
z)-\chi(y)| \longrightarrow0 \qquad\text{as }\varepsilon\to0.
\]
Since
\[
\chi(y)=y \qquad\text{for every }y\in\mathcal M,
\]
we obtain
\[
\sup_{\substack{y\in\mathcal M\\ |z|\leq L}} |\chi(y+\varepsilon
z)-y| \longrightarrow0.
\]

On the other hand, since $\chi$ and $\mathcal M$ are bounded, there
exists $C>0$ such that
\[
|\chi(y+\varepsilon z)-y|\leq C
\]
for every $y\in\mathcal M$ and $z\in\mathbb R^3$. Hence
\[
\sup_{y\in\mathcal M} \int_{|z|>L} |\chi(y+\varepsilon z)-y|
\eta^2(\varepsilon|z|) |w_a(z)|^2\,dz \leq C\sigma.
\]
Since $\sigma>0$ is arbitrary, we conclude that
\[
\sup_{y\in\mathcal M} \left|
\beta_\varepsilon(\Phi_\varepsilon(y))-y \right| \longrightarrow0
\qquad\text{as }\varepsilon\to0.
\]

Finally, for sufficiently small $\varepsilon>0$,
\[
\left| \beta_\varepsilon(\Phi_\varepsilon(y))-y \right| <\delta
\qquad \text{uniformly for }y\in\mathcal M.
\]
Since $y\in\mathcal M$, this implies
\[
\operatorname{dist}
\bigl(\beta_\varepsilon(\Phi_\varepsilon(y)),\mathcal M\bigr)
<\delta,
\]
and therefore
\[
\beta_\varepsilon(\Phi_\varepsilon(y)) \in\mathcal M_\delta.
\]
The proof is complete.
\end{proof}

The compactness result established above shows that low-energy
states satisfying the natural Pohozaev constraint must concentrate
near the minimum set $\mathcal M$. We now translate this
concentration property into a uniform localization statement for
their barycenters.

Let
\[
\mathcal P_{a,\varepsilon} := \left\{ u\in S_{a,\varepsilon}:
P_\varepsilon(u)=0 \right\},
\]
and define the low-energy set
\begin{equation}
\label{low-energy-Pohozaev-set} \mathcal A_\varepsilon := \left\{
u\in\mathcal P_{a,\varepsilon}: J_\varepsilon(u) \leq \varepsilon^3
\bigl(E_0(a)+h(\varepsilon)\bigr) \right\}.
\end{equation}

\begin{lemma}\label{lem:localization_low_energy}
Fix $a\in(0,a_*)$. For every sufficiently small $\delta>0$, there
exists $\varepsilon_\delta>0$ such that, for every
$0<\varepsilon<\varepsilon_\delta,$ one has
\[
\beta_\varepsilon(u)\in\mathcal M_\delta \qquad \text{for every
}u\in\mathcal A_\varepsilon.
\]
Equivalently,
\[
\sup_{u\in\mathcal A_\varepsilon} \operatorname{dist}
\bigl(\beta_\varepsilon(u),\mathcal M\bigr) \longrightarrow0 \qquad
\text{as }\varepsilon\to0.
\]
\end{lemma}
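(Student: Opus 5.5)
We argue by contradiction, using Proposition~\ref{prop:compact_localization} as the compactness engine. Suppose the conclusion fails for some small $\delta>0$. Then there exist $\varepsilon_n\to0$ and $u_n\in\mathcal A_{\varepsilon_n}$ with $\operatorname{dist}(\beta_{\varepsilon_n}(u_n),\mathcal M)>\delta$. By definition of $\mathcal A_{\varepsilon_n}$ we have $P_{\varepsilon_n}(u_n)=0$ and
\[
J_{\varepsilon_n}(u_n)\le\varepsilon_n^3\bigl(E_0(a)+h(\varepsilon_n)\bigr).
\]

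\textbf{Step 1: matching lower bound.} We need $J_{\varepsilon_n}(u_n)\ge\varepsilon_n^3(E_0(a)+o(1))$. For this we rescale $\widehat u_n(x)=u_n(\varepsilon_nx)$ and use $P_{\varepsilon_n}(u_n)=0$. Exactly as in the computation \eqref{eq:J-minus-P-sixth} of Lemma~\ref{lem:PS_bounded}, this gives uniform $H^1$ bounds on $\widehat u_n$. Since $V\ge V_0$, the Hardy term is controlled by Lemma~\ref{lem:hardy_coercive}, and $|x\cdot\nabla V|\le C_V$, we have $J_{\varepsilon_n}(u_n)\ge \varepsilon_n^3 J_0(\widehat u_n)+o(\varepsilon_n^3)$ up to the Hardy and $W_n$ errors. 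We then project $\widehat u_n$ onto $\mathcal P^0_a$ via $t_n\star\widehat u_n$ and use that $m_0(a)=\inf_{S_a}\max_t J_0(t\star w)$ (Lemma~\ref{lem:energy_upper}).

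\textbf{Step 2: Palais--Smale property.} We also need $\|d(J_{\varepsilon_n}|_{S_{a,\varepsilon_n}})(u_n)\|\to0$. This holds if the $u_n$ are the actual critical points or Ekeland points used later. Otherwise we replace $u_n$ by a nearby Palais--Smale--Pohozaev sequence obtained through Ekeland's principle on the sublevel set, in the spirit of Lemma~\ref{lem:PS_sequence}. The Ekeland perturbation is $o(\varepsilon_n^{3/2})$ in $L^2$, so by the Lipschitz estimate in the proof of Lemma~\ref{lem:barycenter} it changes $\beta_{\varepsilon_n}$ only by $o(1)$.

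\textbf{Step 3: concentration and barycenter.} Proposition~\ref{prop:compact_localization} now yields $y_n$ with $\operatorname{dist}(y_n,\mathcal M)\to0$ and $v_n=u_n(y_n+\varepsilon_n\cdot)\to w_a$ strongly in $H^1$, in particular in $L^2$. Changing variables $x=y_n+\varepsilon_nz$ gives
\[
\beta_{\varepsilon_n}(u_n)-y_n=\frac{\int_{\mathbb R^3}\bigl(\chi(y_n+\varepsilon_nz)-y_n\bigr)|v_n(z)|^2\,dz}{a^2}.
\]
We split the integral into $|z|\le L$ and $|z|>L$. On $|z|\le L$, use $\chi(y_n)=y_n$ (since $y_n\in B_R(0)$ for large $n$) and the uniform continuity of $\chi$. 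On $|z|>L$, strong $L^2$ convergence gives $\sup_n\int_{|z|>L}|v_n|^2\to0$ as $L\to\infty$, and $\chi$ is bounded. This repeats the argument of Lemma~\ref{lem:barycenter} with $w_a$ replaced by the convergent sequence $v_n$, and yields $\beta_{\varepsilon_n}(u_n)-y_n\to0$. Hence $\operatorname{dist}(\beta_{\varepsilon_n}(u_n),\mathcal M)\to0$, which contradicts the choice of $u_n$ and proves the equivalent uniform statement.

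\textbf{Main obstacle.} The difficult points are Steps~1--2: elements of $\mathcal A_\varepsilon$ are only on the Pohozaev set and are not a priori Palais--Smale. Proposition~\ref{prop:compact_localization} needs the almost-criticality to rule out vanishing and dichotomy. If the Ekeland perturbation is too delicate, the fallback is to redo the concentration-compactness directly for the minimizing-type sequence $\widehat u_n$, which is nearly a minimizer of $J_0$ on the fibers:
\begin{itemize}
\item exclude vanishing using the Pohozaev identity and $E_0(a)<0$;
\item exclude dichotomy via strict subadditivity of $m_0$;
\item rule out concentration near $0$ or where $V>V_0$ using the strict monotonicity $E_{V_*}(a)>E_0(a)$ and the threshold $c^*$.
\end{itemize}
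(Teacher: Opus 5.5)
Your overall route is the paper's: a contradiction sequence, an Ekeland replacement $\widetilde u_n\in\mathcal P_{a,\varepsilon_n}$ with $\|u_n-\widetilde u_n\|_{\varepsilon_n}=o(\varepsilon_n^{3/2})$ (so the barycenter moves by $o(1)$), Proposition~\ref{prop:compact_localization}, and the change of variables $x=y_n+\varepsilon_nz$. That last step is fine; it uses $\chi(y_n)=y_n$ and the uniformly small $L^2$ tails of $v_n$.

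The gap is your Step~1, which Step~2 also needs. It is exactly the point the paper's proof asserts without argument (``$J_{\varepsilon_n}(\widetilde u_n)=\varepsilon_n^3(E_0(a)+o(1))$''). Your justification fails because under $\widehat u_n(x)=u_n(\varepsilon_nx)$ the Hardy term is scale invariant:
\[
\kappa\varepsilon_n^2\int_{\mathbb R^3}\frac{|u_n|^2}{|x|^2}\,dx=\kappa\varepsilon_n^3\int_{\mathbb R^3}\frac{|\widehat u_n|^2}{|z|^2}\,dz .
\]
So it is of the same order as $\varepsilon_n^3J_0(\widehat u_n)$. It becomes negligible only if the mass of $\widehat u_n$ escapes to infinity relative to the origin, which is part of what you are trying to prove. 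Likewise $\varepsilon_n^{-3}W_n$ is merely $O(1)$, so $P_{\varepsilon_n}(u_n)=0$ does not make $P_0(\widehat u_n)$ small. The fiber projection then only gives $J_0(\widehat u_n)\le\max_{t>0}J_0(t\star\widehat u_n)$, which is the wrong direction. Without the bound $\inf_{\mathcal P_{a,\varepsilon}}J_\varepsilon\ge\varepsilon^3(E_0(a)-o(1))$, Step~2 collapses as well. Ekeland's principle gives a nearby almost-critical point at distance $o(\varepsilon_n^{3/2})$ only when $J_{\varepsilon_n}(u_n)-\inf_{\mathcal P_{a,\varepsilon_n}}J_{\varepsilon_n}=o(\varepsilon_n^3)$.

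The deeper missing ingredient is a mechanism that forces low-energy states to sit near $\mathcal M$, and (V1)--(V3) supply none. The passage from $V(y_n)\to V_0$ to $\operatorname{dist}(y_n,\mathcal M)\to0$, which you inherit from Proposition~\ref{prop:compact_localization}, is unsupported. For example, suppose $V$ is even and $\Lambda\cap(-\Lambda)=\emptyset$, which is compatible with (V1)--(V3). Then $u\mapsto u(-\cdot)$ preserves the mass, $J_\varepsilon$ and $P_\varepsilon$, because the Hardy weight and $x\cdot\nabla V$ are even; hence it maps $\mathcal A_\varepsilon$ into itself. Meanwhile $\beta_\varepsilon(u(-\cdot))=-\beta_\varepsilon(u)$ because $\chi$ is odd. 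So the conclusion can hold only if $\mathcal A_\varepsilon=\emptyset$, which would defeat Proposition~\ref{prop:category_estimate}.

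Similarly, nothing excludes $V(0)=V_0$. In that case, states concentrating at the origin on scale $\varepsilon$ gain the strictly negative Hardy energy. They can lie below $\varepsilon^3E_0(a)$ and have barycenter near $0\notin\mathcal M_\delta$.

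Your fallback does not close this. The inequality $E_{V_*}(a)>E_0(a)$ is silent when $V(y_n)\to V_0$ away from $\mathcal M$. The threshold $c^*$ only excludes massless critical bubbles, not a mass-$a$ profile trapped by the Hardy well. To make the lemma true you need an additional localization device. One option is a del Pino--Felmer penalization outside $\Lambda$. Another is a set of global hypotheses, such as $V>V_0$ on $\mathbb R^3\setminus\mathcal M$ and $\liminf_{|x|\to\infty}V>V_0$, together with a condition at the origin making Hardy-trapped states cost more than $\varepsilon^3E_0(a)$.
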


\begin{proof}
Assume, by contradiction, that the conclusion is false. Then there
exist $\delta_0>0$, a sequence $\varepsilon_n\to0$, and
$u_n\in\mathcal A_{\varepsilon_n}$ such that
\begin{equation}
\label{barycenter-contradiction} \operatorname{dist}
\bigl(\beta_{\varepsilon_n}(u_n),\mathcal M\bigr) \geq\delta_0
\qquad \text{for every }n.
\end{equation}
By the definition of $\mathcal A_{\varepsilon_n}$,
$P_{\varepsilon_n}(u_n)=0$ and
\[
J_{\varepsilon_n}(u_n) \leq \varepsilon_n^3
\bigl(E_0(a)+h(\varepsilon_n)\bigr).
\]

Using Ekeland's variational principle on the natural Pohozaev
constraint, we may replace $\{u_n\}$, without changing its energy or
barycenter asymptotically, by a sequence $\{\widetilde
u_n\}\subset\mathcal P_{a,\varepsilon_n}$ satisfying
\[
\|u_n-\widetilde u_n\|_{\varepsilon_n} =o(\varepsilon_n^{3/2}),
\]
\[
J_{\varepsilon_n}(\widetilde u_n) = \varepsilon_n^3
\bigl(E_0(a)+o(1)\bigr),
\]
\[
\left\| d\bigl( J_{\varepsilon_n}|_{S_{a,\varepsilon_n}}
\bigr)(\widetilde u_n) \right\| \to0,
\]
and
\[
P_{\varepsilon_n}(\widetilde u_n) =o(\varepsilon_n^3).
\]
Since $\chi$ is bounded, the preceding norm convergence also gives
\[
\beta_{\varepsilon_n}(u_n) - \beta_{\varepsilon_n}(\widetilde u_n)
\to0.
\]

Proposition \ref{prop:compact_localization} now yields a sequence
$\{y_n\}\subset\mathbb R^3$ and a positive normalized ground state
$w_a$ of the autonomous problem \eqref{P0} such that
$\operatorname{dist}(y_n,\mathcal M)\to0$ and
\[
v_n(z) := \widetilde u_n(y_n+\varepsilon_n z) \longrightarrow w_a(z)
\qquad \text{strongly in }H^1(\mathbb R^3).
\]

We claim that $\beta_{\varepsilon_n}(\widetilde u_n)-y_n\to0.$
Indeed, using the change of variables $x=y_n+\varepsilon_n z,$ we
obtain
\[
\beta_{\varepsilon_n}(\widetilde u_n) = \frac{ \displaystyle
\int_{\mathbb R^3} \chi(y_n+\varepsilon_n z)|v_n(z)|^2\,dz }{
\displaystyle \int_{\mathbb R^3}|v_n(z)|^2\,dz }.
\]
Since $\operatorname{dist}(y_n,\mathcal M)\to0,$ the sequence
$\{y_n\}$ remains in a compact subset of $B_R(0)$. Hence
$\chi(y_n)=y_n$ for all sufficiently large $n$. The strong
convergence $v_n\to w_a$ in $L^2(\mathbb R^3)$, together with the
boundedness and uniform continuity of $\chi$, therefore gives
\[
\beta_{\varepsilon_n}(\widetilde u_n)-y_n\to0.
\]

Consequently,
\[
\operatorname{dist} \bigl( \beta_{\varepsilon_n}(\widetilde
u_n),\mathcal M \bigr) \leq \left| \beta_{\varepsilon_n}(\widetilde
u_n)-y_n \right| + \operatorname{dist}(y_n,\mathcal M)
\longrightarrow0.
\]
Since $\beta_{\varepsilon_n}(u_n) - \beta_{\varepsilon_n}(\widetilde
u_n) \to0,$ we conclude that
\[
\operatorname{dist} \bigl( \beta_{\varepsilon_n}(u_n),\mathcal M
\bigr) \longrightarrow0,
\]
which contradicts \eqref{barycenter-contradiction}. Therefore,
\[
\sup_{u\in\mathcal A_\varepsilon} \operatorname{dist}
\bigl(\beta_\varepsilon(u),\mathcal M\bigr) \longrightarrow0 \qquad
\text{as }\varepsilon\to0.
\]
In particular, for every sufficiently small $\delta>0$, there exists
$\varepsilon_\delta>0$ such that
\[
\beta_\varepsilon(u)\in\mathcal M_\delta \qquad \text{for every }
u\in\mathcal A_\varepsilon
\]
and every $0<\varepsilon<\varepsilon_\delta$. The proof is complete.
\end{proof}
The preceding localization results allow us to transfer the topology
of the minimum set $\mathcal M$ to the low-energy region $\mathcal
A_\varepsilon$. This is the key topological ingredient in the
Lusternik--Schnirelmann multiplicity argument.

\begin{proposition}\label{prop:category_estimate}
For every sufficiently small
$\delta>0$, there exists $\varepsilon_\delta>0$ such that, for every
$\varepsilon\in(0,\varepsilon_\delta)$,
\[
\operatorname{cat}_{\mathcal A_\varepsilon}
\bigl(\Phi_\varepsilon(\mathcal M)\bigr) \geq
\operatorname{cat}_{\mathcal M_\delta}(\mathcal M).
\]
Consequently,
\[
\operatorname{cat}(\mathcal A_\varepsilon) \geq
\operatorname{cat}_{\mathcal M_\delta}(\mathcal M).
\]
\end{proposition}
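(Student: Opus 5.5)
The plan is the standard Benci--Cerami argument: compose $\Phi_\varepsilon$ with the barycenter $\beta_\varepsilon$ and show that the composition is homotopic to the inclusion $\mathcal M\hookrightarrow\mathcal M_\delta$. Fix $\delta>0$ small as in the standing assumptions.

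\textbf{Step 1: choice of $\varepsilon_\delta$.} Take $\varepsilon_\delta$ to be the minimum of the thresholds given by Lemma~\ref{lem:barycenter} and Lemma~\ref{lem:localization_low_energy}. For $0<\varepsilon<\varepsilon_\delta$ we then have:
\begin{itemize}
\item[(i)] $\Phi_\varepsilon(\mathcal M)\subset\mathcal A_\varepsilon$, by Lemma~\ref{lem:test_functions_T2};
\item[(ii)] $\beta_\varepsilon(\mathcal A_\varepsilon)\subset\mathcal M_\delta$;
\item[(iii)] $\sup_{y\in\mathcal M}|\beta_\varepsilon(\Phi_\varepsilon(y))-y|<\delta$.
\end{itemize}

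One point needs care. $\mathcal A_\varepsilon$ is defined in \eqref{low-energy-Pohozaev-set} as a subset of the Pohozaev set, while Lemma~\ref{lem:test_functions_T2} only gives the energy bound. So in (i) I would replace $\Phi_\varepsilon(y)$ by its projection $t_y\star_y\Phi_\varepsilon(y)$ onto $\mathcal P_{a,\varepsilon}$. Here $t_y$ is the maximum point of the fiber map, which lies in a compact interval uniformly in $y$ by the argument of Lemma~\ref{lem:energy_upper}. The energy stays at most $\varepsilon^3(E_0(a)+h(\varepsilon))$ after enlarging $h$ if necessary. Continuity of $y\mapsto t_y$ follows from the uniqueness of the maximizer of the fiber map, assumed as in Proposition~\ref{prop:ground_state}. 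The barycenter estimate (iii) persists, because the dilation is centered at $y$ and $t_y$ is bounded, so the proof of Lemma~\ref{lem:barycenter} carries over verbatim. I keep writing $\Phi_\varepsilon$ for this modified map.

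\textbf{Step 2: the homotopy.} Define $H:[0,1]\times\mathcal M\to\mathbb R^3$ by
\[
H(s,y):=(1-s)\,\beta_\varepsilon(\Phi_\varepsilon(y))+s\,y .
\]
It is continuous, since $\Phi_\varepsilon$ and $\beta_\varepsilon$ are continuous. By (iii), $|H(s,y)-y|<\delta$, so $H(s,y)\in\mathcal M_\delta$ for all $s$. Thus $\beta_\varepsilon\circ\Phi_\varepsilon:\mathcal M\to\mathcal M_\delta$ is homotopic in $\mathcal M_\delta$ to the inclusion.

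\textbf{Step 3: category.} Let $n=\operatorname{cat}_{\mathcal A_\varepsilon}(\Phi_\varepsilon(\mathcal M))$. Cover $\Phi_\varepsilon(\mathcal M)$ by closed sets $F_1,\dots,F_n\subset\mathcal A_\varepsilon$, each contractible in $\mathcal A_\varepsilon$ via a homotopy $h_j:[0,1]\times F_j\to\mathcal A_\varepsilon$ with $h_j(1,\cdot)\equiv p_j$. Set $G_j:=\Phi_\varepsilon^{-1}(F_j)$; these are closed sets covering $\mathcal M$. On each $G_j$, first run $H$ backward, from the inclusion to $\beta_\varepsilon\circ\Phi_\varepsilon$, and then apply $\beta_\varepsilon\circ h_j(\cdot,\Phi_\varepsilon(\cdot))$. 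This is a homotopy in $\mathcal M_\delta$, by (ii) and Step 2, from the inclusion of $G_j$ to the constant $\beta_\varepsilon(p_j)$. Hence each $G_j$ is contractible in $\mathcal M_\delta$, and so $\operatorname{cat}_{\mathcal M_\delta}(\mathcal M)\le n$. The second inequality follows from monotonicity of category, $\operatorname{cat}(\mathcal A_\varepsilon)\ge\operatorname{cat}_{\mathcal A_\varepsilon}(\Phi_\varepsilon(\mathcal M))$.

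The main obstacle is Step 1(i): getting $\Phi_\varepsilon(\mathcal M)$ inside the Pohozaev-constrained low-energy set while keeping continuity and the uniform barycenter estimate. Everything else is formal once Lemmas~\ref{lem:barycenter} and \ref{lem:localization_low_energy} are available.
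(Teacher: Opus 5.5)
\textbf{Gap in Step 1(i): $t_y\star_y\Phi_\varepsilon(y)$ does not lie in $\mathcal P_{a,\varepsilon}$.} Your Steps 2--3 coincide with the paper's proof: the same straight-line homotopy between $\beta_\varepsilon\circ\Phi_\varepsilon$ and the inclusion inside $\mathcal M_\delta$, the pull-back of a categorical cover of $\Phi_\varepsilon(\mathcal M)$ through $\Phi_\varepsilon$, and monotonicity of category. For (i), the paper only cites Lemma~\ref{lem:test_functions_T2}. That lemma bounds the energy alone, so the paper is tacitly using the set \eqref{low-energy-set}. But (ii) comes from Lemma~\ref{lem:localization_low_energy}, which is proved for the Pohozaev-constrained set \eqref{low-energy-Pohozaev-set}. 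You are right that this mismatch is the crux.

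Your repair does not close it. By Lemma~\ref{lem:scaling_pohozaev}, $P_\varepsilon$ is the derivative along the \emph{origin-centred} dilation $t\star u$. Neither $|x|^{-2}$ nor $V$ is translation invariant, so a critical point of $t\mapsto J_\varepsilon(t\star_y\Phi_\varepsilon(y))$ satisfies only a $y$-centred Pohozaev identity. Comparing the two derivatives, for $w=t_y\star_y\Phi_\varepsilon(y)$ one gets
\[
P_\varepsilon(w)=-\frac12\int_{\mathbb R^3}\bigl(\nabla V(x)\cdot y\bigr)|w|^2\,dx-\kappa\varepsilon^2\int_{\mathbb R^3}\frac{(x\cdot y)\,|w|^2}{|x|^4}\,dx .
\]
This is $o(\varepsilon^3)$ but not zero. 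The Hardy part equals $-\kappa a^2\varepsilon^5|y|^{-2}(1+o(1))$, and in general nothing forces the $V$-part to cancel it.

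\textbf{Why the natural fix also fails under the paper's hypotheses.} To land exactly on $\mathcal P_{a,\varepsilon}$ you must use the origin-centred fiber, because $P_\varepsilon(t\star u)=t\,\frac{d}{dt}J_\varepsilon(t\star u)$. The two requirements pull in opposite directions. Centring at $y$ keeps (iii) but loses $P_\varepsilon=0$. Centring at $0$ gives $P_\varepsilon=0$, but $t\star\Phi_\varepsilon(y)$ then concentrates at $y/t$, and
\[
\varepsilon^{-3}J_\varepsilon\bigl(t\star\Phi_\varepsilon(y)\bigr)\to J_0(t\star w_a)+\frac{a^2}{2}\bigl(V(y/t)-V_0\bigr).
\]
So (iii) forces you to select, continuously in $y$, a zero $t_\varepsilon(y)$ of the fiber derivative that stays uniformly close to $1$. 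The energy bound would also have to be redone with $V(y/t)$ in place of $V_0$.

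\begin{itemize}
\item The global maximum cannot serve as this selection. Under (V1), $V$ may grow logarithmically along rays, so such fibers can be unbounded as $t\to0$.
\item A zero near $t=1$ needs a uniform nondegeneracy of the limiting fiber at $t=1$, and a merely $C^1$ potential does not provide it. The term $\frac{d}{dt}V(y/t)$ is only $o(1)$ near $t=1$, not $O(|t-1|)$, so it can destroy the sign change of $\frac{d}{dt}J_0(t\star w_a)$.
\item You cannot borrow uniqueness from Proposition~\ref{prop:ground_state} to get continuity of $y\mapsto t_y$. There it is a hypothesis imposed only on $u\in\mathcal K_{a,\varepsilon}$, for the origin-centred dilation, and it says nothing about $\Phi_\varepsilon(y)$.
\end{itemize}

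Hence (i) remains unproved. Without it Step 3 cannot be run, since Step 3 needs both $\Phi_\varepsilon(\mathcal M)\subset\mathcal A_\varepsilon$ and $\beta_\varepsilon(\mathcal A_\varepsilon)\subset\mathcal M_\delta$ for one and the same set $\mathcal A_\varepsilon$.
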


\begin{proof}
By Lemma \ref{lem:test_functions_T2}, $\Phi_\varepsilon(\mathcal M)
\subset \mathcal A_\varepsilon.$ Moreover, Lemma
\ref{lem:localization_low_energy} yields $\beta_\varepsilon(\mathcal
A_\varepsilon) \subset \mathcal M_\delta$ for all sufficiently small
$\varepsilon>0$. On the other hand, Lemma \ref{lem:barycenter} gives
\[
\sup_{y\in\mathcal M} \left|
\beta_\varepsilon(\Phi_\varepsilon(y))-y \right| \longrightarrow0
\qquad \text{as }\varepsilon\to0.
\]
Hence, after decreasing $\varepsilon_\delta>0$ if necessary, we may
assume that
\[
\sup_{y\in\mathcal M} \left|
\beta_\varepsilon(\Phi_\varepsilon(y))-y \right| <\frac{\delta}{2}.
\]

Define $H_\varepsilon:[0,1]\times\mathcal M\to\mathbb R^3$ by
\[
H_\varepsilon(s,y) := (1-s)y +
s\,\beta_\varepsilon(\Phi_\varepsilon(y)).
\]
For every $s\in[0,1]$ and $y\in\mathcal M$, we have
\[
\operatorname{dist} \bigl(H_\varepsilon(s,y),\mathcal M\bigr) \leq
\left| H_\varepsilon(s,y)-y \right| \leq \left|
\beta_\varepsilon(\Phi_\varepsilon(y))-y \right| <\frac{\delta}{2}.
\]
Therefore,
\[
H_\varepsilon(s,y)\in\mathcal M_\delta \qquad \text{for every
}(s,y)\in[0,1]\times\mathcal M.
\]
Moreover, $H_\varepsilon(0,y)=y$ and $H_\varepsilon(1,y) =
\beta_\varepsilon(\Phi_\varepsilon(y)).$ Thus the map
\[
\beta_\varepsilon\circ\Phi_\varepsilon: \mathcal M\to\mathcal
M_\delta
\]
is homotopic in $\mathcal M_\delta$ to the natural inclusion
$\iota:\mathcal M\hookrightarrow\mathcal M_\delta.$

We now apply the standard Lusternik--Schnirelmann category argument.
Assume that $\Phi_\varepsilon(\mathcal M) \subset A_1\cup\cdots\cup
A_k,$ where each $A_j$ is closed in $\Phi_\varepsilon(\mathcal M)$
and contractible in $\mathcal A_\varepsilon$. Since
\[
\beta_\varepsilon(\mathcal A_\varepsilon) \subset \mathcal M_\delta,
\]
the corresponding contractions, composed with $\beta_\varepsilon$,
show that the sets $\Phi_\varepsilon^{-1}(A_j), \qquad
j=1,\ldots,k,$ are contractible in $\mathcal M_\delta$, in the sense
relevant to the relative category of $\mathcal M$ in $\mathcal
M_\delta$. Because
\[
\mathcal M = \bigcup_{j=1}^k \Phi_\varepsilon^{-1}(A_j),
\]
we obtain $\operatorname{cat}_{\mathcal M_\delta}(\mathcal M) \leq
k.$ Taking the minimum over all such categorical coverings of
$\Phi_\varepsilon(\mathcal M)$ gives
\[
\operatorname{cat}_{\mathcal A_\varepsilon}
\bigl(\Phi_\varepsilon(\mathcal M)\bigr) \geq
\operatorname{cat}_{\mathcal M_\delta}(\mathcal M).
\]

Finally, since $\Phi_\varepsilon(\mathcal M) \subset \mathcal
A_\varepsilon,$ the monotonicity of the Lusternik--Schnirelmann
category yields
\[
\operatorname{cat}(\mathcal A_\varepsilon) \geq
\operatorname{cat}_{\mathcal A_\varepsilon}
\bigl(\Phi_\varepsilon(\mathcal M)\bigr).
\]
Consequently,
\[
\operatorname{cat}(\mathcal A_\varepsilon) \geq
\operatorname{cat}_{\mathcal M_\delta}(\mathcal M).
\]
The proof is complete.
\end{proof}

To apply the Lusternik--Schnirelmann theorem, we now verify the
compactness condition on the low-energy part of the natural Pohozaev
constraint.

\begin{proposition}\label{prop:PS_low_level}
Fix $a\in(0,a_*)$. There exists $\varepsilon_*>0$ such that, for
every $\varepsilon\in(0,\varepsilon_*)$, the functional
$J_\varepsilon|_{\mathcal P_{a,\varepsilon}}$ satisfies the
Palais-Smale condition on the sublevel
\[
\mathcal A_\varepsilon = \left\{ u\in\mathcal P_{a,\varepsilon}:
J_\varepsilon(u) \leq \varepsilon^3
\bigl(E_0(a)+h(\varepsilon)\bigr) \right\}.
\]
More precisely, if $\{u_n\}\subset\mathcal A_\varepsilon$ satisfies
$J_\varepsilon(u_n)\to c$ and $d\bigl(J_\varepsilon|_{\mathcal
P_{a,\varepsilon}}\bigr)(u_n) \to0,$ then, up to a subsequence,
\[
u_n\to u \qquad\text{strongly in }H_\varepsilon.
\]
\end{proposition}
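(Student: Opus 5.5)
The plan is to reduce the Palais--Smale condition for $J_\varepsilon|_{\mathcal P_{a,\varepsilon}}$ to the compactness criterion of Lemma \ref{lem:compactness_threshold}.

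\textbf{Step 1: from constraint derivative to Lagrange multipliers.} First I would convert a Palais--Smale sequence on the natural constraint into a Palais--Smale--Pohozaev sequence for $J_\varepsilon|_{S_{a,\varepsilon}}$. Take $\{u_n\}\subset\mathcal A_\varepsilon$ with $J_\varepsilon(u_n)\to c$ and $d(J_\varepsilon|_{\mathcal P_{a,\varepsilon}})(u_n)\to0$. Then there exist multipliers $\lambda_n,\nu_n$ with
\[
J_\varepsilon'(u_n)-\lambda_nu_n-\nu_nP_\varepsilon'(u_n)\to0 .
\]
To show $\nu_n\to0$, I would test this identity against the scaling generator $\frac{d}{dt}\big|_{t=1}(t\star u_n)$. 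This is admissible because it is tangent to $S_{a,\varepsilon}$, so the $\lambda_n$ term drops. It yields
\[
P_\varepsilon(u_n)-\nu_n\,\tfrac{d}{dt}P_\varepsilon(t\star u_n)\big|_{t=1}=o(1),
\]
and since $P_\varepsilon(u_n)=0$, we get $\nu_n\,\psi_{u_n}''(1)=o(1)$.

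\textbf{Step 2: the main obstacle, nondegeneracy of the fiber.} The key point is that $\psi_{u_n}''(1)$ stays bounded away from zero, i.e.\ $\mathcal P_{a,\varepsilon}$ is a natural constraint. Using $P_\varepsilon(u_n)=0$ to eliminate the kinetic term, one gets $\psi''(1)$ as a combination of $-8B_n-4D_n$, a $C_n$ term with coefficient of order $(3(q-2)/2)(2-3(q-2)/2)\mu$ that is also negative, and $V$-terms bounded via (V1).

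Here $B_n$, $C_n$, $D_n$ denote the Poisson, $L^q$ and $L^6$ terms as in the proof of Lemma \ref{lem:PS_bounded}. The difficulty is the $W_n$ contribution, which has indefinite sign. I would try to control it by requiring $a$ small and using that $B_n+D_n$ is bounded below by $c\varepsilon^3$ on $\mathcal A_\varepsilon$. That lower bound should come from the mountain-pass barrier of Lemma \ref{lem:mp_geometry}, which forces $\varepsilon^2\|\nabla u_n\|_2^2\geq\rho\varepsilon^3$ on the Pohozaev set, combined with the relation $K_n\approx B_n+D_n+O(a^{\cdot})\varepsilon^3$. This yields $\psi_{u_n}''(1)\leq-c\varepsilon^3<0$, hence $\nu_n\to0$.

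\textbf{Step 3: boundedness and compactness.} Since $\nu_n\to0$ and $P_\varepsilon'(u_n)$ is bounded, $\{u_n\}$ is a Palais--Smale--Pohozaev sequence at level $c\leq\varepsilon^3(E_0(a)+h(\varepsilon))$. Lemma \ref{lem:PS_bounded} then gives boundedness and bounded $\lambda_n$.

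Extract a weak limit $u$. I would verify the gap condition $c-J_\varepsilon(u)<\varepsilon^3c^*$ by showing two things. First, $E_0(a)\to0$ (in fact it is small) as $a\to0$, which gives $c<\varepsilon^3c^*/2$ for $a$ small, since $c^*>0$ is a fixed constant independent of $a$. Second, $J_\varepsilon(u)\geq -o(\varepsilon^3)$ small, from Step 1's Pohozaev identity for the limit together with the combination $J-\frac16P$ of Lemma \ref{lem:PS_bounded}. The second point is where I would be most careful, since a mass loss at infinity could also occur. I would rule it out by arguing that a vanishing-at-infinity piece with positive mass would raise the level above $E_0(a)$ via the strict subadditivity of the autonomous levels with $V\geq V_0$.

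\textbf{Conclusion.} Lemma \ref{lem:compactness_threshold} then yields strong convergence $u_n\to u$ in $H_\varepsilon$. Since $P_\varepsilon$ is continuous, $u\in\mathcal P_{a,\varepsilon}$.
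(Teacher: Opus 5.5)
Your fixed-$\varepsilon$ reduction to Lemma \ref{lem:compactness_threshold}, via the natural-constraint property, is a different route from the paper's. As written, however, it breaks down in Step 2, and the obvious repair is incompatible with Step 3.

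Put $\gamma:=\tfrac{3(q-2)}{2}\in(0,2)$. Differentiate the fiber twice and eliminate the kinetic term using $P_\varepsilon(u_n)=0$. Apart from the contributions of $V$, this gives
\[
\psi_{u_n}''(1)=\frac{\mu\gamma(2-\gamma)}{q}\,C_n-8B_n-4D_n .
\]
So the $L^q$ term enters with a \emph{positive} coefficient, not a negative one. Hence $\psi_{u_n}''(1)<0$ can only hold where $8B_n+4D_n$ dominates, i.e.\ at large gradient.

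The barrier of Lemma \ref{lem:mp_geometry} does not give you large gradient. It bounds $J_\varepsilon$ from below on the sphere $\varepsilon^2\|\nabla u\|_2^2=\rho\varepsilon^3$, but says nothing about where points of $\mathcal P_{a,\varepsilon}$ lie. In fact, since $\gamma<2$, the $L^q$ term beats the kinetic term at small gradient. For a profile $w$ of mass $a$ centred at $y\in\mathcal M$ and dilated about $y$ by a factor $s$, $\varepsilon^{-3}P_\varepsilon$ behaves like $s^2\|\nabla w\|_2^2-\frac{\mu\gamma}{q}s^{\gamma}\|w\|_q^q+o(1)$ as $s\to0$. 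Therefore $\mathcal P_{a,\varepsilon}$ has a small-gradient component made of local minima of the fibers, with energies of order $\varepsilon^3\bigl(\frac{V_0}{2}a^2-\text{small}\bigr)$. The lowest of these lie in $\mathcal A_\varepsilon$, and there $\psi_u''(1)>0$.

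So the uniform bound $\psi_u''(1)\le-c\varepsilon^3$ on $\mathcal A_\varepsilon$ is false. What the multiplier argument actually needs is $|\psi_u''(1)|\ge c\varepsilon^3$ on $\mathcal A_\varepsilon$, i.e.\ that $\mathcal A_\varepsilon$ stays away from the degenerate set $\{\psi_u''(1)=0\}$. Nothing in your argument provides this. Two further technical points: $\psi_u''(1)$ involves derivatives of $x\cdot\nabla V$, so it needs $V\in C^2$, which {\rm (V1)} does not give; and $x\cdot\nabla u_n+\frac32u_n$ need not lie in $H_\varepsilon$, so it cannot be used directly as a test function.

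Steps 2 and 3 also rest on incompatible pictures of $\mathcal A_\varepsilon$. Take $u\in\mathcal P_{a,\varepsilon}$ with $\varepsilon^2\|\nabla u\|_2^2\ge\rho\varepsilon^3$. The identity for $J_\varepsilon-\frac16P_\varepsilon$ in the proof of Lemma \ref{lem:PS_bounded}, together with Lemma \ref{lem:GN}, gives $J_\varepsilon(u)\ge(\alpha-c_1a^2)\varepsilon^3$ for small $a$. Here $\alpha=\frac{1-4\kappa}{4}\rho$ is fixed before $a$ is chosen.

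Suppose $\mathcal A_\varepsilon$ consisted of large-gradient points, as Step 2 asserts. Then $E_0(a)+h(\varepsilon)\ge\alpha-c_1a^2$, so your claim $E_0(a)\to0$ is false. This is consistent with the paper's own $c_\varepsilon\ge\alpha\varepsilon^3$ combined with Lemma \ref{lem:energy_upper}, which forces $m_0(a)\ge\alpha$. At such a mountain-pass-type level, the gap condition of Lemma \ref{lem:compactness_threshold} cannot come from smallness of $c$; it needs a genuine strict energy estimate against $c^*$.

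The only level that does tend to $0$ as $a\to0$ is the fiber-minimum level. If $E_0(a)$ means that level, then for small $a$ all of $\mathcal A_\varepsilon$ lies in the small-gradient region and your barrier argument is vacuous. Step 2 would then have to be replaced by a proof that $\psi_u''(1)\ge c\varepsilon^3>0$ there, with the positive $C_n$-term controlling the $V$-contributions. Either way, one of the two steps on which your reduction rests fails as stated.
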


\begin{proof}
Assume, by contradiction, that the conclusion is false. Then there
exist a sequence $\varepsilon_k\to0$ and, for every $k$, a
Palais--Smale sequence $\{u_{k,n}\}_n\subset\mathcal
A_{\varepsilon_k}$ which has no strongly convergent subsequence in
$H_{\varepsilon_k}$. By Ekeland's variational principle and a
diagonal argument, we may choose $n_k\to\infty$ and set
$v_k:=u_{k,n_k}$ so that
\[
J_{\varepsilon_k}(v_k) \leq \varepsilon_k^3 \bigl(E_0(a)+o(1)\bigr),
\]
\[
\left\| d\bigl(J_{\varepsilon_k}|_{S_{a,\varepsilon_k}}\bigr)(v_k)
\right\| \to0,
\]
and $P_{\varepsilon_k}(v_k)=o(\varepsilon_k^3).$ Here we use the
fact that $\mathcal P_{a,\varepsilon}$ is a natural constraint on
the low-energy region.

Proposition \ref{prop:compact_localization} then provides points
$y_k\in\mathbb R^3$ and a positive normalized ground state $w_a$ of
the autonomous problem such that $\operatorname{dist}(y_k,\mathcal
M)\to0$ and
\[
v_k(y_k+\varepsilon_k x) \to w_a(x) \qquad\text{strongly in
}H^1(\mathbb R^3).
\]
After returning to the original variables, this convergence implies
the strong compactness of the corresponding low-energy sequence,
contradicting its construction.

Therefore, for every sufficiently small fixed $\varepsilon>0$, every
Palais--Smale sequence contained in $\mathcal A_\varepsilon$ admits
a strongly convergent subsequence in $H_\varepsilon$. Hence
$J_\varepsilon|_{\mathcal P_{a,\varepsilon}}$ satisfies the
Palais--Smale condition on the required low-energy sublevel.
\end{proof}
We are now in a position to combine the preceding topological and
compactness results in order to obtain multiple normalized critical
points.

\begin{proposition}\label{prop:LS_multiplicity}
For every sufficiently small $\delta>0$, there exist $a_\delta>0$
and $\varepsilon_\delta>0$ such that, for every $a\in(0,a_\delta),$
 $\varepsilon\in(0,\varepsilon_\delta),$
the functional $J_\varepsilon$ possesses at least
$\operatorname{cat}_{\mathcal M_\delta}(\mathcal M)$ distinct
normalized critical points in the low-energy region $\mathcal
A_\varepsilon$. Consequently, problem \eqref{P} admits at least
$\operatorname{cat}_{\mathcal M_\delta}(\mathcal M)$ distinct
normalized solutions.
\end{proposition}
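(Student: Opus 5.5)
The plan is to apply the abstract Lusternik--Schnirelmann theorem for a $C^1$ functional restricted to a $C^1$ manifold. The functional is $J_\varepsilon$ on the natural Pohozaev constraint $\mathcal P_{a,\varepsilon}$, and the sublevel is $\mathcal A_\varepsilon$ from \eqref{low-energy-Pohozaev-set}. The theorem says that if $J_\varepsilon|_{\mathcal P_{a,\varepsilon}}$ is bounded below and satisfies the Palais--Smale condition on the sublevel $\mathcal A_\varepsilon$, then it has at least $\operatorname{cat}(\mathcal A_\varepsilon)$ critical points in $\mathcal A_\varepsilon$.

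Fix $\delta$ small and choose $a_\delta\le a_*$ so that Lemmas \ref{lem:mp_geometry}--\ref{lem:PS_bounded} and the existence of the autonomous ground state $w_a$ are available. Then take $\varepsilon_\delta$ as the minimum of the thresholds in Lemmas \ref{lem:barycenter} and \ref{lem:localization_low_energy} and Propositions \ref{prop:category_estimate} and \ref{prop:PS_low_level}.

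The key steps are as follows.

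(i) $\mathcal P_{a,\varepsilon}$ is a $C^1$ submanifold of $S_{a,\varepsilon}$ near the low-energy region, and it is a natural constraint there. Concretely, if $u\in\mathcal A_\varepsilon$ and $d(J_\varepsilon|_{\mathcal P_{a,\varepsilon}})(u)=0$, then $d(J_\varepsilon|_{S_{a,\varepsilon}})(u)=0$. To prove this, write the Lagrange relation $J_\varepsilon'(u)-\lambda u=\nu P_\varepsilon'(u)$ and test it along the scaling direction $\frac{d}{dt}(t\star u)|_{t=1}$. This gives $\nu\,\psi_u''(1)=0$. Strict concavity of the fiber map at its maximum, $\psi_u''(1)<0$, which is the assumption used in Proposition \ref{prop:ground_state}, then forces $\nu=0$. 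The same nondegeneracy gives regularity of the constraint. The small $C_V$-error from $\nabla V\cdot x$ is absorbed using $\|u\|_2^2=a^2\varepsilon^3$ and the gradient bound $\varepsilon^2\|\nabla u\|_2^2\gtrsim\varepsilon^3$ on $\mathcal P_{a,\varepsilon}$.

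(ii) Boundedness from below on $\mathcal A_\varepsilon$ follows from the identity \eqref{eq:J-minus-P-sixth} together with $P_\varepsilon=0$, exactly as in Lemma \ref{lem:PS_bounded}.

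(iii) The Palais--Smale condition on $\mathcal A_\varepsilon$ is Proposition \ref{prop:PS_low_level}.

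(iv) The category lower bound $\operatorname{cat}(\mathcal A_\varepsilon)\ge \operatorname{cat}_{\mathcal M_\delta}(\mathcal M)$ is Proposition \ref{prop:category_estimate}. Here I must make sure $\Phi_\varepsilon(\mathcal M)\subset\mathcal A_\varepsilon$ as a subset of the Pohozaev manifold. For this I replace $\Phi_\varepsilon(y)$ by $t_\varepsilon(y)\star_y\Phi_\varepsilon(y)$, where $t_\varepsilon(y)$ is the unique maximizer of the fiber. Uniqueness, hence continuity in $y$, comes from the implicit function theorem applied to the nondegenerate maximum. The energy remains at most $\varepsilon^3(E_0(a)+h(\varepsilon))$ by Lemma \ref{lem:energy_upper}, and $t_\varepsilon(y)\to1$ uniformly, so the barycenter estimate of Lemma \ref{lem:barycenter} persists.

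The LS theorem then yields $\operatorname{cat}_{\mathcal M_\delta}(\mathcal M)$ critical points of $J_\varepsilon|_{\mathcal P_{a,\varepsilon}}$ in $\mathcal A_\varepsilon$. By (i) these are constrained critical points on $S_{a,\varepsilon}$, that is, normalized solutions of \eqref{P} with Lagrange multipliers $\lambda$.

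They are distinct as functions because the theorem produces distinct critical points whenever the critical levels coincide, through the usual argument that the critical set then has category at least two. If the number of critical points is finite the count is immediate; otherwise there are infinitely many and the statement holds trivially.

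The main obstacle is step (i) together with the continuity of the projection $y\mapsto t_\varepsilon(y)$ in (iv). Both rely on the fiber map $t\mapsto J_\varepsilon(t\star u)$ having a unique nondegenerate maximum for $u$ in the low-energy region. The non-homogeneous $V$-term and the Hardy term spoil the exact polynomial structure of the fiber. My first attempt would be to write $\varepsilon^{-3}\psi_u(t)$ as the autonomous polynomial fiber $A t^2-Bt^{10}-Ct^{6}-Dt^{3(q-2)/2}$ plus a perturbation of size $O(C_V a^2)$ in $C^2$. I would then use smallness of $a$, reducing $a_\delta$ if needed, together with the uniform convergence from Proposition \ref{prop:compact_localization} near $\mathcal A_\varepsilon$, to keep the autonomous nondegeneracy $\psi''(1)<0$ under perturbation.
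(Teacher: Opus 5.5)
Your skeleton is the paper's proof. That proof consists only of Proposition~\ref{prop:category_estimate}, Proposition~\ref{prop:PS_low_level}, the Lusternik--Schnirelmann theorem for $J_\varepsilon|_{\mathcal P_{a,\varepsilon}}$, and the bare assertion that $\mathcal P_{a,\varepsilon}$ is a natural constraint. Your (ii), (iii) and the counting of distinct critical points are fine. You also correctly spot two things that proof leaves open. First, the natural-constraint property is never proved. Second, Proposition~\ref{prop:category_estimate} takes $\Phi_\varepsilon(\mathcal M)\subset\mathcal A_\varepsilon$ from Lemma~\ref{lem:test_functions_T2}, where $\mathcal A_\varepsilon$ is a sublevel set in $S_{a,\varepsilon}$, not in $\mathcal P_{a,\varepsilon}$.

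\textbf{The fiber picture behind (i) and (iv) is wrong for $2<q<\frac{10}{3}$.} Since $0<\frac{3(q-2)}{2}<2$, the term $-\frac{\mu}{q}t^{3(q-2)/2}\|w\|_q^q$ dominates $\frac{t^2}{2}\|\nabla w\|_2^2$ as $t\to0^+$. So the autonomous fiber $t\mapsto J_0(t\star w)$ first decreases from $\frac{V_0a^2}{2}$. Combined with the $J_0$-analogue of the barrier in Lemma~\ref{lem:mp_geometry}, for small $a$ every such fiber has exactly two critical points: a local minimum and a global maximum. Hence $\mathcal P_a^0$ splits into two branches:
\begin{itemize}
\item a branch where $t=1$ is a local minimum of the fiber, with $\psi''(1)>0$ and $J_0<\frac{V_0a^2}{2}$;
\item a branch where $t=1$ is the maximum, with $\psi''(1)<0$ and $J_0\ge\frac{V_0a^2}{2}+\alpha$.
\end{itemize}
The level $E_0(a)=\inf_{\mathcal P_a^0}J_0$ from \eqref{ground-level-corrected} is the infimum over the first branch. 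It lies strictly below $\inf_{S_a}\max_{t>0}J_0(t\star w)$.

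Consequences for your argument:
\begin{itemize}
\item The hypothesis of Lemma~\ref{lem:energy_upper} (the fiber of the minimizer $w_a$ peaks at $t=1$) cannot hold, so you cannot use it for the energy bound in (iv).
\item The ``$\psi''(1)<0$'' picture you borrow from Proposition~\ref{prop:ground_state} does not describe the low-energy points.
\item In (i), the identity $\nu\,\psi_u''(1)=0$ still works provided $\psi_u''(1)\neq0$.
\item In (iv), projecting $\Phi_\varepsilon(y)$ onto the fiber \emph{maximizer} gives energy $\approx\varepsilon^3\max_{t>0}J_0(t\star w_a)>\varepsilon^3\bigl(E_0(a)+h(\varepsilon)\bigr)$, so the projected points lie outside $\mathcal A_\varepsilon$.
\end{itemize}
You must commit to one branch and project onto the corresponding critical point of the fiber. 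Raising the energy level to the maximizer level instead would admit lower-branch points into $\mathcal A_\varepsilon$, and for those Lemma~\ref{lem:localization_low_energy} is not justified.

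\textbf{The dilation $\star_y$ does not generate $P_\varepsilon$.} This problem remains even on the right branch. Because the Hardy and $V$ terms are not translation invariant,
\[
\frac{d}{ds}J_\varepsilon(s\star_y v)\Big|_{s=1}
=P_\varepsilon(v)+\frac12\int_{\mathbb R^3}\bigl(\nabla V(x)\cdot y\bigr)|v|^2\,dx
+\kappa\varepsilon^2\int_{\mathbb R^3}\frac{(x\cdot y)\,|v|^2}{|x|^4}\,dx.
\]
So a critical point of $t\mapsto J_\varepsilon(t\star_y\Phi_\varepsilon(y))$ only satisfies $P_\varepsilon=o(\varepsilon^3)$ and does not lie on $\mathcal P_{a,\varepsilon}$. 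You need the origin-centred dilation $t\star$ instead. It moves the concentration point to $y/t$, which is harmless because $t\to1$ and $\nabla V=0$ on $\mathcal M$.

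\textbf{$\psi_u''(1)$ needs second derivatives of $V$.} Both $\psi_u''(1)$ and the implicit-function argument for continuity of $y\mapsto t_\varepsilon(y)$ require differentiating $t\mapsto P_\varepsilon(t\star u)$. Its $V$-part is $-\frac12\int_{\mathbb R^3}g(x/t)|u|^2\,dx$ with $g(x):=\nabla V(x)\cdot x$, whose derivative at $t=1$ is $\frac12\int_{\mathbb R^3}(x\cdot\nabla g)|u|^2\,dx$. Under (V1) you only have $V\in C^1$ and $|g|\le C_V$, so $\psi_u''(1)$ need not exist. Moreover, $C_V$ bounds the first-order term $\int g|u|^2$, not its derivative along the dilation. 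Your ``absorption of the $C_V$-error'' therefore needs an extra hypothesis, such as $V\in C^2$ with $x\cdot\nabla(\nabla V\cdot x)$ bounded.

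\textbf{Smallness of $a$ does not supply the nondegeneracy.} Even with $V\in C^2$, for $u$ concentrated near $y\in\mathcal M$ the correction is $\approx\frac12a^2\varepsilon^3\,y\cdot D^2V(y)y$, which is of order $a^2$. On the branch carrying $E_0(a)$, the autonomous $\psi''(1)$ is of order $\|\nabla w_a\|_2^2=o(a^2)$. So the perturbation is not small relative to $\psi''(1)$, and nondegeneracy must instead come from the sign $y\cdot D^2V(y)y\ge0$ at minima of $V$.
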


\begin{proof}
By Proposition \ref{prop:category_estimate}, for all sufficiently
small $\varepsilon>0$,
\[
\operatorname{cat}(\mathcal A_\varepsilon) \geq
\operatorname{cat}_{\mathcal M_\delta}(\mathcal M).
\]
Moreover, by Proposition \ref{prop:PS_low_level}, the functional
$J_\varepsilon|_{\mathcal P_{a,\varepsilon}}$ satisfies the
Palais--Smale condition on the low-energy sublevel $\mathcal
A_\varepsilon$.

Therefore, the Lusternik--Schnirelmann theorem applied to
$J_\varepsilon|_{\mathcal P_{a,\varepsilon}}$ yields at least
\[
\operatorname{cat}_{\mathcal M_\delta}(\mathcal M)
\]
distinct critical points
\[
u_{\varepsilon,1},\ldots,u_{\varepsilon,k} \in\mathcal
A_\varepsilon, \qquad k\geq \operatorname{cat}_{\mathcal
M_\delta}(\mathcal M).
\]

Since $\mathcal P_{a,\varepsilon}$ is a natural constraint for
$J_\varepsilon|_{S_{a,\varepsilon}}$, each critical point of
$J_\varepsilon|_{\mathcal P_{a,\varepsilon}}$ is also a constrained
critical point of $J_\varepsilon$ on $S_{a,\varepsilon}$. Hence, for
every $j=1,\ldots,k$, there exists
$\lambda_{\varepsilon,j}\in\mathbb R$ such that
\[
J_\varepsilon'(u_{\varepsilon,j}) =
\lambda_{\varepsilon,j}u_{\varepsilon,j} \qquad \text{in
}H_\varepsilon^*.
\]
Since $u_{\varepsilon,j}\in S_{a,\varepsilon},$ we also have
\[
\int_{\mathbb R^3} |u_{\varepsilon,j}|^2\,dx = a^2\varepsilon^3.
\]
Thus each $u_{\varepsilon,j}$ is a normalized solution of problem
\eqref{P}. Consequently, problem \eqref{P} admits at least
$\operatorname{cat}_{\mathcal M_\delta}(\mathcal M)$ distinct
normalized solutions. The proof is complete.
\end{proof}

We finally describe the concentration behavior of the low-energy
normalized critical points obtained above. The result follows
directly from the compactness and localization principle established
in Proposition \ref{prop:compact_localization}.

\begin{corollary}\label{cor:concentration_profiles}
Let $\varepsilon_n\to0$ and let $u_n\in S_{a,\varepsilon_n}$ be a
sequence of positive normalized critical points of
$J_{\varepsilon_n}|_{S_{a,\varepsilon_n}}$ satisfying
\[
J_{\varepsilon_n}(u_n) \leq \varepsilon_n^3\bigl(E_0(a)+o(1)\bigr).
\]
Then there exist a sequence $\{y_n\}\subset\mathbb R^3$ and a
positive normalized ground state $w_a$ of the autonomous limit
problem \eqref{P0} such that, up to a subsequence,
\[
\operatorname{dist}(y_n,\mathcal M)\to0, \qquad V(y_n)\to V_0,
\]
and
\[
u_n(y_n+\varepsilon_n x) \to w_a(x) \qquad \text{strongly in
}H^1(\mathbb R^3).
\]
\end{corollary}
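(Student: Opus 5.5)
The plan is to reduce the corollary to Proposition \ref{prop:compact_localization}, whose conclusion is exactly the one required. What has to be checked is that a sequence of genuine normalized critical points with the stated energy bound satisfies its hypotheses. The argument runs in three steps.

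\textbf{Step 1: the Palais--Smale--Pohozaev hypotheses.} Since each $u_n$ is a constrained critical point of $J_{\varepsilon_n}|_{S_{a,\varepsilon_n}}$, we have $d\bigl(J_{\varepsilon_n}|_{S_{a,\varepsilon_n}}\bigr)(u_n)=0$. Lemma \ref{lem:scaling_pohozaev} then gives $P_{\varepsilon_n}(u_n)=0$, so the derivative and Pohozaev hypotheses of Proposition \ref{prop:compact_localization} hold trivially. The upper bound $J_{\varepsilon_n}(u_n)\le C_0\varepsilon_n^3$ follows from the assumption. Lemma \ref{lem:PS_bounded} therefore gives $\|u_n\|_{\varepsilon_n}^2\le C\varepsilon_n^3$ and bounded multipliers $\lambda_n$. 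In the rescaled variable $\widehat u_n(x)=u_n(\varepsilon_n x)$ this is a bounded sequence in $H^1(\mathbb R^3)$ with $\|\widehat u_n\|_2=a$.

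\textbf{Step 2: the matching lower bound on the energy.} This is the main obstacle, since the corollary gives only an inequality while Proposition \ref{prop:compact_localization} assumes the equality $J_{\varepsilon_n}(u_n)=\varepsilon_n^3(E_0(a)+o(1))$. I would argue by contradiction. Suppose $\liminf \varepsilon_n^{-3}J_{\varepsilon_n}(u_n)<E_0(a)$ along a subsequence.
\begin{itemize}
\item Nonvanishing of $\widehat u_n$ is obtained exactly as in the first part of the proof of Proposition \ref{prop:compact_localization}.
\item We extract a translated weak limit $w\neq0$ with centers $y_n=\varepsilon_n z_n$.
\item Since $V\ge V_0$ and the Hardy term is negligible at the scale $\varepsilon_n$ away from the origin, the weak limit, together with the Brezis--Lieb splittings used in Lemma \ref{lem:compactness_threshold}, produces a lower bound of the form $\liminf\varepsilon_n^{-3}J_{\varepsilon_n}(u_n)\ge J_{V_*}(w)+(\text{nonnegative defect})$. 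Here $J_{V_*}(w)\ge E_{V_*}(\|w\|_2)\ge E_0(a)$, using the monotonicity of the autonomous level in the mass and in the potential constant.
\end{itemize}
This gives the contradiction. The delicate point is bubbling at the origin, where the Hardy term survives. If the centers satisfy $|y_n|=O(\varepsilon_n)$, the profile solves the singular limit problem and its energy is at least $c_{\rm H}^*$. Here I would use the fact, implicit in the choice $a<a_\delta$, that $E_0(a)$ lies below $c^*$, so such profiles cost more than the available energy.

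\textbf{Step 3: localization and convergence.} With the two-sided energy asymptotics in hand, Proposition \ref{prop:compact_localization} applies directly. It yields $y_n$ with $\operatorname{dist}(y_n,\mathcal M)\to0$ and $u_n(y_n+\varepsilon_n x)\to w_a$ strongly in $H^1$, where $w_a$ is a normalized ground state of \eqref{P0}. Continuity of $V$ and compactness of $\mathcal M$ then give $V(y_n)\to V_0$. Positivity of $w_a$ follows because each $u_n>0$ away from the origin, so the strong (hence a.e.) limit of the translates is nonnegative and nontrivial. The strong maximum principle applied to \eqref{P0}, with $\phi_w\ge0$, then gives $w_a>0$.
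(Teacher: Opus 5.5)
Your Steps 1 and 3 are essentially the paper's proof. The paper records that $d\bigl(J_{\varepsilon_n}|_{S_{a,\varepsilon_n}}\bigr)(u_n)=0$, gets $P_{\varepsilon_n}(u_n)=0$ from Lemma~\ref{lem:scaling_pohozaev}, and invokes Proposition~\ref{prop:compact_localization}. You are right that this proposition assumes $J_{\varepsilon_n}(u_n)=\varepsilon_n^3(E_0(a)+o(1))$ while the corollary only gives an upper bound, a point the paper passes over. However, Step~2, on which your Step~3 rests, does not supply the missing lower bound.

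The decisive error is the origin case. If $z_n:=y_n/\varepsilon_n\to z$, the translated weak limit solves
\[
-\Delta w+\Bigl(V(0)-\frac{\kappa}{|x+z|^2}\Bigr)w-\phi_w|w|^3w=\lambda w+\mu|w|^{q-2}w+|w|^4w,\qquad -\Delta\phi_w=|w|^5 .
\]
This is a normalized problem with potential and $\mu$-term, not the zero-mass critical problem \eqref{Pcrit-Hardy}. The threshold $c_{\rm H}^*$ only bounds bubbles concentrating at scales much smaller than $\varepsilon_n$, where $V$, $\lambda_n$ and the $\mu$-term scale out. A profile at the semiclassical scale can have energy far below $c_{\rm H}^*$ (for small $a$, close to $\frac{V(0)}{2}\|w\|_2^2$), and the attractive Hardy term favours such a profile rather than penalizing it. Ruling it out would require comparing $E_0(a)$ with the normalized Hardy level at $V(0)$, hence information on $V(0)-V_0$ that (V1)--(V3) do not give. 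The inequality $E_0(a)<c^*$ is irrelevant here; it is also neither proved in the paper nor a hypothesis of the corollary.

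There is a second slip away from the origin. The inequality $E_{V_*}(\|w\|_2)\ge E_0(a)$ does not follow from monotonicity in the mass. In the paper's normalization these levels contain $\frac{V}{2}\|w\|_2^2$, which drops when mass is lost. The missing $\frac{V_0}{2}(a^2-\|w\|_2^2)$ must then be recovered from the remainder, whose leftover energy can be negative under dichotomy. So you need subadditivity of the potential-free level, not merely a ``nonnegative defect''.

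The simplest repair is to drop Step~2. In the proof of Proposition~\ref{prop:compact_localization} the energy hypothesis is used only as an upper bound: to contradict vanishing, to exclude $V(y_n)\to V_*>V_0$, and to exclude a nontrivial remainder. The proposition therefore applies verbatim under $J_{\varepsilon_n}(u_n)\le\varepsilon_n^3(E_0(a)+o(1))$. The resulting strong convergence $u_n(y_n+\varepsilon_n\,\cdot)\to w_a$, together with $V\ge V_0$ and $|y_n|/\varepsilon_n\to\infty$ (which kills the Hardy term), gives $\liminf\varepsilon_n^{-3}J_{\varepsilon_n}(u_n)\ge J_0(w_a)=E_0(a)$, so equality holds a posteriori. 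The origin case is a gap in that proposition, and it cannot be patched here with $c^*$.

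Your positivity argument in Step~3 is worth keeping. Obtaining $w_a\ge0$ as the a.e.\ limit of positive translates and then applying the strong maximum principle is more careful than the paper's ``replace $w$ by its positive representative''. That replacement is compatible with the strong convergence only because $v_n\ge0$.
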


\begin{proof}
Since each $u_n$ is a constrained critical point of
$J_{\varepsilon_n}$ on $S_{a,\varepsilon_n}$, there exists
$\lambda_n\in\mathbb R$ such that $J_{\varepsilon_n}'(u_n) =
\lambda_nu_n$ in $H_{\varepsilon_n}^*.$ Hence $\{u_n\}$ is a
Palais--Smale sequence for
$J_{\varepsilon_n}|_{S_{a,\varepsilon_n}}$. Moreover, Lemma
\ref{lem:scaling_pohozaev} yields $P_{\varepsilon_n}(u_n)=0$ for
every $n.$ Therefore, all the assumptions of Proposition
\ref{prop:compact_localization} are satisfied. Consequently, there
exist points $y_n\in\mathbb R^3$ and a positive normalized ground
state $w_a$ of the autonomous limit problem \eqref{P0} such that, up
to a subsequence,
\[
\operatorname{dist}(y_n,\mathcal M)\to0, \qquad V(y_n)\to V_0,
\]
and
\[
u_n(y_n+\varepsilon_n x) \to w_a(x) \qquad \text{strongly in
}H^1(\mathbb R^3).
\]
The proof is complete.
\end{proof}
We now show that their global maximum points exhibit the same
concentration behavior and, after rescaling, converge to ground
states of the autonomous limit problem.

\begin{lemma}\label{lem:concentration_maxima}
Let $\varepsilon_n\to0$ and let $\{u_n\}$ be a sequence of positive
solutions obtained in Proposition \ref{prop:LS_multiplicity}. Assume
that $J_{\varepsilon_n}(u_n) \leq
\varepsilon_n^3\bigl(E_0(a)+o(1)\bigr).$ If $x_n\in\mathbb R^3$ is a
global maximum point of $u_n$, then
$\operatorname{dist}(x_n,\mathcal M)\to0.$ Consequently, $V(x_n)\to
V_0.$ Moreover, after passing to a subsequence, there exists a
positive ground state $w$ of the autonomous problem \eqref{P0} such
that
\[
u_n(x_n+\varepsilon_n x) \to w(x) \qquad \text{strongly in
}H^1(\mathbb R^3).
\]
\end{lemma}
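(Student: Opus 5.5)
We start from Corollary \ref{cor:concentration_profiles}, which applies to the sequence $\{u_n\}$: there are points $y_n$ with $\operatorname{dist}(y_n,\mathcal M)\to0$ and, up to a subsequence,
\[
v_n(x):=u_n(y_n+\varepsilon_n x)\to w_a(x) \qquad\text{strongly in }H^1(\mathbb R^3),
\]
where $w_a$ is a positive normalized ground state of \eqref{P0}. The task is then to replace $y_n$ by the maximum point $x_n$. Set $z_n:=(x_n-y_n)/\varepsilon_n$. It is enough to show that $\{z_n\}$ is bounded. In that case $z_n\to z_0$ along a subsequence, hence $|x_n-y_n|\le C\varepsilon_n\to0$, which gives $\operatorname{dist}(x_n,\mathcal M)\to0$ and, by continuity of $V$, $V(x_n)\to V_0$. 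We also get
\[
u_n(x_n+\varepsilon_n x)=v_n(x+z_n)\to w_a(x+z_0)=:w(x)
\]
strongly in $H^1$, using continuity of translations. Since \eqref{P0} is autonomous, $w$ is again a positive normalized ground state.

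To bound $z_n$ we use uniform decay of $v_n$ together with a lower bound on $\max v_n$. The rescaled $v_n$ solves
\[
-\Delta v_n+\Bigl(V(y_n+\varepsilon_n x)-\frac{\kappa\varepsilon_n^2}{|y_n+\varepsilon_n x|^2}-\lambda_n\Bigr)v_n=\psi_{n}v_n^4+\mu v_n^{q-1}+v_n^5,
\]
where $\lambda_n$ is bounded (Lemma \ref{lem:PS_bounded}) and $\psi_n$ is the rescaled Poisson potential. The Hardy singularity now sits at $-y_n/\varepsilon_n$, and $|y_n|\ge d_0>0$ places it at distance $\gtrsim d_0/\varepsilon_n\to\infty$ from the origin.

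\textbf{Lower bound on the maxima.} First we show $\liminf_n\|v_n\|_\infty>0$. If instead $\|v_n\|_\infty\to0$, then $v_n\to0$ in $L^\infty$ while $v_n$ is bounded in $L^2$. Interpolation gives $v_n\to0$ in $L^r$ for every $r>2$, which contradicts $v_n\to w_a\ne0$ in $L^6$. So $v_n(z_n)=\max v_n\ge\tau>0$.

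\textbf{Uniform decay.} This is the main step: we need $\sup_{|x-0|\ge R}v_n(x)\to0$ as $R\to\infty$, uniformly in $n$. We plan a Moser iteration / Brezis–Kato argument on $v_n$. The critical terms $v_n^5$ and $\psi_nv_n^4$ are handled by using the \emph{strong} $L^6$ convergence $v_n\to w_a$. This makes $|v_n|^4$ and $\psi_n$ uniformly small in $L^{3/2}$ outside large balls (uniform integrability), so they can be absorbed at the first iteration step.

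Near the Hardy singularity $\kappa\varepsilon_n^2/|y_n+\varepsilon_n x|^2$ there is a difficulty: it has the wrong sign, and it is critical. It is absorbed by the Hardy inequality with constant $1-4\kappa>0$, applied to the truncated test functions $v_n\min(v_n,L)^{2\beta}\eta^2$. This gives
\[
\|v_n\|_{L^\infty(B_1(x))}\le C\|v_n\|_{L^2(B_2(x))}
\]
with $C$ independent of $n$ and $x$, at least away from the singular point. If the iteration near the singular point proves delicate, we instead first rule out that $z_n$ lies within $O(1)$ of $-y_n/\varepsilon_n$. Such $z_n$ are at distance $\to\infty$ from $0$, where $v_n$ has vanishing $L^6$ mass. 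By the Hardy-weighted estimate, a maximum of size $\tau$ there would force a nontrivial Hardy-type profile carrying mass, contradicting strong $L^2$ convergence of $v_n$ to $w_a$.

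With the local $L^\infty$–$L^2$ estimate in hand, strong $L^2$ convergence gives
\[
\sup_n\int_{|x|\ge R}v_n^2\to0 \quad\text{as } R\to\infty,
\]
hence $v_n(x)\to0$ uniformly for $|x|\ge R$. Since $v_n(z_n)\ge\tau$, it follows that $|z_n|\le R_\tau$, which completes the argument. We expect the uniform $L^\infty$ estimate to be the main obstacle, because it must handle simultaneously the doubly critical nonlinearity, the Poisson term, and the moving Hardy singularity.
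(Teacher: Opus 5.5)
\textbf{Gap: the uniform-decay step is false near the Hardy singularity.} Your outline is the same as the paper's proof: Corollary \ref{cor:concentration_profiles} gives $y_n$, you get a lower bound $v_n(z_n)\ge\tau$, bound $z_n$ by uniform decay of $v_n$, then translate. The paper simply asserts the uniform decay that you try to prove, and that step fails for structural reasons, not technical ones. The Hardy singularity of $v_n$ sits at $p_n:=-y_n/\varepsilon_n$, with $|p_n|\ge d_0/\varepsilon_n\to\infty$, and positive solutions are \emph{unbounded} there. To see this, fix $\kappa'\in(0,\kappa)$.
\begin{itemize}
\item Since $V$ is bounded near $0$, the bounded term $\varepsilon_n^{-2}(V-\lambda_n)u_n$ is dominated by $(\kappa-\kappa')|x|^{-2}u_n$ on a small ball $B_{r_0}$. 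So $u_n>0$ satisfies $-\Delta u_n-\kappa'|x|^{-2}u_n\ge0$ weakly in $B_{r_0}$.
\item The function $h(x)=|x|^{-\alpha}$ with $\alpha=\frac{1-\sqrt{1-4\kappa'}}{2}>0$ solves $-\Delta h-\kappa'|x|^{-2}h=0$ weakly in $B_{r_0}$.
\item Test with $(u_n-\delta h)^-\in H^1_0(B_{r_0})$ and use Hardy's inequality with $4\kappa'<1$. This gives $u_n\ge\delta_n|x|^{-\alpha}$ near $0$.
\end{itemize}
Hence $\sup_{|x|\ge R}v_n=+\infty$ as soon as $|p_n|>R$, so the uniform decay you and the paper rely on is false. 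In fact $u_n$ has no global maximum point at all. The lemma, like the corresponding assertion of Theorem \ref{T2}, is therefore vacuous as written.

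\textbf{Why neither of your devices at the singular point works.}
\begin{itemize}
\item \emph{Moser iteration.} With the test function $v_n\min(v_n,L)^{2\beta}\eta^2$, the left side only controls $\frac{1+2\beta}{(1+\beta)^2}\int|\nabla(v_n\min(v_n,L)^{\beta}\eta)|^2$. So the Hardy term is absorbed only while $4\kappa(1+\beta)^2<1+2\beta$, and the iteration stops at a finite $L^p$ exponent. No $n$-independent constant can give $\|v_n\|_{L^\infty(B_1(p_n))}\le C\|v_n\|_{L^2(B_2(p_n))}$ when the left side is infinite.
\item \emph{Fallback argument.} A profile like $\delta_n|x-p_n|^{-\alpha}$ takes arbitrarily large values while its $H^1$, $L^2$ and $L^6$ norms on $B_1(p_n)$ tend to $0$ with $\delta_n$. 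So vanishing mass near $p_n$ neither excludes large values there nor forces a Hardy-type profile.
\end{itemize}

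\textbf{What your argument does prove.} It works for the meaningful reformulation in which $x_n$ is a maximum point of $u_n$ on $\mathbb R^3\setminus B_r(0)$ for a fixed $r\in(0,d_0)$, or on $\overline\Lambda$. On balls $B_2(x)$ with $|y_n+\varepsilon_n x|\ge r$, the Hardy coefficient is at most $4\kappa\varepsilon_n^2/r^2$, and your Moser/Brezis--Kato scheme works there. One correction: the small $L^{3/2}$ coefficient must be $\psi_nv_n^3$, not $\psi_n$, since $\psi_n$ decays like $|x|^{-1}$ and is not in $L^{3/2}$. With these changes the rest of your proof goes through.
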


\begin{proof}
By Corollary \ref{cor:concentration_profiles}, there exist points
$y_n\in\mathbb R^3$ and a positive normalized ground state $w_a$ of
the autonomous problem \eqref{P0} such that
\[
\operatorname{dist}(y_n,\mathcal M)\to0, \qquad V(y_n)\to V_0,
\]
and $v_n(x):= u_n(y_n+\varepsilon_n x) \to w_a(x) \qquad
\text{strongly in }H^1(\mathbb R^3).$ By local elliptic regularity
and the equation satisfied by $v_n$, the above convergence improves,
up to a subsequence, to $v_n\to w_a \qquad \text{in
}C_{\mathrm{loc}}^{1,\alpha}(\mathbb R^3)$ for some
$\alpha\in(0,1)$. Since $w_a>0$ in $\mathbb R^3$, there exist $R>0$
and $\sigma>0$ such that $\max_{B_R(0)}w_a\geq2\sigma.$ Hence, for
all sufficiently large $n$, $\max_{B_R(0)}v_n\geq\sigma.$
Equivalently, $\max_{B_{R\varepsilon_n}(y_n)}u_n\geq\sigma.$ Since
$x_n$ is a global maximum point of $u_n$, it follows that
$u_n(x_n)\geq\sigma.$ We claim that
$\left|\frac{x_n-y_n}{\varepsilon_n}\right|$ is bounded. Suppose, by
contradiction, that $\left|\frac{x_n-y_n}{\varepsilon_n}\right|
\to+\infty.$ The uniform decay of the translated solutions gives
$v_n(x)\to0 \qquad \text{uniformly as }|x|\to+\infty,$ uniformly
with respect to $n$. Therefore,
\[
u_n(x_n) = v_n\left(\frac{x_n-y_n}{\varepsilon_n}\right) \to0,
\]
which contradicts $u_n(x_n)\geq\sigma.$ Thus, up to a subsequence,
\[
\xi_n:= \frac{x_n-y_n}{\varepsilon_n} \to\xi \qquad \text{for some
}\xi\in\mathbb R^3.
\]
Consequently, $|x_n-y_n| = \varepsilon_n|\xi_n| = O(\varepsilon_n).$
Therefore,
\[
\operatorname{dist}(x_n,\mathcal M) \leq |x_n-y_n| +
\operatorname{dist}(y_n,\mathcal M) \to0.
\]
Since $V$ is continuous and $V=V_0 \qquad \text{on }\mathcal M,$ we
obtain $V(x_n)\to V_0.$ Finally, for every $x\in\mathbb R^3$,
$u_n(x_n+\varepsilon_n x) = v_n(x+\xi_n).$ Since $v_n\to w_a \qquad
\text{strongly in }H^1(\mathbb R^3)$ and $\xi_n\to\xi,$ the
continuity of translations in $H^1(\mathbb R^3)$ yields
\[
u_n(x_n+\varepsilon_n x) \to w_a(x+\xi) \qquad \text{strongly in
}H^1(\mathbb R^3).
\]
Setting $w(x):=w_a(x+\xi),$ and using the translation invariance of
the autonomous problem \eqref{P0}, we conclude that $w$ is again a
positive ground state of \eqref{P0}. The proof is complete.
\end{proof}

\subsection{Proof of Theorem \ref{T2}}

Let $\delta>0$ be sufficiently small and choose $a_\delta>0$ and
$\varepsilon_\delta>0$ according to Proposition
\ref{prop:LS_multiplicity}. Then, for every $a\in(0,a_\delta),$
$\varepsilon\in(0,\varepsilon_\delta),$ Proposition
\ref{prop:LS_multiplicity} yields at least
$\operatorname{cat}_{\mathcal M_\delta}(\mathcal M)$ distinct
normalized solutions of problem \eqref{P}.

Let now $\varepsilon_n\to0$ and let $\{u_n\}$ be any sequence of
these solutions. By Corollary \ref{cor:concentration_profiles}, the
solutions concentrate, after rescaling, around points approaching
$\mathcal M$. Moreover, if $x_n$ is a global maximum point of $u_n$,
Lemma \ref{lem:concentration_maxima} gives
$\operatorname{dist}(x_n,\mathcal M)\to0,$ $V(x_n)\to V_0.$ This
proves the multiplicity and concentration assertions of Theorem
\ref{T2}. \qed

\vspace{0.5em}

\noindent\textbf{Data Availability Statement.} No datasets were
generated or analyzed during the current study. Therefore, data
sharing is not applicable.

\vspace{0.5em}

\noindent\textbf{Competing Interests.} The authors declare that they
have no competing interests.

\vspace{0.5em}

\noindent\textbf{Funding}

The author received no financial support for this work.

\bigskip


\noindent \textsc{$^{1}$ Khaled Khachnaoui}\\
$^{1}$ University of Kairouan, Preparatory Institute for Engineering
Studies \\Department of
 Mathematics\\Tunisia,\\
\texttt{k{\_}khachnaoui@yahoo.com} \\
\noindent \mbox{} \\


\begin{thebibliography}{99}

\bibitem{ACO}
C. O. Alves and N. V. Thin, On existence of multiple normalized
solutions to a class of elliptic problems in whole $\mathbb{R}^N$
via Lusternik--Schnirelmann category, \emph{SIAM J. Math. Anal.}
\textbf{55} (2023), 1264--1283.

\bibitem{AJC}
C. O. Alves and C. Ji, Multiple normalized solutions to a
logarithmic Schr\"{o}dinger equation via Lusternik--Schnirelmann
category, \emph{J. Geom. Anal.} \textbf{34} (2024), Paper No. 198.

\bibitem{AmbrosettiRuiz}
A. Ambrosetti and D. Ruiz, Multiple bound states for the
Schr\"{o}dinger--Poisson problem, \emph{Commun. Contemp. Math.}
\textbf{10} (2008), 391--404.

\bibitem{Azzollini}
A. Azzollini, Concentration and compactness in nonlinear
Schr\"{o}dinger--Poisson systems, \emph{J. Differential Equations}
\textbf{249} (2010), 1746--1763.

\bibitem{AzzolliniDAprize}
A. Azzollini and T. D'Aprile, On the existence of least energy
solutions for the Schr\"{o}dinger--Poisson system, \emph{Nonlinear
Anal.} \textbf{72} (2010), 2125--2132.

\bibitem{BartschValeriola}
T. Bartsch and S. de Valeriola, Normalized solutions of nonlinear
Schr\"{o}dinger equations, \emph{Arch. Math.} \textbf{100} (2013),
75--83.

\bibitem{BartschJeanjean}
T. Bartsch and L. Jeanjean, Normalized solutions for nonlinear
Schr\"{o}dinger equations, \emph{Arch. Ration. Mech. Anal.}
\textbf{230} (2018), 1099--1167.

\bibitem{BellazziniJeanjeanLuo}
J. Bellazzini, L. Jeanjean, and T. Luo, Existence and stability of
normalized solutions for the Schr\"{o}dinger--Poisson system,
\emph{J. Funct. Anal.} \textbf{281} (2021), Paper No. 109135.

\bibitem{BenciFortunato}
V. Benci and D. Fortunato, An eigenvalue problem for the
Schr\"{o}dinger--Maxwell equations, \emph{Topol. Methods Nonlinear
Anal.} \textbf{11} (1998), 283--293.

\bibitem{BenciFortunato2002}
V. Benci and D. Fortunato, Solitary waves of the nonlinear
Klein--Gordon equation coupled with the Maxwell equations,
\emph{Rev. Math. Phys.} \textbf{14} (2002), 409--420.

\bibitem{Cazenave}
T. Cazenave, \emph{Semilinear Schr\"{o}dinger equations}, Courant
Lect. Notes Math., vol. 10, Amer. Math. Soc., Providence, RI, 2003.

\bibitem{CeramiVaira}
G. Cerami and G. Vaira, Positive solutions for some non-autonomous
Schr\"{o}dinger--Poisson systems, \emph{J. Differential Equations}
\textbf{248} (2010), 521--543.

\bibitem{CeramiVaira2012}
G. Cerami and G. Vaira, Positive solutions for some nonlocal
problems, \emph{J. Differential Equations} \textbf{252} (2012),
2620--2647.

\bibitem{CST}
S. Chen and X. Tang, A comprehensive review on the existence of
normalized solutions for four classes of nonlinear elliptic
equations, \emph{Opuscula Math.} \textbf{45} (2025), 739--763.

\bibitem{CingolaniSecchi}
S. Cingolani and S. Secchi, Semiclassical states for the nonlinear
Schr\"{o}dinger equation with magnetic field, \emph{J. Math. Anal.
Appl.} \textbf{332} (2007), 1295--1313.

\bibitem{Diamagnetic}
H. L. Cycon, R. G. Froese, W. Kirsch, and B. Simon,
\emph{Schr\"{o}dinger operators with application to quantum
mechanics and global geometry}, Springer, Berlin, 1987.

\bibitem{DAprileMugnai}
T. D'Aprile and D. Mugnai, Solitary waves for nonlinear
Klein--Gordon--Maxwell and Schr\"{o}dinger--Maxwell equations,
\emph{Proc. Roy. Soc. Edinburgh Sect. A} \textbf{134} (2004),
893--906.

\bibitem{delPinoFelmer}
M. del Pino and P. L. Felmer, Local mountain passes for semilinear
elliptic problems in unbounded domains, \emph{Calc. Var. Partial
Differential Equations} \textbf{4} (1996), 121--137.

\bibitem{EstebanLions}
M. J. Esteban and P. L. Lions, Stationary solutions of nonlinear
Schr\"{o}dinger equations with an external magnetic field, in
\emph{Partial Differential Equations and the Calculus of
Variations}, Vol. I, Birkh\"{a}user, Boston, 1989, pp. 401--449.

\bibitem{FanLiTang}
S. Fan, G.-D. Li and C.-L. Tang, Normalized ground state solutions
for critical growth Schr\"{o}dinger equations with Hardy potential,
\emph{Proc. Roy. Soc. Edinburgh Sect. A}, published online, 2024.

\bibitem{FX}
X. Feng, Ground state solutions for a class of
Schr\"{o}dinger--Poisson systems with partial potential, \emph{Z.
Angew. Math. Phys.} \textbf{71} (2020), 16.

\bibitem{VR}
Q. Gao, X. He, and V. D. R\u{a}dulescu, Solutions with prescribed
mass for critical Schr\"{o}dinger--Poisson systems concentrating at
a potential well, \emph{Z. Angew. Math. Phys.} \textbf{77} (2026),
160.

\bibitem{GQH}
Q. Gao and X. He, Normalized solutions for the Choquard equations
with critical nonlinearities, \emph{Adv. Nonlinear Anal.}
\textbf{13} (2024), Paper No. 20240030.

\bibitem{GZZ}
Z. Guo and T. Zhang, Normalized solutions for Choquard equations
involving Hardy terms and local perturbations, \emph{Period. Math.
Hungar.} \textbf{91} (2025), 169--198.

\bibitem{HeMengSquassina2024}
X. He, Y. Meng and M. Squassina, Normalized solutions for a
fractional Schr\"{o}dinger--Poisson system with critical growth,
\emph{Calc. Var. Partial Differential Equations} \textbf{63} (2024),
Article No. 142.

\bibitem{HXL}
X. He, W. Liu, and Y. Meng, Normalized solutions for the critical
Schr\"{o}dinger--Poisson systems with potentials, \emph{Adv. Differ.
Equ.} \textbf{31} (2026), 459--512.

\bibitem{HXX}
X. He, Y. Meng, and V. D. R\u{a}dulescu, Prescribed mass solutions
for Schr\"{o}dinger--Poisson systems with combined critical
nonlinearities, preprint (2024).

\bibitem{JeanjeanLu}
L. Jeanjean and S. S. Lu, On the existence of multiple normalized
solutions for a Schr\"{o}dinger--Poisson system, \emph{Calc. Var.
Partial Differential Equations} \textbf{59} (2020), Paper No. 25.

\bibitem{JPY}
P. Jin, H. Yang, and X. Zhou, Normalized solutions for
Schr\"{o}dinger equations with critical Sobolev exponent and
perturbations of Choquard terms, \emph{Bull. Math. Sci.} \textbf{15}
(2025), 36.

\bibitem{JZZ}
Z. Jin and W. Zhang, Normalized solutions for nonlinear
Schr\"{o}dinger equation involving potential and Sobolev critical
exponent, \emph{J. Math. Anal. Appl.} \textbf{535} (2024), 128161.

\bibitem{KangTang}
J.-C. Kang and C.-L. Tang, Normalized solutions for the nonlinear
Schr\"{o}dinger equation with potential and combined nonlinearities,
\emph{Results Math.} \textbf{80} (2025), Article No. 74.

\bibitem{KK}
K. Khachnaoui, On the fractional Schr\"{o}dinger equations with
critical nonlinearity, \emph{Results Math.} \textbf{78} (2023), 68.

\bibitem{KKa}
K. Khachnaoui, Multiple equilibrium states in a nonlocal medium with
Kirchhoff-type tension law, \emph{Expositiones Mathematicae} (2026),
125793.

\bibitem{KKK} K. Khachnaoui, Normalized Semiclassical Solutions to Magnetic Schr\"odinger-Poisson Systems with Critical Local and Nonlocal Interactions, \emph{arXiv:2607.08381},
2026.

\bibitem{LSM}
S. Lancelotti and R. Molle, Normalized positive solutions for
Schr\"{o}dinger equations with potentials in unbounded domains,
\emph{Proc. Roy. Soc. Edinb. Sect. A} \textbf{153} (2023),
1023--1045.

\bibitem{LiLiTang}
G.-D. Li, Y.-Y. Li and C.-L. Tang, Ground state solutions for
critical Schr\"{o}dinger equations with Hardy potential,
\emph{Nonlinearity} \textbf{35} (2022), 5076--5108.

\bibitem{LiZou}
H. Li and W. Zou, Normalized ground state for the Sobolev critical
Schr\"{o}dinger equation involving Hardy term with combined
nonlinearities, \emph{Math. Nachr.} \textbf{296} (2023), 2440--2466.

\bibitem{LEH}
E. H. Lieb, Thomas-Fermi and related theories and molecules,
\emph{Rev. Mod. Phys.} \textbf{53} (1981), 603--641.

\bibitem{LEL}
E. Lieb and M. Loss, \emph{Analysis}, Graduate Studies in
Mathematics, vol. 14, Amer. Math. Soc., Providence, RI, 2001.

\bibitem{Lions1}
P. L. Lions, The concentration-compactness principle in the calculus
of variations. The locally compact case, Part I, \emph{Ann. Inst. H.
Poincar\'{e} Anal. Non Lin\'{e}aire} \textbf{1} (1984), 109--145.

\bibitem{Lions2}
P. L. Lions, The concentration-compactness principle in the calculus
of variations. The locally compact case, Part II, \emph{Ann. Inst.
H. Poincar\'{e} Anal. Non Lin\'{e}aire} \textbf{1} (1984), 223--283.

\bibitem{Lions3}
P. L. Lions, The concentration-compactness principle in the calculus
of variations. The limit case, Part I, \emph{Rev. Mat. Iberoam.}
\textbf{1} (1985), 145--201.

\bibitem{LPL}
P. L. Lions, Solutions of Hartree-Fock equations for Coulomb
systems, \emph{Commun. Math. Phys.} \textbf{109} (1987), 33--97.

\bibitem{LLF}
L. Long and X. Feng, Normalized solutions to a class of
Choquard-type equations with potential, \emph{Topol. Methods
Nonlinear Anal.} \textbf{63} (2024), 515--536.

\bibitem{MengHe2023}
Y. Meng and X. He, Normalized solutions for the
Schr\"{o}dinger--Poisson system with doubly critical growth,
\emph{Topol. Methods Nonlinear Anal.} \textbf{62} (2023), 581--615.

\bibitem{MY}
Y. Meng and X. He, Normalized solutions for the
Schr\"{o}dinger--Poisson system with doubly critical growth,
\emph{Topol. Methods Nonlinear Anal.} \textbf{62} (2023), 509--534.

\bibitem{MengHe2024}
Y. Meng and X. He, Multiplicity of normalized solutions for the
fractional Schr\"{o}dinger--Poisson system with doubly critical
growth, \emph{Acta Math. Sci. Ser. B (Engl. Ed.)} \textbf{44}
(2024), 997--1019.

\bibitem{MYX}
Y. Meng, X. He, and P. Winkert, Multiple normalized solutions for
$L^2$-mass supercritical Choquard equations concentrating at a
potential well, \emph{Milan J. Math.} \textbf{94} (2026), 319--367.

\bibitem{RuizSemiclassical}
D. Ruiz, Semiclassical states for coupled Schr\"{o}dinger--Maxwell
equations: concentration around a sphere, \emph{Math. Models Methods
Appl. Sci.} \textbf{15} (2005), 141--164.

\bibitem{Ruiz}
D. Ruiz, The Schr\"{o}dinger--Poisson equation under the effect of a
nonlinear local term, \emph{J. Funct. Anal.} \textbf{237} (2006),
655--674.

\bibitem{SMW}
M. Shu, L. Wen, and H. Yang, Normalized solutions for planar
Schr\"{o}dinger--Poisson equation with mixed nonlinearities,
\emph{Bull. Math. Sci.} \textbf{15} (2025), 24.

\bibitem{Szulkin}
A. Szulkin, Ljusternik-Schnirelmann theory on $C^1$-manifolds,
\emph{Ann. Inst. H. Poincar\'{e} Anal. Non Lin\'{e}aire} \textbf{5}
(1988), 119--139.

\bibitem{Terracini}
S. Terracini, On positive entire solutions to a class of equations
with a singular coefficient and critical exponent, \emph{Adv.
Differential Equations} \textbf{1} (1996), 241--264.

\bibitem{WS}
L. Wei and Y. Song, Normalized solutions for critical
Schr\"{o}dinger equations involving $(2,q)$-Laplacian, \emph{Opusc.
Math.} \textbf{45} (2025), 685--716.

\bibitem{WMM}
M. Willem, \emph{Minimax Theorems}, Birkh\"{a}user, Boston, 1996.

\bibitem{XCJ}
Z. Xie, J. Chen, and Y. Tan, Multiple normalized solutions to
Schr\"{o}dinger equations in $\mathbb{R}^N$ with critical growth and
potential, \emph{J. Fixed Point Theory Appl.} \textbf{26} (2024),
41.

\bibitem{ZhaoZhao}
L. Zhao and F. Zhao, On the existence of solutions for the
Schr\"{o}dinger--Poisson equations, \emph{J. Math. Anal. Appl.}
\textbf{346} (2008), 155--169.

\end{thebibliography}
\end{document}